\newcommand*{\mailto}[1]{\href{mailto:#1}{\nolinkurl{#1}}}
   \def\sH{{\mathfrak H}}
      \def\dC{{\mathbb C}}
   \def\dN{{\mathbb N}}   
      \def\dR{{\mathbb R}}
\def\cA{{\mathcal A}}   \def\cB{{\mathcal B}}   
\def\cG{{\mathcal G}}   \def\cH{{\mathcal H}}   
      \def\cL{{\mathcal L}}
\def\R{\mathbb{R}}
\def\C{\mathbb{C}}
\def\ran{{\text{\rm ran\,}}}
\def\dom{{\text{\rm dom\,}}}
\def\min{{\rm min\,}}
\def\max{{\rm max\,}}
\DeclareMathOperator{\Real}{Re}
\DeclareMathOperator*{\dist}{dist}
\newtheorem{theorem}{Theorem}[section]
\newtheorem*{thm*}{Theorem}
\newtheorem{proposition}[theorem]{Proposition}
\newtheorem{corollary}[theorem]{Corollary}
\newtheorem{lemma}[theorem]{Lemma}
\theoremstyle{definition}
\newtheorem{definition}[theorem]{Definition}
\newtheorem{example}[theorem]{Example}
\newtheorem{assumption}[theorem]{Assumption}
\newtheorem{remark}[theorem]{Remark}
\numberwithin{equation}{section}
\begin{document}
\title[Spectral Properties of 1D Fractional Laplacians]{On Spectral Properties of Restricted Fractional Laplacians with Self-adjoint Boundary Conditions on a Finite Interval}

\author[J.\ Behrndt]{Jussi Behrndt \orcidlink{0000-0002-3442-6777}}
\address{Jussi Behrndt, Institut f\"ur Angewandte Mathematik, Technische Universit\"at
Graz,\\ Steyrergasse 30, 8010 Graz, Austria}
\email{behrndt@tugraz.at}
\urladdr{www.math.tugraz.at/$\sim$behrndt/}

\author[M.\ Holzmann]{Markus Holzmann \orcidlink{0000-0001-8071-481X)}}
\address{Markus Holzmann, Institut f\"ur Angewandte Mathematik, Technische Universit\"at
Graz,\\ Steyrergasse 30, 8010 Graz, Austria}
\email{holzmann@math.tugraz.at}
\urladdr{www.math.tugraz.at/$\sim$holzmann/}

\author[D.\ Mugnolo]{Delio Mugnolo \orcidlink{0000-0001-9405-0874}}
\address{Delio Mugnolo, Lehrgebiet Analysis, Fakultät Mathematik und Informatik, Fern\-Universität in Hagen, D-58084 Hagen, Germany}
\email{delio.mugnolo@fernuni-hagen.de}
\urladdr{https://www.fernuni-hagen.de/analysis/team/delio.mugnolo.shtml}



\begin{abstract}
We describe all self-adjoint realizations of the restricted fractional Laplacian $(-\Delta)^a$ with power $a \in (\frac{1}{2}, 1)$ on a bounded interval by imposing boundary conditions on the functions in the domain of  
a maximal realization; such conditions relate suitable weighted Dirichlet and Neumann traces. This is done  in a systematic way by using the abstract
concept of boundary triplets and their Weyl functions from extension and spectral theory of symmetric and self-adjoint operators in Hilbert spaces.
Our treatment
follows closely the well-known one for classical Laplacians on intervals and it shows that all self-adjoint realizations have purely discrete spectrum and are semibounded from below. To demonstrate the method, we focus on three self-adjoint realizations of the restricted fractional Laplacian: the Friedrichs extension, corresponding to Dirichlet-type boundary conditions, the Krein--von Neumann extension, and a Neumann-type realization. Notably, the 
Neumann-type realization exhibits a simple negative eigenvalue, thus it is not larger than the Krein--von Neumann extension.
\end{abstract}

\maketitle

\section{Introduction} 

Fractional Laplace operators, which were introduced already in~\cite{Boc49, Rie39}, appear nowadays in several fields, including relativistic \cite{MazVer04} and non-relativistic \cite{BogByc99} quantum mechanics (see  \cite{BogMer24, FraLieSei08} and the references therein), anomalous diffusion~\cite{MeeSik19}, stochastic analysis~\cite{SonVon03}, turbulence~\cite{Bak08}, and optics~\cite{BarBerWie08}. For physical systems supported on the whole space the associated operator is extensively studied, both in modeling and analysis. Owing to the spectral theorem and the subordination theory \cite{MarSan01}, in this case there is no ambiguity when it comes to defining fractional operators: no less than ten different
but equivalent definitions of the fractional Laplacian on $\R^d$ are known \cite{Kwa17}. Our aim in this paper is to study the less clear case of bounded domains from an extension theoretic point of view. More precisely, we consider a specific variant of the fractional Laplacian, which we call, with a slight abuse of terminology, \textit{restricted} fractional Laplacian: we are going to parametrize all its self-adjoint realizations by imposing suitable boundary conditions. This natural question is unanswered even in the one-dimensional case of intervals, which is considered in the present paper.

Throughout this paper, let $a\in (\frac{1}{2},1)$ be fixed. The fractional Laplacian $(-\Delta)^a$ on $\mathbb{R}$ is the strongly singular integral operator that acts on any $f$ belonging to the Sobolev space $H^{2a}(\mathbb{R})$ of fractional order $2a$ as
\begin{equation} \label{def_fract_Laplace_R}
  (-\Delta)^a f(x) = c_{a} \lim_{\varepsilon \searrow 0} \int_{|x-y| > \varepsilon} \frac{f(x)-f(y)}{|x-y|^{1+2a}} d y, \quad x \in \mathbb{R},
\end{equation}
where $c_a>0$ is the constant in~\eqref{def_c_a}. 
Equivalently, this operator can be described with the Fourier transform $\mathcal{F}$ in $L^2(\mathbb{R})$ via $\mathcal{F} ((-\Delta)^a f)(\xi) = \xi^{2a} \mathcal{F} f(\xi)$, or as the fractional power of the free Laplacian defined on $H^2(\mathbb{R})$ via the spectral theorem for self-adjoint operators. In particular, the representation via the Fourier transform shows that $(-\Delta)^a$ can be viewed as a bounded operator from $H^{2a}(\mathbb{R})$ to $L^2(\mathbb{R})$ and thus, by duality and self-adjointness, it admits a bounded extension mapping from $L^2(\mathbb{R})$ to $H^{-2a}(\mathbb{R})$.

The  main objective of this paper is to parametrize and to study the spectral properties of the self-adjoint realizations of the 
fractional Laplacian $(-\Delta)^{a}$ on a bounded interval $(\alpha,\beta)\subset\mathbb R$ by using boundary triplets and 
Weyl functions -- an efficient tool in abstract extension theory of symmetric operators and spectral analysis
of their self-adjoint extensions; cf. Section~\ref{sec2}. We emphasize that on a bounded interval there are different, not mutually equivalent versions of fractional Laplacians. Inspired by the work of Grubb (see, e.g., \cite[Section~6.1]{G16}),
we will work with a variant of $(-\Delta)^{a}$, which we call \textit{restricted fractional Laplacian}, and which acts on a function $f \in L^2(\alpha, \beta)$ by first extending it by zero to an element $\mathfrak{e} f \in L^2(\mathbb{R})$, then by applying the fractional Laplacian $(-\Delta)^a$ on $\mathbb{R}$ to $\mathfrak{e} f$, and finally by restricting $(-\Delta)^a \mathfrak{e} f \in H^{-2a}(\mathbb{R})$ to a distribution $\mathfrak{r} (-\Delta)^a \mathfrak{e} f \in \mathcal{D}'(\alpha, \beta)$ on $(\alpha, \beta)$. The maximal realization of the restricted fractional Laplacian in $L^2(\alpha, \beta)$ is then defined for all those $f \in L^2(\alpha, \beta)$ such that $\mathfrak{r} (-\Delta)^a \mathfrak{e} f \in L^2(\alpha, \beta)$. Note that by construction, the action of the restricted fractional Laplacian can also be described as the strongly singular integral in~\eqref{def_fract_Laplace_R}.
Let us briefly remind the reader about other common types of fractional Laplacians on domains.
The simplest one -- the \textit{spectral fractional Laplacian} -- is also the least appropriate for our purposes, as it is defined by taking fractional powers (via the spectral theorem) of a non-negative realization of the Laplacian; in this way, boundary conditions are implicitly already prescribed, see \cite{G16} and the references therein. Another way of defining fractional powers that is tailored for a Neumann-like realization with nonlocal boundary conditions and has received much attention in the last years is described in~\cite{DROV17}; again, as it is related to fixed boundary conditions, it is not suitable for the purposes of extension theory.
Finally, the so-called \textit{regional fractional Laplacian} arises as the $L^2$-gradient of the Slobodeckii-seminorm. This realization is natural from the viewpoint of stochastic analysis, as it generates censored stable processes. Possibly for this reason, its Dirichlet, Neumann, and Robin-type extensions have been  studied (see e.g.~\cite{BBC03,GS19,G06,GM06,W15}), but to the best of our knowledge no systematic characterization of all self-adjoint realizations and their spectral properties has been obtained so far. 
We emphasize that the  Dirichlet realizations of the restricted and the regional fractional Laplacian only differ by a positive potential; this was observed already in~\cite[Section~2]{BBC03}, see also \cite[Section~6.3]{G16} and \cite{GS19,W15}.

To make this paper easily readable we first familiarize the reader with the general abstract notions 
of boundary triplets and Weyl functions in Section~\ref{sec2}, and 
illustrate in parallel these concepts for the differential expression $-\Delta$ on a finite interval $(\alpha,\beta)$, which is 
a particularly simple example and well known to operator theory experts. However, we believe that this 
special case already displays many advantages of boundary triplets and Weyl functions and, at the same time, provides 
an easy access to the abstract theory. In particular, using the Dirichlet and Neumann trace operators we parametrize 
the self-adjoint realizations of $-\Delta$ in $L^2(\alpha,\beta)$ with 
the help of self-adjoint relations $\Theta\subset \dC^2\times\dC^2$ or pairs of $2\times 2$-matrices $\cA, \cB$ 
satisfying a symmetry and a maximality condition. The Weyl function in this case  
is the ($2\times 2$-matrix valued) Dirichlet-to-Neumann map, which contains the spectral data of the self-adjoint realizations.
Furthermore, we briefly discuss semibounded self-adjoint extensions and special attention is paid to 
the Friedrichs and Krein--von Neumann extension (in the nonnegative case).
It is noteworthy that the boundary conditions of the Friedrichs extension can
be identified via the limiting behavior of the Weyl function at $-\infty$; cf. \cite{BHS20,DM91,DM95} and Proposition~\ref{sf} .

After the preparations and motivations in Section~\ref{sec2} we turn to the main objective of this paper.  
To fully describe the domains of the self-adjoint realizations of the restricted fractional Laplacian
$(-\Delta)^{a}$ in $L^2(\alpha,\beta)$ suitable function spaces are necessary. While for $a=1$ the $L^2$-based Sobolev space $W^{2,2}(\alpha,\beta)$ of twice weakly differentiable functions
is the natural choice, for the restricted fractional Laplacian with $a<1$ the so-called H\"ormander transmission spaces turn out to be useful; cf. Section~\ref{sec3}. These spaces, originally introduced by H\"ormander in \cite{H65}, played an important role in the recent publications 
\cite{G14, G15, G16, G18-1, G18} by Grubb, where properties of solutions of equations involving the restricted fractional Laplacian were studied. The works by
Grubb also serve as an inspiration and source for our studies. 
The definition of the H\"ormander transmission spaces $H^{\mu(s)}([\alpha, \beta])$ and  some of their properties that are useful 
in the present context are summarized in Section~\ref{section_Hoermander_transmission_spaces}.
For $a \in (\frac{1}{2}, 1)$ the minimal restricted fractional Laplacian $(-\Delta)_\min^{a}$, the maximal restricted fractional Laplacian $(-\Delta)_\max^{a}$, and the 
Dirichlet realization $(-\Delta)^a_{\textup{D}}$ of the restricted fractional Laplacian 
are then defined in Section~\ref{section_minimal_maximal_operator}. It is noteworthy that for functions in the domain of $(-\Delta)^a_\max$ weighted Dirichlet and Neumann traces that appear in the boundary conditions~\eqref{boundary_conditions_intro} below can be defined. Regularity of solutions of elliptic problems involving $(-\Delta)^a_\textup{D}$ has been studied in \cite{G14, G15, ROS14-1, ROS14-2}. Moreover, while the spectrum of $(-\Delta)^a_\textup{D}$ is not easy to describe, it has already been studied in detail, see, e.g., the survey~\cite{SV14} and in particular~\cite[Theorem~1.1]{SV14}, where the ground state energy of the restricted fractional Dirichlet Laplacian on a bounded interval is compared with that of the spectral fractional Dirichlet Laplacian (see also \cite{CheSon05}). Simplicity of all eigenvalues was shown in~\cite{K12}, while spectral bounds and Weyl-type asymptotics were obtained in~\cite{BluGet59, D12}.

Based on the previous considerations 
by Grubb and a Green's identity from \cite{G18-1}  we then obtain a boundary triplet for 
the maximal restricted fractional Laplacian $(-\Delta)_\max^{a}$ in Section~\ref{btsec}.
This leads directly to a description of all self-adjoint fractional Laplacians as the restriction of $(-\Delta)^a_\max$ to those $f$ that satisfy the boundary conditions
\begin{equation} \label{boundary_conditions_intro}
  \Gamma(a) \mathcal{B}  \begin{pmatrix} (t^{1-a}f)(\alpha) \\ (t^{1-a} f)(\beta) \end{pmatrix}
  = \Gamma(a+1) \mathcal{A}  \begin{pmatrix} (t^{1-a}f)'(\alpha) \\ -(t^{1-a} f)'(\beta) \end{pmatrix}, 
\end{equation}
where $\Gamma$ is Euler's gamma function, $t(x) = \min \{x-\alpha, \beta-x\}$, and $\mathcal{A}, \mathcal{B}\in\mathbb{C}^{2 \times 2}$ are such that $\cA\cB^*= \cB\cA^*$
and $(\cA \, | \, \cB) \in \mathbb{C}^{2 \times 4}$ has rank  2; cf. Proposition~\ref{proposition_self_adjoint}. This characterization is also the reason for the restriction $a \in (\frac{1}{2}, 1)$, as for $a \leq \frac{1}{2}$ the weighted Neumann trace of functions in the domain of $(-\Delta)^a_\max$ can not be defined, and for $a > 1$ it is expected that the traces appearing in~\eqref{boundary_conditions_intro} are not sufficient to describe \textit{all} self-adjoint realizations, as for this purpose, e.g., also boundary conditions involving the evaluation of the weighted second derivative on the boundary is necessary. Next, using
results from \cite{D12} we compute the 
corresponding Weyl function $M(\lambda)$ at $\lambda=0$ and relying on abstract properties of the Weyl function we obtain 
the form of $M(\lambda)$ for all $\lambda$, and also identify the Friedrichs extension via the 
limiting behavior of $M$ for $\lambda\rightarrow -\infty$; here we use typical boundary triplet techniques from \cite{BHS20,DM91,DM95} and also rely on a result from \cite{BLLR18}.
Furthermore, we describe the Krein--von Neumann realization of the restricted fractional Laplacian, as well as the 
nonnegative and semibounded self-adjoint extensions. An interesting feature appears for the Neumann realization: In contrast to the 
classical case $a=1$, the Neumann realization of the restricted fractional Laplacian for $a \in (\frac{1}{2}, 1)$
possesses a simple negative eigenvalue. In particular, it is \textit{not larger} than the Krein--von Neumann realization and, unlike for $a=1$, it is \textit{not} associated with a (sub-)Markovian stochastic process. 
We wish to point out that formally 
the observations in Section~\ref{sec2} for the (ordinary) Laplacian $-\Delta$ can be viewed as limiting results of the findings in Section~\ref{section_fractional} for $a\rightarrow 1$.

\subsection*{Notations}

For a linear operator $A$ from a Hilbert space $\cG$ to $\cH$ the domain, range, and kernel are denoted by $\dom A, \ran A$, and $\ker A$, respectively. The space of all bounded linear operators from $\cG$ to $\cH$ is $\cL(\cG, \cH)$; if $\cG = \cH$, then we simply write $\cL(\cG) := \cL(\cG, \cG)$. If $A$ is a closed operator in $\cH$, then its resolvent set, spectrum, point spectrum, and continuous spectrum are denoted by $\rho(A), \sigma(A)$, $\sigma_\textup{p}(A)$, and $\sigma_\textup{c}(A)$, respectively. For a self-adjoint operator $A$ we use the symbol $\sigma_\textup{d}(A)$ for the discrete and $\sigma_\textup{ess}(A)$ for the essential spectrum.

\section{Boundary triplets and Weyl functions} \label{sec2}

The aim of this section is to recall the notion of boundary triplets and Weyl functions from extension theory of symmetric and self-adjoint operators.
We refer the reader to the textbook \cite{BHS20} and the review paper \cite{BGP08} for a comprehensive treatment of the subject; to \cite{B76,DM91,DM95,K75} for the origins of these concepts and to the textbooks \cite{G09,GG91,S12}.
To make this paper easily readable and to prepare the reader for the fractional order case in Sections~\ref{sec3} and \ref{section_fractional} we 
illustrate the abstract concepts in parallel for $-\Delta$ on a finite interval $(\alpha,\beta)$ in Examples~\ref{superklar}, \ref{exi}, \ref{example_KvN},
and \ref{example_ordering}.

\subsection{Boundary triplets and self-adjoint extensions}
Throughout this section $S$ is assumed to be a closed symmetric operator in a separable Hilbert space $\sH$ with dense domain $\dom S$.
It follows that the adjoint operator $S^*$ is well defined, closed, and an extension of $S$, that is, $S\subset S^*$.

\begin{definition}\label{yes}
A {\it boundary triplet} $\{\cG,\Upsilon_0,\Upsilon_1\}$ for  $S^*$ consists of a Hilbert space $\cG$ 
and two linear mappings $\Upsilon_0,\Upsilon_1:\dom S^*\rightarrow\cG$ such that the following properties hold:
\begin{itemize}
 \item [{\rm (i)}] the abstract Green identity 
\begin{equation*}
 (S^*f,g)-(f,S^*g)=(\Upsilon_1 f,\Upsilon_0 g)-(\Upsilon_0 f,\Upsilon_1 g)
\end{equation*}
is valid for all $f,g\in\dom S^*$,
 \item [{\rm (ii)}] the mapping $(\Upsilon_0,\Upsilon_1)^\top:\dom S^*\rightarrow\cG\times\cG$ is onto.
\end{itemize}
\end{definition}

A boundary triplet for $S^*$ exists if and only if the defect numbers $n_\pm(S)=\dim\ker(S^*\mp i)$ of $S$ coincide, and in this case one has 
$n_\pm(S)=\dim\cG\leq \infty$.
Note also that a boundary triplet is not unique (unless $n_\pm(S)=0$). In the following let us assume that $\{\cG,\Upsilon_0,\Upsilon_1\}$ is a boundary triplet 
for $S^*$. Then it follows that 
\begin{equation}\label{jaja}
A_0:=S^*\upharpoonright\ker\Upsilon_0\quad\text{and}\quad A_1:=S^*\upharpoonright\ker\Upsilon_1
\end{equation}
are self-adjoint extensions of $S$ and one has $\dom S=\ker\Upsilon_0\cap\ker\Upsilon_1$. Moreover, if $\dom S^*$ is equipped with the graph norm, 
then the boundary mapping in Definition~\ref{yes}~(ii) is continuous, and $\Upsilon_0,\Upsilon_1:\dom S^*\rightarrow\cG$ are surjective. 
With the help of the boundary triplet $\{\cG,\Upsilon_0,\Upsilon_1\}$ one can parametrize all extensions $A_\Theta\subset S^*$ of $S$ via linear relations 
$\Theta$ in $\cG$ (i.e. via the closed linear subspaces of $\cG \times \cG$). More precisely, there is a bijective correspondence  $\Theta\mapsto A_\Theta$ between the set of intermediate extensions $A_\Theta$ of $S$ and the set of linear 
relations in $\cG$  via
\begin{equation}\label{atheta}
 A_\Theta f =S^* f,\quad \dom A_\Theta=\left\{f\in\dom S^*: \begin{pmatrix} \Upsilon_0 f \\ \Upsilon_1 f\end{pmatrix}\in\Theta\right\}.
\end{equation}
One has $\overline{A_\Theta}=A_{\overline \Theta}$ and $(A_\Theta)^*=A_{\Theta^*}$ and, in particular, $A_\Theta$ is self-adjoint (symmetric, essentially self-adjoint) in $\sH$
if and only if $\Theta$ is self-adjoint (symmetric, essentially self-adjoint, respectively) in $\cG$; similar statements hold for (maximal) dissipative and accretive extensions; cf. \cite[Chapter 2.1]{BHS20}.
Observe that in the special case of a linear operator $\Theta$ in \eqref{atheta} one has 
\begin{equation}\label{atheta2}
\begin{split}
 A_\Theta f &=S^* f,\\ \dom A_\Theta&=\bigl\{f\in\dom S^*: \Theta\Upsilon_0 f=\Upsilon_1 f\bigr\}=\ker(\Upsilon_1-\Theta\Upsilon_0),
\end{split}
 \end{equation}
(and this characterization remains valid also in the case of a general linear relation $\Theta$ if the expression $\Upsilon_1-\Theta\Upsilon_0$ is understood in the sense of linear relations).
Note that $A_0$ and $A_1$ in \eqref{jaja} correspond to the linear relations 
\begin{equation*}
\left\{\begin{pmatrix} 0 \\ g'\end{pmatrix}:g'\in\cG\right\}\quad\text{and}\quad  \left\{\begin{pmatrix} g \\ 0\end{pmatrix}:g\in\cG\right\},
\end{equation*}
respectively.

In many situations one prefers abstract boundary conditions of the form 
\eqref{atheta2}, where $\Theta$ is an operator. However, for a complete 
description of
the (self-adjoint) extensions it is necessary to allow also multivalued 
parameters, or to use slightly more general forms of boundary conditions 
than in \eqref{atheta2}.
Making use of a representation of a closed (self-adjoint) linear 
relation $\Theta$ as a kernel of a row operator one can rewrite 
\eqref{atheta} using two bounded operators.
In the next proposition this is made explicit for relations $\Theta$ 
with $\rho(\Theta) \neq \emptyset$; cf. \cite[Proposition~1.10.4 and 
Corollary~1.10.5]{BHS20} for the existence of such a representation and 
\cite[Proposition~1.5]{BGP08} for the condition on the self-adjointness.

\begin{proposition}\label{abcd}
Let $\mu\in\C$, and let $\{\cG,\Upsilon_0,\Upsilon_1\}$ be a boundary 
triplet for $S^*$. Then $\Theta$ is a linear relation in $\cG$ with $\mu 
\in \rho(\Theta)$ if and only if $\Theta$ admits the representation
\begin{equation}\label{nanu33}
  \Theta=\left\{\begin{pmatrix} k \\ k'\end{pmatrix}:\cB k=\cA 
k'\,\,\text{for all}\,\, k,k'\in\cG \right\}
\end{equation}
with $\mathcal A,\mathcal B\in\mathcal L(\mathcal G)$ and $\cB - \mu 
\cA$ is bijective.
With this choice the adjoint of $\Theta$ is
\begin{equation}\label{nanu}
  \Theta^*=\left\{\begin{pmatrix} \mathcal A^* h \\ \mathcal B^* 
h\end{pmatrix}: h\in\cG \right\}
\end{equation}
and
the closed intermediate extension $A_{\Theta}$ in \eqref{atheta}
can be described as
\begin{equation}\label{nochamal}
\begin{split}
A_\Theta f &=S^* f,\\
\dom A_\Theta&=\left\{f\in\dom S^*: \mathcal 
B\Upsilon_0 f=\mathcal A\Upsilon_1 f\right\} = \ker (\cA \Upsilon_1 - 
\cB \Upsilon_0).
\end{split}
\end{equation}
Furthermore, with $\Theta$ in \eqref{nanu33} it follows that 
$A_{\Theta}$ in \eqref{nochamal} is self-adjoint in $\sH$
if and only if
\begin{equation}\label{nanu2}
\cA \cB^*= \cB \cA^*
\quad \mbox{and} \quad \ker \begin{pmatrix} \cB & -\cA \\ \cA & \cB 
\end{pmatrix} = \{ 0 \}.
\end{equation}
\end{proposition}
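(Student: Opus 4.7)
The plan is to prove the four assertions in the stated order, with the self-adjointness characterization as the substantive step.

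\emph{Representation.} Given $\mu\in\rho(\Theta)$, setting $\cA:=(\Theta-\mu)^{-1}\in\cL(\cG)$ and $\cB:=\Id_\cG+\mu\cA$ one verifies directly that $(k,k')\in\Theta$ is equivalent to $\cB k=\cA k'$, and $\cB-\mu\cA=\Id_\cG$ is trivially bijective. Conversely, for $\cA,\cB\in\cL(\cG)$ with $\cB-\mu\cA$ bijective, the substitution $\xi:=k'-\mu k$ turns $\cB k=\cA k'$ into $(\cB-\mu\cA)k=\cA\xi$, yielding the parametrization $k=(\cB-\mu\cA)^{-1}\cA\xi$, $k'=(\cB-\mu\cA)^{-1}\cB\xi$, from which $\mu\in\rho(\Theta)$ follows. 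The description \eqref{nochamal} of $A_\Theta$ then drops out of \eqref{atheta} applied to this representation.

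\emph{Adjoint.} The inclusion $\supset$ in \eqref{nanu} is the direct identity $(\cB^*h_0,g)=(h_0,\cB g)=(h_0,\cA g')=(\cA^*h_0,g')$ for $h_0\in\cG$ and $(g,g')\in\Theta$. For the opposite inclusion, I insert the parametrization of $\Theta$ into the defining identity $(h',g)=(h,g')$, let $\xi\in\cG$ vary, and use that $(\cB-\mu\cA)^*=\cB^*-\bar\mu\cA^*$ is bijective; the candidate $h_0:=(\cB^*-\bar\mu\cA^*)^{-1}(h'-\bar\mu h)$ is then directly checked to satisfy $\cA^*h_0=h$ and $\cB^*h_0=h'$.

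\emph{Self-adjointness.} I split the equivalence $\Theta=\Theta^*$ into its two inclusions. First, $\Theta^*\subset\Theta$ is, via \eqref{nanu}, equivalent to $(\cA^*h_0,\cB^*h_0)\in\Theta$ for every $h_0\in\cG$, i.e., to $\cB\cA^*=\cA\cB^*$. Under this symmetry, the orthogonal decomposition $\Theta=\Theta^*\oplus(\Theta\ominus\Theta^*)$---legitimate because $\Theta$ and $\Theta^*$ are both closed subspaces of $\cG\times\cG$---reduces the remaining inclusion $\Theta\subset\Theta^*$ to $\Theta\ominus\Theta^*=\{0\}$. For $(\Leftarrow)$, any $(k,k')\in\Theta\ominus\Theta^*$ is orthogonal in $\cG\times\cG$ to $(\cA^*h_0,\cB^*h_0)$ for every $h_0$, forcing $\cA k+\cB k'=0$; together with $\cB k-\cA k'=0$ this places $(k,k')$ in the kernel appearing in \eqref{nanu2}, hence $(k,k')=0$. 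For $(\Rightarrow)$, any $(k,k')$ in that kernel lies in $\Theta=\Theta^*$, so $(k,k')=(\cA^*h_0,\cB^*h_0)$ for some $h_0$; the second row $\cA k+\cB k'=0$ then reads $(\cA\cA^*+\cB\cB^*)h_0=0$, and pairing with $h_0$ gives $\|\cA^*h_0\|^2+\|\cB^*h_0\|^2=0$, whence $k=k'=0$.

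The main obstacle is the asymmetric role of the two conditions in \eqref{nanu2}: the algebraic identity $\cA\cB^*=\cB\cA^*$ supplies only $\Theta^*\subset\Theta$, while the kernel condition is precisely what forces the reverse inclusion under this symmetry. The Hilbert-space orthogonal splitting inside $\Theta$ is the key tool that bridges the gap and is not directly visible from the algebraic data alone.
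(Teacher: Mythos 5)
The paper does not actually prove Proposition~\ref{abcd}; it only cites \cite[Proposition~1.10.4 and Corollary~1.10.5]{BHS20} and \cite[Proposition~1.5]{BGP08}, so there is no in-paper proof to compare against. Your argument is a correct and essentially self-contained substitute. The representation step (taking $\cA=(\Theta-\mu)^{-1}$, $\cB=\Id+\mu\cA$ in one direction, and the parametrization $k=(\cB-\mu\cA)^{-1}\cA\xi$, $k'=(\cB-\mu\cA)^{-1}\cB\xi$ in the other) is exactly the standard mechanism behind the cited results, and your verification of the candidate $h_0=(\cB^*-\bar\mu\cA^*)^{-1}(h'-\bar\mu h)$ for the adjoint checks out, as does the self-adjointness part: the identity $\cA\cB^*=\cB\cA^*$ is precisely $\Theta^*\subset\Theta$ once \eqref{nanu} is known, and the orthogonal splitting $\Theta=\Theta^*\oplus(\Theta\ominus\Theta^*)$ correctly converts the kernel condition in \eqref{nanu2} into the reverse inclusion (the pairing $\|\cA^*h_0\|^2+\|\cB^*h_0\|^2=0$ in the forward direction is a nice touch). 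Two minor remarks: you implicitly use the paper's stated bijection ``$A_\Theta$ self-adjoint iff $\Theta$ self-adjoint,'' which is legitimate since the paper asserts it just before the proposition, but it is worth making explicit that this is where the passage from $\Theta$ to $A_\Theta$ happens; and in the converse representation step you should note that $\Theta$ is closed (it is the kernel of the bounded map $(k,k')\mapsto\cB k-\cA k'$), which is needed for $\rho(\Theta)\neq\emptyset$ to be meaningful. Neither is a gap.
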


Note that for finite dimensional spaces $\mathcal{G} \cong 
\mathbb{C}^N$ the condition~\eqref{nanu2} about the self-adjointness of 
$\Theta$ in~\eqref{nanu33} is equivalent to
\begin{equation*}
\cA \cB^*= \cB \cA^*
\quad \mbox{and} \quad (\cA \, | \, \cB) \in \mathbb{C}^{N \times 2 N} 
\text{ has full rank};
\end{equation*}
cf. \cite[Corollary~1.6]{BGP08}.

The following elementary example illustrates how boundary triplets naturally appear in the context of ordinary differential operators.
The next example can also be viewed as the limiting case $a\rightarrow 1$ in our discussion of fractional operators in Section~\ref{section_fractional}.

\begin{example}\label{superklar}
In $\sH=L^2(\alpha,\beta)$, where $-\infty<\alpha<\beta<\infty$, we consider the minimal operator associated to $-\Delta$ given by 
\begin{equation*}
\begin{split}
 S f&=-f'',\\\dom S&=W^{2,2}_0(\alpha,\beta)=\bigl\{f\in W^{2,2} (\alpha,\beta): f(\alpha)=f'(\alpha)=f(\beta)=f'(\beta)=0\bigr\},
\end{split}
 \end{equation*}
which is a densely defined closed symmetric operator in $L^2(\alpha,\beta)$ with equal defect numbers $n_\pm(S)=2$; here and in the following we denote by $W^{2,2} (\alpha,\beta)$ the $L^2$-based Sobolev space
 of twice weakly differentiable functions and by $W^{2,2}_0(\alpha,\beta)$ the closure of $C_0^\infty(\alpha, \beta)$ in $W^{2,2}(\alpha, \beta)$.
The adjoint of $S$, the maximal operator, is given by 
\begin{equation*}
 S^* f=-f'',\quad \dom S^*=W^{2,2}(\alpha,\beta),
\end{equation*}
and it is easy to see that $\{\dC^2,\Upsilon_0,\Upsilon_1\}$, where 
\begin{equation*}
 \Upsilon_0 f=\begin{pmatrix} f(\alpha)\\ f(\beta)\end{pmatrix}\quad\text{and}\quad  \Upsilon_1 f=\begin{pmatrix} f'(\alpha)\\ -f'(\beta)\end{pmatrix},\quad f\in\dom S^*,
\end{equation*}
is a boundary triplet for $S^*$. In fact, the abstract Green identity in Definition~\ref{yes}~(i) follows from integration by parts and 
the surjectivity condition Definition~\ref{yes}~(ii) can be verified by choosing, e.g. suitable polynomials with prescribed boundary values at the finite endpoints 
$\alpha$ and $\beta$.
In the present situation we have 
\begin{equation}\label{a1a1}
\begin{split}
 A_0 f=-f'',&\quad \dom A_0=\bigl\{f\in W^{2,2} (\alpha,\beta): f(\alpha)=f(\beta)=0\bigr\},\\
 A_1 f=-f'',&\quad \dom A_1=\bigl\{f\in W^{2,2} (\alpha,\beta): f'(\alpha)=f'(\beta)=0\bigr\},
\end{split}
\end{equation}
so that the self-adjoint extensions $A_0$ and $A_1$ correspond to Dirichlet and Neumann boundary conditions, respectively.
For symmetric matrices $\Theta\in \dC^{2\times 2}$ the self-adjoint extension $A_\Theta$ in \eqref{atheta}-\eqref{atheta2} is given by
\begin{equation*}
 A_\Theta f=-f'',\quad \dom A_\Theta=\left\{f\in W^{2,2}(\alpha,\beta): \Theta\begin{pmatrix} f(\alpha)\\ f(\beta)\end{pmatrix}= \begin{pmatrix} f'(\alpha)\\ -f'(\beta)\end{pmatrix}\right\}
\end{equation*}
(here $\cA=I$ and $\cB=\Theta$ in \eqref{nanu33}) and, more generally, for any pair of matrices $\cA,\cB\in \dC^{2\times 2}$ such that 
$\cA \cB^*= \cB \cA^*$ and $(\mathcal A \,|\, \mathcal B)$ has rank two the corresponding self-adjoint extension $A_\Theta$ with $\Theta$ as in \eqref{nanu33} is given by
\begin{equation*}
 A_\Theta f=-f'',\quad \dom A_\Theta=\left\{f\in W^{2,2}(\alpha,\beta): \cB\begin{pmatrix} f(\alpha)\\ f(\beta)\end{pmatrix}=\cA \begin{pmatrix} f'(\alpha)\\ -f'(\beta)\end{pmatrix}\right\}.
\end{equation*}
\end{example}

In the end of this subsection we recall a theorem  of a certain inverse nature, which can be used to prove that a given 
operator $T$ is the adjoint of a symmetric operator $S$. At the same time one then obtains a boundary triplet for $T=S^*$; cf. \cite[Theorem 2.1.9]{BHS20}.

\begin{theorem}\label{inv}
Let $T$ be an operator in $\sH$, let $\cG$ be a Hilbert space, and assume that 
$\Upsilon_0, \Upsilon_1:\dom T\rightarrow\cG$
are linear mappings such that the following conditions hold:
\begin{itemize}
\item [{\rm (i)}] the abstract Green identity
\[
(Tf,g)-(f,Tg)
=(\Upsilon_1 f,\Upsilon_0 g)-(\Upsilon_0 f,\Upsilon_1 g)
\] 
is valid for all $f,g\in \dom T$,
\item [{\rm (ii)}] $(\Upsilon_0,\Upsilon_1)^\top:\dom T\rightarrow\cG\times\cG$ is onto and $\ker\Upsilon_0\cap\ker\Upsilon_1$ is dense in $\sH$,
\item [{\rm (iii)}] $T\upharpoonright\ker\Upsilon_0$ contains a self-adjoint operator $A_0$.
\end{itemize}
Then 
$$S:=T\upharpoonright\bigl(\ker\Upsilon_0\cap\ker\Upsilon_1\bigr)$$ 
is a densely defined closed
symmetric operator in $\sH$ such that $S^*=T$ and
$\{\cG,\Upsilon_0,\Upsilon_1\}$ is a boundary triplet for $S^*$ with
$A_0=S^*\upharpoonright\ker\Upsilon_0$.
\end{theorem}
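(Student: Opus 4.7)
The plan is to verify the assertions in three stages: symmetry, density, and the easy inclusion $T\subseteq S^*$; the crucial reverse inclusion $S^*\subseteq T$; and finally the closedness of $S$ together with the boundary triplet axioms. First, applying Green's identity (i) to $f,g\in\dom S=\dom T\cap\ker\Upsilon_0\cap\ker\Upsilon_1$ makes both sides vanish, so $S$ is symmetric; density of $\dom S$ in $\sH$ is built into (ii). The same identity with $f\in\dom S$ and $g\in\dom T$ yields $(Sf,g)=(f,Tg)$, whence $g\in\dom S^*$ with $S^*g=Tg$, i.e.\ $T\subseteq S^*$. I also refine (iii) to an equality: the restriction $T\upharpoonright\ker\Upsilon_0$ is symmetric by Green's identity (both boundary values in the first slot vanish) and contains the maximally symmetric operator $A_0$, so the two coincide.

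The crucial step is $S^*\subseteq T$. I fix $g\in\dom S^*$ and consider the linear functional
\[
  L_g(f):=(Tf,g)-(f,S^*g),\qquad f\in\dom T,
\]
which vanishes on $\dom S$ by definition of the adjoint. By (ii), the map $(\Upsilon_0,\Upsilon_1)^\top$ induces a linear isomorphism $\dom T/\dom S\to\cG\times\cG$, so $L_g$ descends to a linear functional $\Phi_g$ on $\cG\times\cG$. Invoking Riesz produces $\beta_g,\gamma_g\in\cG$ with
\[
  L_g(f)=(\Upsilon_1 f,\beta_g)-(\Upsilon_0 f,\gamma_g),
\]
and the surjectivity in (ii) lets me pick $\tilde g\in\dom T$ with $\Upsilon_0\tilde g=\beta_g$ and $\Upsilon_1\tilde g=\gamma_g$. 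Green's identity then shows $L_{\tilde g}=L_g$, so $L_{g-\tilde g}\equiv 0$ on $\dom T$. This reads $g-\tilde g\in\dom T^*$ with $T^*(g-\tilde g)=S^*(g-\tilde g)$, and since $A_0\subseteq T$ yields $T^*\subseteq A_0^*=A_0\subseteq T$, we conclude $g-\tilde g\in\dom T$ and hence $g\in\dom T$. The step I expect to be the main obstacle is the Riesz-representation argument, which requires the induced $\Phi_g$ to be continuous; in infinite dimensions this needs an open-mapping argument that $(\Upsilon_0,\Upsilon_1)^\top\colon (\dom T,\|\cdot\|_T)\to\cG\times\cG$ is open, which in turn can be established from the decomposition $\dom T=\dom A_0\dotplus\ker(T-\mu)$ with $\mu\in\rho(A_0)$, and is automatic when $\dim\cG<\infty$.

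With $T=S^*$ secured, I finish by showing $T^*=S$, from which closedness of $S$ follows via $S=T^*=S^{**}$. The inclusion $S\subseteq T^*$ is immediate from Green's identity; conversely, any $f\in\dom T^*$ lies in $\dom A_0\subseteq\ker\Upsilon_0$ (so $T^*f=Tf$), and the adjoint relation $(Tg,f)=(g,Tf)$ combined with Green's identity and surjectivity of $\Upsilon_0$ forces $\Upsilon_1 f=0$. Hence $f\in\dom S$, giving $T^*\subseteq S$ and completing the equality. The boundary triplet axioms (i) and (ii) of Definition~\ref{yes} for $S^*=T$ are precisely the hypotheses (i) and (ii) of the theorem, and $A_0=S^*\upharpoonright\ker\Upsilon_0$ is the refined form of (iii) obtained in the first paragraph.
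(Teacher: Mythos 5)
The paper does not actually prove this statement; it quotes it verbatim from \cite[Theorem 2.1.9]{BHS20}, so your proposal has to be judged on its own merits rather than against an in-paper argument. The overall architecture is sound: the symmetry and density of $S$, the inclusion $T\subseteq S^*$, the identification $A_0=T\upharpoonright\ker\Upsilon_0$ via the fact that a self-adjoint operator admits no proper symmetric extension, the reduction of $S^*\subseteq T$ to a boundary-value representation of the functional $L_g$, and the closing step $T^*=S$ (whence $S=S^{**}=\overline{S}$ is closed) are all correct; the last of these is a nice, economical way to get closedness.

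The genuine gap is exactly where you flagged it, and the repair you sketch does not close it. To apply Riesz to $\Phi_g$ you need this functional on $\cG\times\cG$ to be bounded. The natural route --- the estimate $|L_g(f)|\le C\|f\|_T$ combined with openness of $(\Upsilon_0,\Upsilon_1)^\top:(\dom T,\|\cdot\|_T)\to\cG\times\cG$ --- is circular: the open mapping theorem requires $(\dom T,\|\cdot\|_T)$ to be complete, i.e.\ $T$ to be closed, which is equivalent to the conclusion $T=S^*$ you are in the middle of proving; and continuity of $\Upsilon_0,\Upsilon_1$ with respect to the graph norm is likewise not a hypothesis here but only a consequence, established after the triplet property is known. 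The decomposition $\dom T=\dom A_0\dotplus\ker(T-\mu)$ does show that $\Upsilon_0\upharpoonright\ker(T-\mu)$ is a linear bijection onto $\cG$, but it provides no norm bound on its inverse, so it merely reorganizes the problem rather than yielding boundedness of $\Phi_g$. For the purposes of this paper the gap is harmless, since the theorem is only invoked with $\cG=\dC^2$, where every linear functional on $\cG\times\cG$ is automatically continuous and your argument is then complete. As a proof of the theorem for a general Hilbert space $\cG$, however, it is incomplete; a standard way to avoid the issue is to pass to $\lambda\in\dC\setminus\dR$, use the decompositions $\dom T=\dom A_0\dotplus\ker(T-\lambda)$ and $\dom S^*=\dom A_0\dotplus\ker(S^*-\lambda)$, and prove directly that $\ker(T-\lambda)=\ker(S^*-\lambda)$, rather than representing an a priori possibly unbounded functional.
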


\subsection{The Weyl function and Krein's formula for self-adjoint extensions}
\label{section_Weyl_function_abstract}

Next we shall recall the definitions and some properties of the $\gamma$-field  and Weyl function corresponding to a boundary triplet; we refer to \cite[Chapter 2.3]{BHS20} for details and full proofs. We make use of the direct sum domain decomposition 
\begin{equation*}
 \dom S^*=\dom A_0  \dot + \ker(S^*-\lambda)=\ker \Upsilon_0  \dot + \ker(S^*-\lambda),\quad\lambda\in\rho(A_0),
\end{equation*}
and from this it is clear that $\Upsilon_0\upharpoonright \ker(S^*-\lambda)$, $\lambda\in\rho(A_0)$, is invertible.

\begin{definition} \label{def_gamma_Weyl_abstract}
Let $\{\cG,\Upsilon_0,\Upsilon_1\}$ be a boundary triplet for $S^*$ and let $A_0=S^*\upharpoonright\ker\Upsilon_0$. Then the corresponding 
$\gamma$-field is defined by
\begin{equation*}
 \rho(A_0)\ni\lambda\mapsto \gamma(\lambda)=\bigl(\Upsilon_0\upharpoonright \ker(S^*-\lambda)\bigr)^{-1}
\end{equation*}
and the corresponding Weyl function is defined by 
\begin{equation*}
 \rho(A_0)\ni\lambda\mapsto M(\lambda)=\Upsilon_1\bigl(\Upsilon_0\upharpoonright \ker(S^*-\lambda)\bigr)^{-1}.
\end{equation*}
\end{definition}

One verifies that $\gamma(\lambda)\in\cL(\cG,\sH)$ and $M(\lambda)\in\cL(\cG)$ for all $\lambda\in\rho(A_0)$. For $\lambda,\mu\in\rho(A_0)$ one has the identities 
\begin{equation}\label{gamm2}
 \gamma(\lambda)=\bigl(1+(\lambda-\mu)(A_0- \lambda)^{-1}\bigr)\gamma(\mu)\quad\text{and}\quad \gamma(\lambda)^*=\Upsilon_1(A_0-\overline\lambda)^{-1},
\end{equation}
which are often useful. In particular, it follows that $\gamma$ is analytic on $\rho(A_0)$.
Moreover, 
$M(\lambda)=\Upsilon_1\gamma(\lambda)$, $\lambda\in\rho(A_0)$,  and the function $\lambda\mapsto M(\lambda)$ is a (operator-valued) Nevanlinna (or Riesz--Herglotz) 
function on $\rho(A_0)$. In particular, for $\eta\in\rho(A_0)\cap\dR$ one has $M(\eta)=M(\eta)^*$ and $M$ is monotonously increasing on 
intervals $I\subset\rho(A_0)\cap\dR$. We also note that for $\lambda,\mu\in\rho(A_0)$ the identities 
\begin{equation*}
 M(\lambda)-M(\mu)^*=(\lambda-\overline\mu)\gamma(\mu)^*\gamma(\lambda)
\end{equation*}
and
\begin{equation} \label{Weyl2}
 M(\lambda)=\Real M(\mu)+\gamma(\mu)^*\bigl[(\lambda-\Real\mu)+(\lambda-\mu)(\lambda-\overline\mu)(A_0-\lambda)^{-1}\bigr]\gamma(\mu) 
\end{equation}
are valid.

We will now return to Example~\ref{superklar} and provide the $\gamma$-field and Weyl function.

\begin{example} \label{exi}
In order to compute the $\gamma$-field and Weyl function corresponding to the boundary triplet in Example~\ref{superklar} we note first that for $\lambda\not=0$ 
\begin{equation*}
 \ker(S^* -\lambda)=\text{span}\,\bigl\{x\mapsto \cos\bigl(\sqrt{\lambda}(x-\alpha)\bigr),x\mapsto \sin\bigl(\sqrt{\lambda}(x-\alpha)\bigr)\bigr\}
\end{equation*}
and that the spectrum of $A_0$ consists of the eigenvalues $\lambda_k=(k\pi)^2/(\beta-\alpha)^2$, $k\in\dN$. For $\lambda\in\rho(A_0)$, $\lambda\not=0$, the $\gamma$-field $\gamma(\lambda):\dC^2\rightarrow L^2(\alpha,\beta)$ 
has the following explicit form
\begin{equation*}
 \gamma(\lambda)\begin{pmatrix}c_1 \\ c_2\end{pmatrix} =c_1 \cos\bigl(\sqrt{\lambda}(\dot x-\alpha)\bigr) +\frac{c_2-c_1 \cos\bigl(\sqrt{\lambda}(\beta-\alpha)\bigr)}{\sin\bigl(\sqrt{\lambda}(\beta-\alpha)\bigr)} \sin\bigl(\sqrt{\lambda}(\dot x-\alpha)\bigr),\qquad \dot{x}\in (\alpha,\beta),
\end{equation*}
and the Weyl function is a $2\times 2$-matrix function of the form
\begin{equation}\label{mmm}
 M(\lambda)=\frac{\sqrt{\lambda}}{\sin\bigl(\sqrt{\lambda}(\beta-\alpha)\bigr)}
\begin{pmatrix} 
-\cos\bigl(\sqrt{\lambda}(\beta-\alpha)\bigr) & 1\\ 1 & -\cos\bigl(\sqrt{\lambda}(\beta-\alpha)\bigr)    
\end{pmatrix}.
\end{equation}
For $\lambda=0$ one obtains
\begin{equation}\label{g0}
 \gamma(0)\begin{pmatrix}c_1 \\ c_2\end{pmatrix} =c_1 +\frac{c_2-c_1}{\beta-\alpha}(\dot x-\alpha)
\end{equation}
and 
\begin{equation}\label{m0}
 M(0)=\frac{1}{\beta-\alpha}
\begin{pmatrix} 
- 1 & 1\\ 1 & - 1
\end{pmatrix}.
\end{equation}
\end{example}

For completeness we now also recall Krein's formula for self-adjoint extensions and refer to \cite[Chapter 2.6]{BHS20} for a more general and complete treatment 
of the spectral analysis of the extensions $A_\Theta$.

\begin{theorem}\label{Kreinformula}
Let $\{\cG,\Upsilon_0,\Upsilon_1\}$
be a boundary triplet for $S^*$, $A_0=S^*\upharpoonright\ker\Upsilon_0$, and let $\gamma$ and $M$ be
the corresponding $\gamma$-field and Weyl function, respectively. Moreover, let
$\Theta$ be a self-adjoint operator or relation in $\cG$  and let
$A_\Theta$ be the corresponding self-adjoint extension via \eqref{atheta}-\eqref{atheta2}. 
Then the following assertions hold for all $\lambda\in\rho(A_0)$:
\begin{itemize}
\item [{\rm (i)}] $\lambda\in\sigma_{\rm p}(A_\Theta)$ if and only if
$0\in\sigma_{\rm p}(\Theta-M(\lambda))$, and in this case
\begin{equation*}
\ker(A_\Theta-\lambda)=\gamma(\lambda)\,\ker(\Theta-M(\lambda));
\end{equation*}

\item [{\rm (ii)}] $\lambda\in\sigma_{\rm d}(A_\Theta)$ if and only if
$0\in\sigma_{\rm d}(\Theta-M(\lambda))$;

\item [{\rm (iii)}] $\lambda\in\sigma_{\rm c}(A_\Theta)$ if and only if
$0\in\sigma_{\rm c}(\Theta-M(\lambda))$;

\item [{\rm (iv)}] $\lambda\in\sigma_{\rm ess}(A_\Theta)$ if and only if
$0\in\sigma_{\rm ess}(\Theta-M(\lambda))$;

\item [{\rm (v)}] $\lambda\in\rho(A_\Theta)$ if and only if
$0\in\rho(\Theta-M(\lambda))$, and in this case 
\begin{equation}\label{kreini}
(A_\Theta-\lambda)^{-1}=(A_0-\lambda)^{-1}
+\gamma(\lambda)\bigl(\Theta-M(\lambda)\bigr)^{-1}\gamma(\overline\lambda)^*.
\end{equation}
\end{itemize}
\end{theorem}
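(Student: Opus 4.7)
The plan is to establish parts (i) and (v) directly, and to derive parts (ii)--(iv) from them. Throughout I would use the fundamental identities $\Upsilon_0 \gamma(\lambda) = I_\cG$ and $\Upsilon_1 \gamma(\lambda) = M(\lambda)$, which are immediate from Definition~\ref{def_gamma_Weyl_abstract}, together with the direct sum decomposition $\dom S^* = \dom A_0 \dotplus \ker(S^* - \lambda)$ valid for every $\lambda \in \rho(A_0)$.

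For (i), I would observe that $f \in \ker(A_\Theta - \lambda)$ is equivalent to $f \in \ker(S^* - \lambda)$ together with $(\Upsilon_0 f, \Upsilon_1 f)^\top \in \Theta$. Writing any such $f$ as $f = \gamma(\lambda) h$ with $h = \Upsilon_0 f \in \cG$, this boundary condition reads $(h, M(\lambda) h)^\top \in \Theta$, which in the sense of linear relations is precisely $h \in \ker(\Theta - M(\lambda))$. Since $\gamma(\lambda)$ is a bijection from $\cG$ onto $\ker(S^* - \lambda)$, this establishes the asserted identification of kernels.

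For (v), given $g \in \sH$ I would make the ansatz $f = (A_0 - \lambda)^{-1} g + \gamma(\lambda) h$ with $h \in \cG$ to be determined. Then automatically $f \in \dom S^*$ and $(S^* - \lambda) f = g$. Applying the boundary maps and using $\Upsilon_0 (A_0 - \lambda)^{-1} g = 0$ together with $\Upsilon_1 (A_0 - \lambda)^{-1} = \gamma(\overline\lambda)^*$ from \eqref{gamm2}, one obtains $\Upsilon_0 f = h$ and $\Upsilon_1 f = \gamma(\overline\lambda)^* g + M(\lambda) h$. The requirement $(\Upsilon_0 f, \Upsilon_1 f)^\top \in \Theta$ then translates, after subtracting $M(\lambda) h$ from the second component, to $(h, \gamma(\overline\lambda)^* g)^\top \in \Theta - M(\lambda)$. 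This equation admits a unique solution $h$ for every $g \in \sH$ precisely when $0 \in \rho(\Theta - M(\lambda))$, giving $h = (\Theta - M(\lambda))^{-1} \gamma(\overline\lambda)^* g$; substituting this back into the ansatz yields formula~\eqref{kreini}.

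For (ii)--(iv), I would exploit that $\gamma(\lambda)$ is a topological isomorphism from $\cG$ onto $\ker(S^* - \lambda)$, so the kernel bijection from (i) preserves finite-dimensionality and thus transfers multiplicities; combined with (v) applied in a punctured neighbourhood of $\lambda$ this yields the discrete spectrum equivalence (ii). For (iii) and (iv), self-adjointness of both $A_\Theta$ and $\Theta$ allows one to characterize $\sigma_{\rm c}$ and $\sigma_{\rm ess}$ via injectivity together with range properties of $A_\Theta - \lambda$ and $\Theta - M(\lambda)$, and these transfer via Krein's formula and (i). The main obstacle is handling the case in which $\Theta$ is a genuine multivalued relation: then $\Theta - M(\lambda)$, $\ker(\Theta - M(\lambda))$ and the various spectral components of $\Theta - M(\lambda)$ must be interpreted in the relation-theoretic sense developed in Chapter~1 of~\cite{BHS20}, and one has to verify that the algebraic translation $(h,k)^\top \in \Theta \Leftrightarrow (h, k - M(\lambda) h)^\top \in \Theta - M(\lambda)$ used above remains valid with this interpretation; once this is in place, the arguments go through verbatim.
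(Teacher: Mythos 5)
The paper does not prove this theorem itself: it is quoted from the literature, with the reader referred to \cite[Chapter 2.6]{BHS20}. So the comparison can only be with the standard textbook argument, and your plan is indeed that argument. Part (i) is correct and complete. In part (v), the sufficiency direction ($0\in\rho(\Theta-M(\lambda))\Rightarrow\lambda\in\rho(A_\Theta)$, together with formula \eqref{kreini}) is also fine, since your ansatz exhausts $\dom S^*$ by the decomposition $\dom S^*=\dom A_0\dotplus\ker(S^*-\lambda)$. But your phrase ``admits a unique solution $h$ for every $g$ precisely when $0\in\rho(\Theta-M(\lambda))$'' hides the necessity direction: unique solvability of $(h,\gamma(\overline\lambda)^*g)^\top\in\Theta-M(\lambda)$ for all $g\in\sH$ a priori only says that $(\Theta-M(\lambda))^{-1}$ is single-valued and defined on $\ran\gamma(\overline\lambda)^*$. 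The missing (one-line, but essential) observation is that $\gamma(\overline\lambda)^*=\Upsilon_1(A_0-\lambda)^{-1}$ is \emph{surjective} onto $\cG$, because $(A_0-\lambda)^{-1}$ maps $\sH$ onto $\ker\Upsilon_0$ and $\Upsilon_1\upharpoonright\ker\Upsilon_0$ is onto by Definition~\ref{yes}~(ii); only then does unique solvability for all $g$ force $(\Theta-M(\lambda))^{-1}$ to be an everywhere defined (closed, hence bounded) operator, i.e.\ $0\in\rho(\Theta-M(\lambda))$.

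The genuine gap is in (ii)--(iv), which you only gesture at. The mechanism you propose for (ii) --- ``(v) applied in a punctured neighbourhood of $\lambda$'' --- does not work as stated: for $\mu$ near $\lambda$, (v) relates $\mu\in\rho(A_\Theta)$ to $0\in\rho(\Theta-M(\mu))$, with the \emph{relation itself} varying, whereas $0\in\sigma_{\rm d}(\Theta-M(\lambda))$ concerns the spectrum of the \emph{fixed} relation $\Theta-M(\lambda)$ near $0$; these are not the same thing and no argument is offered to connect them. The standard route is different: since $A_\Theta$ and $\Theta-M(\lambda)$ are self-adjoint (for real $\lambda\in\rho(A_0)$), one characterizes $\sigma_{\rm c}$ and $\sigma_{\rm ess}$ through the kernel and the closedness of the range, uses (i) for the kernels (your $\gamma(\lambda)$-bijection does preserve dimensions), extracts from your own computation the range identity $\ran(A_\Theta-\lambda)=(\gamma(\overline\lambda)^*)^{-1}\ran(\Theta-M(\lambda))$, and then proves that closedness of the range transfers in both directions (one direction is continuity of $\gamma(\overline\lambda)^*$; the converse needs surjectivity of $\gamma(\overline\lambda)^*$ and the open mapping theorem). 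Then (iv) follows from the Fredholm characterization of $\sigma_{\rm ess}$ for self-adjoint operators/relations, (iii) from the range characterization of $\sigma_{\rm c}$, and (ii) from $\sigma_{\rm d}=\sigma\setminus\sigma_{\rm ess}$ together with (i). None of this is carried out in your proposal, so as written (ii)--(iv) remain unproven.
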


If the self-adjoint relation $\Theta$ is of the form \eqref{nanu33} and $\lambda\in\rho(A_0)$, then one has 
$\lambda\in\sigma_{\rm p}(A_\Theta)$ if and only if
$0\in\sigma_{\rm p}(\mathcal B-\mathcal A M(\lambda))$, and in this case
\begin{equation}\label{jaab}
\ker(A_\Theta-\lambda)=\gamma(\lambda)\,\ker(\mathcal B-\mathcal A M(\lambda)).
\end{equation}
Furthermore, by \cite[Theorem~A.3]{HRT25} one has for $\lambda\in\rho(A_0)$ that $\lambda\in\rho(A_\Theta)$ if and only if
$0\notin\sigma_{\rm p}(\mathcal B-\mathcal A M(\lambda))$ and $\ran \mathcal{A} \subset \ran (\mathcal B-\mathcal A M(\lambda))$, and in this case \eqref{kreini} can be written in the form
\begin{equation}\label{kreini2}
(A_\Theta-\lambda)^{-1}=(A_0-\lambda)^{-1}
+\gamma(\lambda)\bigl(\mathcal B-\mathcal A M(\lambda)\bigr)^{-1}\mathcal A \gamma(\overline\lambda)^*.
\end{equation}

\subsection{Semibounded self-adjoint extensions} \label{section_semibounded_abstract}
Let us now consider the special case that $S$ is a densely defined closed symmetric operator which is also semibounded from below. For our applications it is actually sufficient
to treat the case of a positive lower bound and we refer the reader for a more general treatment to \cite[Chapter 5]{BHS20}. Therefore,
from now on we assume that there is a constant $\kappa>0$ such that 
\begin{equation}\label{semi}
 (Sf,f)\geq \kappa \Vert f\Vert^2
\end{equation}
holds for all $f\in\dom S$, and we choose $\kappa>0$ with this property maximal, so that $\kappa$ is indeed the lower bound of $S$. As in this case the defect numbers of $S$ are automatically equal there exists a boundary triplet $\{\cG,\Upsilon_0,\Upsilon_1\}$
for the adjoint operator $S^*$. Let $\gamma$ and $M$ be the corresponding $\gamma$-field and Weyl function. If the self-adjoint extension 
$A_0=S^*\upharpoonright\ker\Upsilon_0$ is also semibounded, then the lower bound $\kappa_0$ of $A_0$ satisfies $\kappa_0\leq\kappa$ and the Weyl function regarded on the interval 
$(-\infty,\kappa_0)$ is a monotonously increasing (operator) function. Then the limit $M(-\infty)$ exists in the strong resolvent sense, 
$$
(M(-\infty)-\lambda)^{-1}=\lim_{\eta\rightarrow -\infty}(M(\eta)-\lambda)^{-1},\quad \lambda\in\dC\setminus\dR,
$$
and defines a self-adjoint relation $M(-\infty)$ in $\cG$, see \cite[Theorem 5.2.11]{BHS20}. Since $M(\eta)$ and $M(-\infty)$ are self-adjoint, convergence in the 
strong resolvent sense is equivalent to convergence in the strong graph sense, see, e.g. \cite[Corollary 1.9.6]{BHS20}.

It is often convenient and natural to work with 
the Friedrichs extension $S_F$ of $S$. The following proposition shows how $S_F$ can be identified as a self-adjoint extension of $S$ with the help of the Weyl function.

\begin{proposition}\label{sf}
Assume that $S$ is bounded from below as in \eqref{semi}, let  $\{\cG,\Upsilon_0,\Upsilon_1\}$ be a boundary triplet for $S^*$ such that 
$A_0=S^*\upharpoonright\ker\Upsilon_0$ is also semibounded, and let $M$ be the corresponding Weyl function. Then the Friedrichs extension $S_F$ corresponds 
to the self-adjoint relation $M(-\infty)$, that is, 
\begin{equation*}
 S_F f=S^* f,\quad \dom S_F=\left\{f\in\dom S^*: \begin{pmatrix} \Upsilon_0 f \\ \Upsilon_1 f\end{pmatrix}\in M(-\infty)\right\}
\end{equation*}
or, in other words, $S_F=A_{M(-\infty)}$. In particular, if 
\begin{equation*}
M(-\infty)=\left\{\begin{pmatrix} 0 \\ g'\end{pmatrix}:g'\in\cG\right\},
\end{equation*}
then $S_F=S^*\upharpoonright\ker\Upsilon_0=A_0$.
\end{proposition}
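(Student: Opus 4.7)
I would follow the Derkach--Malamud approach as developed in \cite[Chapter~5]{BHS20}. The proof has two conceptual ingredients: the monotone behavior of the Weyl function on the negative real axis, and an order-theoretic correspondence between the boundary parameter $\Theta$ and semiboundedness of the extension $A_\Theta$.

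First, the Weyl function $\eta\mapsto M(\eta)$ is monotonically nondecreasing on the real interval $(-\infty,\kappa_0)\subset\rho(A_0)$. This follows from the identity $M(\lambda)-M(\mu)^*=(\lambda-\bar\mu)\gamma(\mu)^*\gamma(\lambda)$ stated just before \eqref{Weyl2}, together with the relation $\gamma(\lambda)=(1+(\lambda-\mu)(A_0-\lambda)^{-1})\gamma(\mu)$ from \eqref{gamm2}: for real $\mu<\lambda<\kappa_0$ one obtains
$$M(\lambda)-M(\mu)=(\lambda-\mu)\gamma(\mu)^*\bigl(1+(\lambda-\mu)(A_0-\lambda)^{-1}\bigr)\gamma(\mu)\geq 0,$$
because $A_0-\lambda\geq\kappa_0-\lambda>0$. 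Monotonicity together with \cite[Theorem~5.2.11]{BHS20} then guarantees that the strong resolvent limit $M(-\infty):=\srlim_{\eta\to-\infty}M(\eta)$ exists and defines a self-adjoint relation in $\cG$, so that the extension $A_{M(-\infty)}$ is a well-defined self-adjoint operator via \eqref{atheta}.

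The heart of the argument is the order-theoretic characterization of semibounded extensions (originating in \cite{DM91,DM95}, cf.\ \cite[Chapter~5]{BHS20}): for each $\eta<\kappa_0$ the extension $A_\Theta$ satisfies $A_\Theta\geq\eta$ if and only if the self-adjoint relation $\Theta-M(\eta)$ is nonnegative. The Friedrichs extension $S_F$ is intrinsically characterized as the largest semibounded self-adjoint extension of $S$ in the order of quadratic forms. Via the above correspondence, the parameter $\Theta_F$ for $S_F$ must be the maximal self-adjoint relation compatible with $\Theta_F\geq M(\eta)$ for all $\eta<\kappa_0$; letting $\eta\to-\infty$ and invoking the strong resolvent convergence from the previous step identifies this maximal element as $M(-\infty)$. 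Hence $S_F=A_{M(-\infty)}$.

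The ``in other words'' addendum is an immediate consequence of the parametrization \eqref{atheta}: if $M(-\infty)$ coincides with the relation $\{(0,g'):g'\in\cG\}$, then the abstract boundary condition $(\Upsilon_0 f,\Upsilon_1 f)\in M(-\infty)$ reduces to $\Upsilon_0 f=0$ with no constraint on $\Upsilon_1 f$, and the resulting extension is precisely $A_0=S^*\upharpoonright\ker\Upsilon_0$ by \eqref{jaja}. The principal technical obstacle lies in the order-theoretic step in the third paragraph: it must be formulated for possibly multivalued self-adjoint relations $\Theta$, and making the inequality $\Theta-M(\eta)\geq 0$ rigorous beyond the operator setting requires the form-representation machinery for self-adjoint relations from \cite[Chapter~5]{BHS20}.
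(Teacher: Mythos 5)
The paper itself gives no proof of Proposition~\ref{sf}; it is quoted from the literature (\cite[Chapter~5]{BHS20}, \cite{DM91,DM95}), so your sketch can only be judged on its own terms. Your first, second and fourth paragraphs are fine: the monotonicity computation via \eqref{gamm2} is correct, the existence of $M(-\infty)$ as a self-adjoint relation is exactly \cite[Theorem~5.2.11]{BHS20} (already invoked in the text preceding the proposition), and the reduction of the ``in particular'' clause to \eqref{atheta} and \eqref{jaja} is immediate.

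The third paragraph, which you rightly call the heart of the argument, has a genuine gap --- and not merely the multivalued-relation technicality you flag at the end. The equivalence ``$A_\Theta\geq\eta$ iff $\Theta-M(\eta)\geq0$'' is valid only under the additional hypothesis $A_0=S_F$; this is precisely how Proposition~\ref{auchnoch} of the paper is stated, and that hypothesis is not cosmetic. The implication you actually need, $A_\Theta\geq\eta\Rightarrow\Theta\geq M(\eta)$, fails when $A_0\neq S_F$. Concretely, take the interval Laplacian of Examples~\ref{superklar} and~\ref{exi} with the transposed triplet $\widetilde\Upsilon_0=\Upsilon_1$, $\widetilde\Upsilon_1=-\Upsilon_0$, so that $\widetilde A_0$ is the (semibounded, nonnegative) Neumann realization and $\widetilde M(\eta)=-M(\eta)^{-1}$. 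From \eqref{mmm} one checks that $M(\eta)<0$ for $\eta<0$, hence $\widetilde M(\eta)>0$; the parameter $\Theta=0$ gives $A_\Theta=\ker\widetilde\Upsilon_1=\ker\Upsilon_0$, the Dirichlet realization, which satisfies $A_\Theta\geq\eta$ for every $\eta<0$, and yet $\Theta=0\not\geq\widetilde M(\eta)$. Since whether $A_0=S_F$ is exactly what the proposition is supposed to decide, invoking the equivalence here is circular.

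Even granting the equivalence, the deduction would not go through. By monotonicity, the family of constraints ``$\Theta\geq M(\eta)$ for all $\eta<\kappa_0$'' is governed by the supremum of $M$, i.e.\ by the behaviour as $\eta\uparrow\kappa_0$, not by $M(-\infty)=\inf_{\eta}M(\eta)$; and ``the maximal self-adjoint relation compatible with these inequalities'' is always the purely multivalued relation $\{0\}\times\cG$, which dominates every self-adjoint relation in this order and would therefore wrongly yield $S_F=A_0$ for \emph{every} triplet with $A_0$ semibounded. The actual proof (see \cite{DM91} or \cite[Chapter~5]{BHS20}) is form-theoretic rather than order-theoretic: one uses the characterization $\dom S_F=\dom S^*\cap\dom\overline{\mathfrak{t}_S}$ of the Friedrichs extension together with the identity $((S^*-x)f,f)=((A_0-x)f_x,f_x)+(\Upsilon_1f,\Upsilon_0f)-(M(x)\Upsilon_0f,\Upsilon_0f)$ for the decomposition $f=f_x+\gamma(x)\Upsilon_0f$ with $f_x\in\dom A_0$, and analyses the limit $x\to-\infty$ to show that $f\in\dom S^*$ lies in $\dom S_F$ exactly when $(\Upsilon_0f,\Upsilon_1f)$ belongs to the strong graph limit $M(-\infty)$. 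That analytic step is the substance of the proposition and is missing from your argument.
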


In the following we fix a boundary triplet $\{\cG,\Upsilon_0,\Upsilon_1\}$ for $S^*$ such that $S_F=S^*\upharpoonright\ker\Upsilon_0=A_0$. This is possible according to 
\cite[Corollary 5.5]{BHS20} and leads to a particularly convenient situation when considering semibounded self-adjoint extensions of $S$. 
In particular, one has $(-\infty,\kappa)\subset\rho(A_0)$ and hence the Weyl function is a monotonously increasing (operator) function on $(-\infty,\kappa)$, where $\kappa>0$ is as in~\eqref{semi}.
We first parametrize the Krein--von Neumann extension $S_K$ of $S$, which is the smallest nonnegative extension and is (in an abstract form) given by
\begin{equation*}
 S_K f=S^* f,\quad \dom S_K=\dom S\dot +\ker S^*;
\end{equation*}
for different descriptions of the Krein--von Neumann extension and the general case $\kappa\geq 0$ we refer to \cite[Chapter 5.4]{BHS20},
the classical contributions \cite{AloSim80,AN70,G83,Kr47,Kr47a}
and, e.g., the more recent works \cite{AT09,AGMST13,FGKLNS21,HMD04} for an introduction and further references on Krein--von Neumann extensions and applications.
In the framework of self-adjoint extensions parametrized with the help of the boundary triplet $\{\cG,\Upsilon_0,\Upsilon_1\}$ the Krein--von Neumann extension $S_K$ corresponds 
to the self-adjoint operator $M(0)\in\cL(\cG)$, where $M$ is the Weyl function, that is, 
\begin{equation}\label{sk}
\begin{split}
 S_K f &=S^* f,\\ \dom S_K&=\bigl\{f\in\dom S^*: M(0)\Upsilon_0 f=\Upsilon_1 f\bigr\}=\ker(\Upsilon_1- M(0)\Upsilon_0),
\end{split}
\end{equation}
or, in other words, $S_K=A_{M(0)}$; cf. \eqref{atheta}-\eqref{atheta2}. In particular, if $M(0)=0$, 
then $S_K=S^*\upharpoonright\ker\Upsilon_1=A_1$. 

\begin{example} \label{example_KvN}
Consider the Weyl function in \eqref{mmm} and \eqref{m0} corresponding to the boundary triplet from Example~\ref{superklar} for $-\Delta$. 
It is not difficult to see that 
\begin{equation*}
 M(\eta)\rightarrow \left\{\begin{pmatrix} 0 \\ c\end{pmatrix}:c\in\dC^2\right\}
\end{equation*}
in the strong graph sense, and hence in the strong resolvent sense, as $\eta\rightarrow-\infty$. Therefore, Proposition~\ref{sf} shows that the Friedrichs extension $S_F$ of the minimal operator $S$
is given by $A_0=S^*\upharpoonright\ker\Upsilon_0$, 
\begin{equation*}
 S_F f=-f'',\quad \dom S_F=\bigl\{f\in W^{2,2} (\alpha,\beta): f(\alpha)=f(\beta)=0\bigr\}.
\end{equation*}
Note that in this situation the lower bound $\kappa$ of $S$ in \eqref{semi} and the lower bound $\kappa_0=\pi^2/(\beta-\alpha)^2$ of $A_0$ coincide.
For the Krein--von Neumann extension $S_K$ we use \eqref{sk} and \eqref{m0} and conclude 
\begin{equation*}
\begin{split}
 S_K f=-f'',\quad \dom S_K&=\left\{f\in W^{2,2} (\alpha,\beta): M(0)\begin{pmatrix}f(\alpha) \\ f(\beta)\end{pmatrix}=\begin{pmatrix}f'(\alpha) \\ -f'(\beta)\end{pmatrix}\right\}\\
 &=\left\{f\in W^{2,2} (\alpha,\beta): \frac{f(\beta)-f(\alpha)}{\beta-\alpha}= f'(\alpha) = f'(\beta)\right\};
\end{split}
 \end{equation*}
cf. \cite{AloSim80} for a similar characterization.
\end{example}

Now we turn to a discussion of semibounded self-adjoint extensions and their ordering.

\begin{proposition}\label{auchnoch}
Let $\{\cG,\Upsilon_0,\Upsilon_1\}$ be a boundary triplet for $S^*$ with corresponding Weyl
function $M$ and assume that $S_F=S^*\upharpoonright\ker\Upsilon_0$. Let $A_\Theta$ be a self-adjoint extension of $S$
corresponding to the self-adjoint relation $\Theta$ in $\cG$ via \eqref{atheta}-\eqref{atheta2}, and assume that $x < \kappa$. Then $M(x)\in \cL(\cG)$
and the following equivalence holds:
\begin{equation} \label{condition_semibounded}
x \leq A_\Theta \quad \text{if and only if} \quad
M(x) \leq  \Theta.
\end{equation}
If $\Theta$ is given in the form~\eqref{nanu33}-\eqref{nanu}, then 
\begin{equation} \label{condition_semibounded1}
x \leq A_\Theta \quad \text{if and only if} \quad
\cA \cB^*=\cB \cA^* \geq\cA M(x)\cA^*.
\end{equation}
In particular, if $A_\Theta$ is semibounded in $\sH$, then $\Theta$ is semibounded in $\cG$.
Furthermore, if $\cG$ is finite-dimensional or $(S_F-\lambda)^{-1}$ is compact for some $\lambda\in\rho(S_F)$ and $\Theta$ 
is semibounded in $\cG$, then  $A_\Theta$ 
is semibounded in $\sH$.
\end{proposition}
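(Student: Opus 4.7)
My plan is to tackle the proposition in two main strokes: first establishing the core equivalence~\eqref{condition_semibounded}, and then deriving~\eqref{condition_semibounded1} and the semiboundedness statements as essentially immediate corollaries. To begin with, since $A_0 = S_F$ preserves the lower bound of $S$, we have $A_0 \geq \kappa$, so $(-\infty, \kappa) \subset \rho(A_0)$ and Definition~\ref{def_gamma_Weyl_abstract} yields $M(x) \in \cL(\cG)$ for every $x < \kappa$; the identity~\eqref{Weyl2} further shows that $M$ is operator monotone on $(-\infty, \kappa)$. To prove~\eqref{condition_semibounded}, I would decompose $f \in \dom A_\Theta$ as $f = f_0 + \gamma(x) \Upsilon_0 f$ along $\dom S^* = \dom A_0 \dot + \ker(S^* - x)$ and apply the abstract Green identity from Definition~\ref{yes}(i) to arrive at
\[
  (A_\Theta f, f) - x\|f\|^2 = \bigl[(A_0 f_0, f_0) - x \|f_0\|^2\bigr] + \bigl(\Upsilon_1 f - M(x) \Upsilon_0 f, \Upsilon_0 f\bigr).
\]
The first bracket is $\geq (\kappa - x)\|f_0\|^2 \geq 0$, while the second term equals the form of $\Theta - M(x)$ evaluated at the pair $(\Upsilon_0 f, \Upsilon_1 f) \in \Theta$; this makes the implication $\Theta \geq M(x) \Rightarrow A_\Theta \geq x$ transparent on $\dom A_\Theta$, and the converse as well as the passage to the form closure of $A_\Theta$ is then standard.

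The matrix formulation~\eqref{condition_semibounded1} drops out algebraically: substituting the self-adjoint parametrization $\Theta = \{(\cA^* h, \cB^* h) : h \in \cG\}$ from~\eqref{nanu} with $k = \cA^* h$ and $k' = \cB^* h$, the scalar inequality $(k' - M(x) k, k) \geq 0$ rewrites as $(h, (\cA \cB^* - \cA M(x) \cA^*) h) \geq 0$. For the semiboundedness implications, ``$A_\Theta$ semibounded $\Rightarrow \Theta$ semibounded'' is immediate, as taking any $x < \min(\kappa, \inf \sigma(A_\Theta))$ gives $A_\Theta \geq x$, hence $\Theta \geq M(x) \geq -\|M(x)\| I$ by~\eqref{condition_semibounded}. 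Conversely, assume $\Theta \geq -CI$; by~\eqref{condition_semibounded} it suffices to find $x < \kappa$ with $M(x) \leq -CI$. Proposition~\ref{sf} read backward tells us that $M(-\infty) = \{(0, g'): g' \in \cG\}$ in the strong resolvent sense, i.e., $(M(x) - i)^{-1} \to 0$ as $x \to -\infty$. When $\dim \cG < \infty$ this becomes norm convergence, and combined with the operator monotonicity of $M$ it forces every eigenvalue of $M(x)$ to tend to $-\infty$, delivering the required $x$. When $(S_F - \lambda)^{-1}$ is compact, Krein's formula~\eqref{kreini} produces compactness of $(A_\Theta - \lambda)^{-1}$, so $A_\Theta$ has purely discrete spectrum; a form argument based on the decomposition identity above, together with the lower bound of $\Theta$, then shows that only finitely many eigenvalues of $A_\Theta$ can accumulate below any fixed threshold, yielding semiboundedness.

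The main technical obstacle is the rigorous form-theoretic interpretation of $M(x) \leq \Theta$ when $\Theta$ is a genuinely multivalued self-adjoint relation: one must isolate the operator part of $\Theta$, extend the key identity above from $\dom A_\Theta$ to the form closure of $A_\Theta$, and lean on the theory of semibounded self-adjoint relations from \cite[Chapter~5]{BHS20}. The infinite-dimensional compact-resolvent case of the last implication is the other delicate point, since in that setting $M(x)$ need not uniformly dominate arbitrary negative multiples of the identity, so a more careful compactness argument via Krein's formula and the resulting discrete spectrum of $A_\Theta$ is needed.
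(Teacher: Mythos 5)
Your proposal covers considerably more ground than the paper's own proof, which establishes only the equivalence of \eqref{condition_semibounded} and \eqref{condition_semibounded1} --- by exactly the substitution $k=\cA^*h$, $k'=\cB^*h$ into the representation \eqref{nanu} that you use --- and refers to \cite[Chapter~5]{BHS20} for everything else. So your derivation of \eqref{condition_semibounded1} coincides with the paper's argument, and the additional material you supply is largely sound: the identity
\[
((S^*-x)f,f)=((A_0-x)f_0,f_0)+\bigl(\Upsilon_1 f-M(x)\Upsilon_0 f,\Upsilon_0 f\bigr),\qquad f=f_0+\gamma(x)\Upsilon_0 f,
\]
is correct and is the standard route to \eqref{condition_semibounded}. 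For the converse direction you should make explicit that, for a fixed pair $(k,k')\in\Theta$, the component $f_0$ ranges over a coset $g_0+\dom S$ of $\dom S$ in $\dom A_0$, and that $\inf_{s\in\dom S}((A_0-x)(g_0+s),g_0+s)=0$ because $\dom S$ is a form core for $S_F$; this is precisely where the hypothesis $A_0=S_F$ enters, and it deserves to be said rather than subsumed under ``standard''. The implication ``$A_\Theta$ semibounded $\Rightarrow$ $\Theta$ semibounded'' and the finite-dimensional converse (norm convergence $(M(x)-i)^{-1}\to 0$ plus monotonicity of the eigenvalues of $M(x)$) are both correct and complete.

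The one genuine gap is the compact-resolvent case of the final assertion. Compactness of $(A_\Theta-\lambda)^{-1}$ yields only that the spectrum is discrete; it does not exclude eigenvalues accumulating at $-\infty$, so ``purely discrete spectrum plus a form argument'' is not yet a proof, and your closing claim that ``only finitely many eigenvalues of $A_\Theta$ can accumulate below any fixed threshold'' is exactly the statement that needs proving. The obstruction you yourself identify is real: in infinite dimensions the pointwise bound $M(x)\le M(x_0)$ gives no estimate of the form $M(x)\le -C I$, so the lower bound $\Theta\ge -CI$ cannot be played off against $M(x)$ in the decomposition identity without further input. What is actually required --- and what \cite[Chapter~5.4]{BHS20} provides --- is that compactness of $(S_F-\lambda)^{-1}$ forces the Weyl function to diverge \emph{uniformly}, i.e.\ for every $C>0$ there exists $x_C<\kappa$ with $M(x)\le -CI$ for all $x\le x_C$; once this is in hand, $\Theta\ge -CI\ge M(x_C)$ and \eqref{condition_semibounded} finish the argument exactly as in your finite-dimensional case. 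As written, your proposal does not supply this uniform divergence (or any substitute compactness argument for $\gamma(x)$), so this step remains open.
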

\begin{proof}
  While for most of the claims we refer to \cite[Chapter~5]{BHS20}, we shall give an argument that~\eqref{condition_semibounded} and~\eqref{condition_semibounded1} are equivalent. Indeed, if the self-adjoint relation $\Theta$ is given by~\eqref{nanu33}, then it can also be represented as in~\eqref{nanu} and thus,
  \begin{equation*}
  \Theta-M(x)=\left\{\begin{pmatrix}\cA^* h \\ (\cB^* - M(x)\cA^*) h \end{pmatrix}:h\in\cG\right\}.
\end{equation*}
By~\eqref{condition_semibounded} one has that $A_\Theta \geq x$ if and only if the latter relation is nonnegative, that is,  $((\cB^* - M(x)\cA^*) h,\cA^* h)\geq 0$ for $h\in\cG$, which can also be formulated as 
$$  \cA \cB^*=\cB \cA^* \geq\cA M(x)\cA^*.$$
This concludes the proof.
\end{proof}

\begin{corollary}
Let $\{\cG,\Upsilon_0,\Upsilon_1\}$ be a boundary triplet for $S^*$ and assume that $S_F=S^*\upharpoonright\ker\Upsilon_0$ and that also $A_1=S^*\upharpoonright\ker\Upsilon_1$
is semibounded. Let $A_{\Theta_1}$ and $A_{\Theta_2}$ be semibounded self-adjoint extensions of $S$ corresponding to the
semibounded self-adjoint relations $\Theta_1$ and $\Theta_2$.
Then
\begin{equation*}
 A_{\Theta_1} \leq A_{\Theta_2} \quad \text{if and only if} \quad \Theta_1 \leq \Theta_2.
\end{equation*}
\end{corollary}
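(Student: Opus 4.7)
My plan is to use Krein's resolvent formula~\eqref{kreini} as a translator between orderings on $\sH$ and orderings on $\cG$, with the Weyl function $M$ and the $\gamma$-field serving as intermediaries. The strategy mirrors the philosophy of Proposition~\ref{auchnoch}: an operator inequality on $\sH$ involving $A_\Theta$ is equivalent to a matching inequality on $\cG$ involving the parameter $\Theta$ shifted by $M(x)$.

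Starting from the semiboundedness of both $A_{\Theta_1}$ and $A_{\Theta_2}$, I would first pick $x \in \mathbb{R}$ with $x < \kappa$ sufficiently negative that $x$ lies strictly below the lower bounds of both extensions; in particular $x \in \rho(A_{\Theta_1}) \cap \rho(A_{\Theta_2})$, so by Theorem~\ref{Kreinformula}(v) we have $0 \in \rho(\Theta_i - M(x))$ for $i=1,2$. Proposition~\ref{auchnoch} gives $M(x) \leq \Theta_i$, so the relations $\Theta_i - M(x)$ are nonnegative and $(\Theta_i-M(x))^{-1}$ are bounded nonnegative operators on $\cG$. Using the standard equivalence $A_{\Theta_1}\leq A_{\Theta_2}$ iff $(A_{\Theta_1}-x)^{-1} \geq (A_{\Theta_2}-x)^{-1}$ as bounded nonnegative operators, I would invoke~\eqref{kreini} to subtract off the common term $(A_0-x)^{-1}$ and reduce the inequality to
\begin{equation*}
\gamma(x)\bigl[(\Theta_1-M(x))^{-1}-(\Theta_2-M(x))^{-1}\bigr]\gamma(x)^*\geq 0
\end{equation*}
on $\sH$. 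Next I would remove the sandwich by $\gamma(x)$ and $\gamma(x)^*$: by~\eqref{gamm2} one has $\gamma(x)^*=\Upsilon_1(A_0-x)^{-1}$, and since $\Upsilon_1$ is surjective from $\dom A_0$ onto $\cG$ (a direct consequence of Definition~\ref{yes}(ii)), $\gamma(x)^*$ is surjective from $\sH$ onto $\cG$. The displayed inequality is therefore equivalent to $(\Theta_1-M(x))^{-1}\geq(\Theta_2-M(x))^{-1}$. Finally, operator monotonicity of the inverse on nonnegative invertible self-adjoint operators yields the equivalence with $\Theta_1-M(x)\leq\Theta_2-M(x)$, i.e., $\Theta_1\leq\Theta_2$, which gives both implications simultaneously.

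The main obstacle will be the bookkeeping for genuine linear relations: when $\Theta_i$ has a nontrivial multivalued part, $\Theta_i-M(x)$ is a relation rather than an operator, although $0\in\rho(\Theta_i-M(x))$ still forces the inverse to be a bounded (possibly non-injective) operator on $\cG$, and the ordering $\Theta_1\leq\Theta_2$ has to be interpreted via the induced orderings on the operator parts together with a compatibility condition on the multivalued parts. This is exactly where the hypothesis that $A_1=S^*\upharpoonright\ker\Upsilon_1$ is semibounded enters: it ensures the Weyl function $M(x)$ has a well-controlled strong resolvent limit as $x\to-\infty$ and, combined with surjectivity of $\gamma(x)^*$, guarantees that the operator-part comparison on $\cG$ is faithful. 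Once these technicalities are cleared, the argument reduces to the Krein-formula substitution plus the two standard monotonicity facts above, in line with the development in~\cite[Chapter~5]{BHS20}.
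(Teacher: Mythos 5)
The paper does not actually prove this corollary---it is stated as a consequence of Proposition~\ref{auchnoch}, whose proof is delegated to \cite[Chapter~5]{BHS20}---and your Krein-formula argument is precisely the standard route taken there: pass from $A_{\Theta_1}\leq A_{\Theta_2}$ to a resolvent inequality at a point $x$ below both lower bounds and below $\kappa$, cancel the common term $(A_0-x)^{-1}$ via \eqref{kreini}, strip the factors $\gamma(x)$ and $\gamma(x)^*$ (for which dense range of $\gamma(x)^*$, i.e.\ injectivity of $\gamma(x)$, already suffices), and invoke antitonicity of the inverse for nonnegative, boundedly invertible self-adjoint relations together with the fact that subtracting the bounded self-adjoint operator $M(x)$ respects the form ordering. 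The argument is correct; the one inaccuracy is your account of where the hypothesis that $A_1=S^*\upharpoonright\ker\Upsilon_1$ is semibounded enters: it has nothing to do with $M(-\infty)$ or with making the comparison on $\cG$ ``faithful''---your chain of equivalences never uses it---rather, in \cite[Chapter~5]{BHS20} this hypothesis serves to guarantee that semibounded parameters $\Theta$ correspond to semibounded extensions $A_\Theta$ and conversely, a correspondence which in the formulation of the corollary is simply assumed.
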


Finally we characterize all nonnegative self-adjoint realizations of $-\Delta$ in $L^2(\alpha,\beta)$ in the next example.

\begin{example} \label{example_ordering}
It follows from Proposition~\ref{auchnoch} and \eqref{m0} that a self-adjoint realization $A_\Theta$ of $-\Delta$ in $L^2(\alpha,\beta)$ is nonnegative if and only if 
\begin{equation}\label{ineq}
 M(0)=\frac{1}{\beta-\alpha}
\begin{pmatrix} 
- 1 & 1\\ 1 & - 1
\end{pmatrix}\leq\Theta.
\end{equation}
Observe that \eqref{ineq} is satisfied by a symmetric $2\times 2$-matrix $\Theta$ 
if and only if the symmetric $2\times 2$-matrix  $\Theta-M(0)$ is nonnegative or, equivalently, the eigenvalues of $\Theta-M(0)$ are nonnegative (note that
the eigenvalues of $M(0)$ are given by $\{-\frac{2}{\beta - \lambda},0\}$). 
In the case that $\Theta$ is a self-adjoint relation with a nontrivial multivalued part it is more convenient to use a representation of 
$\Theta = \Theta^*$ as in \eqref{nanu} with $\cA,\cB\in\dC^{2\times 2}$. Using~\eqref{condition_semibounded1} we get that in this case $A_\Theta \geq 0$ if and only if
$$  \cA \cB^*=\cB \cA^* \geq\cA M(0)\cA^*.$$
\end{example}

\section{Some properties of H\"ormander transmission spaces} \label{sec3}

In this section we first summarize some notations for distributions and Sobolev spaces, and recall the concept
H\"ormander transmission spaces afterwards. We also collect some useful properties of these spaces that will be needed in Section~\ref{section_fractional},
where boundary triplets for the fractional Laplacian are constructed. 
Throughout this section it is always assumed that $-\infty < \alpha < \beta < \infty$. We also remark that the following considerations remain valid for bounded Lipschitz domains $\Omega \subset \mathbb{R}^n$.

\subsection{Distributions and Sobolev spaces} \label{section_Sobolev_spaces}

Let $\mathcal{D}(\mathbb{R})$ and $\mathcal{S}(\mathbb{R})$ be the spaces of compactly supported and rapidly decaying $C^\infty$-functions on $\mathbb{R}$, respectively.
Their duals $\mathcal{D}'(\mathbb{R})$ and $\mathcal{S}'(\mathbb{R})$ are the spaces of distributions and tempered distributions, respectively. Similarly, 
for the bounded open interval $(\alpha, \beta) \subset \mathbb{R}$ the space of all $C^\infty$-functions with compact support in $(\alpha, \beta)$ is denoted by $\mathcal{D}(\alpha, \beta)$ and the corresponding space of distributions is $\mathcal{D}'(\alpha, \beta)$. 
The restriction of distributions on $\mathbb{R}$ to the interval $(\alpha, \beta)$ is denoted by $\mathfrak{r}$, so  
\begin{equation} \label{def_restriction_op}
   \mathfrak{r}: \mathcal{D}'(\mathbb{R}) \rightarrow \mathcal{D}'(\alpha, \beta), \qquad \mathfrak{r} f = f_{|(\alpha, \beta)} := f \upharpoonright \mathcal{D}(\alpha, \beta),
\end{equation}
where $\mathcal{D}(\alpha, \beta)$ is embedded in $\mathcal{D}(\mathbb{R})$ via extension by zero.
The set of all tempered distributions supported in $[\alpha, \beta]$ is $\dot{\mathcal{S}}'([\alpha, \beta])$, i.e.
\begin{equation*}
  \dot{\mathcal{S}}'([\alpha, \beta]) = \bigl\{ f \in \mathcal{S}'(\mathbb{R}): f = 0 \text{ on } \mathbb{R} \setminus [\alpha, \beta]\bigr\},
\end{equation*}
where, as usual, for a (tempered) distribution $f=0$ on $\mathbb{R} \setminus [\alpha, \beta]$ means $f(\varphi)=0$ for all $\varphi\in \mathcal{D}(\mathbb R\setminus [\alpha, \beta])$.
Next, the Sobolev space $H^s(\mathbb{R})$ of order $s \in \mathbb{R}$ on $\mathbb{R}$ is defined by
\begin{equation*}
  H^s(\mathbb{R}) := \left\{ f \in \mathcal{S}'(\mathbb{R}): \int_{\mathbb{R}} (1 + x^2)^s |\mathcal{F} f(x)|^2 dx < \infty \right\},
\end{equation*}
where $\mathcal{F}$ is the Fourier transform on $\mathcal{S}'(\mathbb{R})$.
For a bounded interval $(\alpha, \beta) \subset \mathbb{R}$ we set 
\begin{equation*}
  \dot{H}^s([\alpha, \beta]) := \big\{ f\in H^s(\mathbb{R}):f = 0 \text{ on } \mathbb{R} \setminus [\alpha, \beta] \big\} = H^s(\mathbb{R})\cap \dot{\mathcal{S}}'([\alpha, \beta])
\end{equation*}
and
\begin{equation*}
\overline{H}^{s}(\alpha, \beta):=\{\mathfrak{r} F: F \in H^{s}(\mathbb{R})\}.
\end{equation*}
Likewise, we write
\begin{equation*}
  \overline{C}^\infty([\alpha, \beta]) := \big\{ f_{|[\alpha, \beta]}: f \in C^\infty(\mathbb{R}) \big\}.
\end{equation*}
Note that $\dot{H}^s([\alpha, \beta]) \subset \mathcal{S}'(\mathbb{R})$, while $\overline{C}^\infty([\alpha, \beta]), \overline{H}^{s}(\alpha, \beta) \subset \mathcal{D}'(\alpha, \beta)$. 

While in general the spaces $\overline{H}^s(\alpha, \beta)$ and $\mathfrak{r} \dot{H}^s([\alpha, \beta])$ are different, for suitable $s$ they coincide:

\begin{lemma} \label{lemma_H_dot_H_bar}
  For $s \in ( -\frac{1}{2}, \frac{1}{2})$ one has $\overline{H}^{s}(\alpha, \beta) = \mathfrak{r} \dot{H}^{s}([\alpha, \beta])$.
\end{lemma}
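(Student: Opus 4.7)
The plan is to prove the two inclusions separately. The inclusion $\mathfrak{r}\dot{H}^s([\alpha,\beta]) \subseteq \overline{H}^s(\alpha,\beta)$ is immediate: every $F \in \dot{H}^s([\alpha,\beta])$ is by definition an element of $H^s(\mathbb{R})$, so its restriction $\mathfrak{r}F$ lies in $\overline{H}^s(\alpha,\beta)$.

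For the reverse inclusion the central claim I would prove is that for $|s|<\tfrac{1}{2}$ multiplication by the characteristic function $\chi_{[\alpha,\beta]}$, initially defined on $\mathcal{D}(\mathbb{R})$, extends to a bounded operator on $H^s(\mathbb{R})$. Granted this, the conclusion follows at once: for any $f \in \overline{H}^s(\alpha,\beta)$, pick $F \in H^s(\mathbb{R})$ with $\mathfrak{r}F = f$ and set $\tilde F := \chi_{[\alpha,\beta]} F \in H^s(\mathbb{R})$. Then $\tilde F$ is supported in $[\alpha,\beta]$, so $\tilde F \in \dot{H}^s([\alpha,\beta])$, and $\mathfrak{r}\tilde F = \mathfrak{r}F = f$, giving $f \in \mathfrak{r}\dot{H}^s([\alpha,\beta])$.

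The main obstacle is the multiplier claim. For $s = 0$ it is trivial since $H^0(\mathbb{R}) = L^2(\mathbb{R})$. For $s \in (0,\tfrac{1}{2})$ I would use the equivalence of $\|\cdot\|_{H^s(\mathbb{R})}$ with the norm based on the Slobodeckij seminorm $\iint|u(x)-u(y)|^2|x-y|^{-1-2s}\,dx\,dy$. For $u \in \mathcal{D}(\mathbb{R})$, the only term in the Slobodeckij seminorm of $\chi_{[\alpha,\beta]} u$ that is not trivially dominated by $\|u\|_{H^s(\mathbb{R})}^2$ arises from pairs $(x,y)$ with exactly one variable in $[\alpha,\beta]$; integrating out the outside variable collapses this contribution to a multiple of
\begin{equation*}
\int_\alpha^\beta \bigl((x-\alpha)^{-2s} + (\beta-x)^{-2s}\bigr)\,|u(x)|^2\,dx,
\end{equation*}
which is controlled by $\|u\|_{H^s(\mathbb{R})}^2$ via the one-dimensional fractional Hardy inequality, valid precisely because $2s<1$. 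For $s \in (-\tfrac{1}{2},0)$ I would pass to the dual: $H^{-s}(\mathbb{R})$ identifies with the antidual of $H^s(\mathbb{R})$ under the distributional pairing, and with respect to this pairing multiplication by $\chi_{[\alpha,\beta]}$ on the two scales is formally self-adjoint, so boundedness at level $s$ transfers from the already-established boundedness at level $-s \in (0, \tfrac{1}{2})$. The Hardy step is exactly where the restriction $|s|<\tfrac{1}{2}$ enters, which also explains why the stated equality fails at $|s|=\tfrac{1}{2}$.
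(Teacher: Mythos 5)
Your proposal is correct, and it reaches the lemma by a genuinely different route than the paper. The paper's proof is essentially a citation chain: for $s\in[0,\tfrac12)$ it identifies $\dot H^s([\alpha,\beta])$ with McLean's space $\widetilde H^s(\alpha,\beta)$ and invokes \cite[Theorem~3.40]{M00} for $\mathfrak{r}\widetilde H^s(\alpha,\beta)=\overline H^s(\alpha,\beta)$, and for $s\in(-\tfrac12,0)$ it dualizes via \cite[Theorem~3.30]{M00}. You instead reprove the substantive content directly: the nontrivial inclusion $\overline H^s\subseteq \mathfrak{r}\dot H^s$ is reduced to the statement that $\chi_{[\alpha,\beta]}$ is an $H^s(\mathbb{R})$-multiplier for $|s|<\tfrac12$, which you establish via the Slobodeckij seminorm and the one-dimensional fractional Hardy inequality for $0<s<\tfrac12$, and then by duality for negative $s$ (your duality step mirrors the paper's). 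What your approach buys is self-containedness and transparency about where $|s|<\tfrac12$ enters -- the Hardy inequality fails at $s=\tfrac12$, which is exactly why the lemma fails there; what the paper's approach buys is brevity and the avoidance of re-deriving a standard multiplier theorem (which is in substance what McLean's Theorem~3.40 encapsulates). Two small points you should still spell out to make the argument airtight: for $s\in(0,1)$ you need the equivalence of the Fourier-based norm with the $L^2$-plus-Slobodeckij norm, and for $s<0$ the identities $\mathfrak{r}(\chi_{[\alpha,\beta]}F)=\mathfrak{r}F$ and $\operatorname{supp}(\chi_{[\alpha,\beta]}F)\subseteq[\alpha,\beta]$ are not pointwise statements but follow from a short density argument, since there the multiplication operator is only defined by continuous extension from $L^2(\mathbb{R})$.
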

\begin{proof}
  To see the claim for $s \in [0, \frac{1}{2})$, note that $\dot{H}^{s}([\alpha, \beta])$ coincides with the set denoted in  \cite{M00} by $\widetilde{H}^s(\alpha, \beta)$,  see also \cite[Theorem~3.29~(ii)]{M00}. By \cite[Theorem~3.40]{M00} one has $\mathfrak{r} \widetilde{H}^s(\alpha, \beta) =  \overline{H}^s(\alpha, \beta)$, which is the claim for $s \in [0, \frac{1}{2})$.
  
  To see the claim for $s \in (-\frac{1}{2}, 0 )$, note that by \cite[Theorem~3.30~(i)]{M00} the set $\overline{H}^s(\alpha, \beta)$ coincides with the dual space $(\mathfrak{r}\widetilde{H}^{-s}(\alpha, \beta))^*$ of $\mathfrak{r} \widetilde{H}^{-s}(\alpha, \beta) = \mathfrak{r} \dot{H}^{-s}([\alpha, \beta])$, which is, by the first part of the proof, equal to $\overline{H}^{-s}(\alpha, \beta)$. Using again \cite[Theorem~3.30]{M00} one gets that $\overline{H}^{-s}(\alpha, \beta)^* = \mathfrak{r} \widetilde{H}^{s}(\alpha, \beta) = \mathfrak{r} \dot{H}^{s}([\alpha, \beta])$. Summing up, this means that $\overline{H}^s(\alpha, \beta) = \mathfrak{r} \dot{H}^{s}([\alpha, \beta])$, i.e. the claim is also true for $s \in (-\frac{1}{2}, 0 )$.
\end{proof}

Finally, we construct the extension by zero $\mathfrak{e} f$ of $f \in \overline{H}^s(\alpha, \beta)$ with $s > -\frac{1}{2}$.
For $s\geq 0$ the situation is clear as $\overline{H}^s(\alpha, \beta)\subset L^2(\alpha,\beta)$ and $\mathfrak{e}$ acts on $f \in H^s(\alpha, \beta)$ as
\begin{equation*}
  \mathfrak{e} f = \begin{cases} f & \text{ in } (\alpha, \beta), \\ 0 & \text{ in } \mathbb{R} \setminus (\alpha, \beta). \end{cases}
\end{equation*}
In the case $s\in (-\frac{1}{2},0)$
we rely on Lemma~\ref{lemma_H_dot_H_bar} and choose $F\in \dot{H}^s([\alpha, \beta])$ such that $f = \mathfrak{r} F$. Observe that $F\in \dot{H}^s([\alpha, \beta])$
is unique. In fact, if $F_1,F_2\in \dot{H}^s([\alpha, \beta])$ such that $\mathfrak{r} F_1=\mathfrak{r} F_2$, then $(\mathfrak{r} F_1-\mathfrak{r}F_2)(\varphi)=0$ for 
all $\varphi\in \mathcal{D}(\alpha, \beta)$ and hence 
$(F_1-F_2)(\mathfrak{e}\varphi)=0$ for the $0$-extensions 
$\mathfrak{e}\varphi$.
Since $-s\in (0,\frac{1}{2})$ it follows from \cite[Theorem~3.30 and Theorem 3.40]{M00} that
the $0$-extensions of $\mathcal{D}(\alpha, \beta)$-functions are dense in the dual of 
$\dot{H}^s([\alpha, \beta])$, which implies $F_1-F_2=0$. Therefore, if $s\in (-\frac{1}{2},0)$, then the extension operator
\begin{equation} \label{def_extension_op}
  \mathfrak{e}: \overline{H}^{s}(\alpha, \beta) \rightarrow \dot{H}^s([\alpha, \beta]), \quad \mathfrak{e} f = F,
\end{equation}
is well defined.
By construction, if $s > -\frac{1}{2}$ we have 
\begin{equation}\label{guenstig}
\mathfrak{r} \mathfrak{e} f = f\quad\text{and}\quad \mathfrak{e} \mathfrak{r} F = F\quad\text{for all } f \in \overline{H}^s(\alpha, \beta),\,F \in \dot{H}^s([\alpha, \beta]).
\end{equation}

\subsection{The H\"ormander transmission spaces $H^{\mu(s)}([\alpha, \beta])$} \label{section_Hoermander_transmission_spaces}

In this subsection we recall the notion of H\"ormander transmission spaces and we provide some useful properties of the spaces, that will be needed to
study the restricted fractional Laplacian in an operator theoretic framework. These spaces were originally introduced in \cite{H65} and further studied and developed in \cite{AG23, G15, G18}, see also \cite{G14, G16, G18-1, G19}.
We follow the exposition introduced in the before mentioned papers by Grubb.

In order to define the H\"ormander transmission spaces, we will make for $\mu > -\frac{1}{2}$ use of the order reducing operator $\Lambda_+^{(\mu)}$ from \cite[Theorem~1.3]{G15}, which is an elliptic pseudodifferential operator on $\mathbb{R}$ and classical of order $\mu$, and thus
\begin{equation*} 
  \Lambda_+^{(\mu)}: H^{s}(\mathbb{R}) \rightarrow H^{s-\mu}(\mathbb{R})
\end{equation*}
is bounded for all $s \in \mathbb{R}$. Moreover, this operator $\Lambda_+^{(\mu)}$
preserves the support in $[\alpha, \beta]$ (i.e. it maps elements of $\dot{\mathcal{S}}'([\alpha, \beta])$ to $\dot{\mathcal{S}}'([\alpha, \beta])$) and defines a homeomorphism 
\begin{equation} \label{mapping_property_Lambda}
  \Lambda_+^{(\mu)}: \dot{H}^{s}([\alpha, \beta]) \rightarrow \dot{H}^{s-\mu}([\alpha, \beta])
\end{equation}
for all $s \in \mathbb{R}$, with inverses $(\Lambda_+^{(\mu)})^{-1}$ that also preserve support in $[\alpha, \beta]$; cf. \cite[Theorem~1.3]{G15} for details. 

The next definition follows \cite[Definition~1.8]{G15}, see also \cite[Proposition~1.7 and the comments after Definition~1.8]{G15} for the equality in~\eqref{eq:basic-transmi-def}. We remark that the H\"ormander transmission spaces $H^{\mu(s)}([\alpha, \beta])$ are defined in \cite{G15} for a larger set of the parameter $\mu$, but motivated by our application we will state the definition for the case $\mu > -\frac{1}{2}$.
Recall that $\mathfrak{e}$ and $\mathfrak{r}$ denote the extension and restriction operator, see~\eqref{def_extension_op} and~\eqref{def_restriction_op}, respectively.

\begin{definition}\label{hoermi}
Let $\mu>-\frac{1}{2}$ and $s \in \mathbb{R}$ such that $s-\mu>-\frac{1}{2}$. Then the \textit{$\mu$-transmission space} is defined by 
\begin{equation}\label{eq:basic-transmi-def}
\begin{split}
H^{\mu(s)}([\alpha, \beta]):&=\Lambda_+^{(-\mu)}\mathfrak{e} \overline{H}^{s-\mu}(\alpha, \beta) \\
&= \Big\{ f \in \dot{\mathcal{S}}'([\alpha, \beta]): \Lambda_+^{(\mu)} f \in \dot{H}^{-1/2 + \varepsilon}([\alpha, \beta]) \text{ for some } \varepsilon > 0 \\
&\qquad \qquad \qquad \qquad \qquad \qquad \quad\,\,\,\, \text{ and } \mathfrak{r} (\Lambda_+^{(\mu)} f) \in \overline{H}^{s-\mu}(\alpha, \beta) \Big\}.
\end{split}
\end{equation}
\end{definition}

Note that $H^{\mu(s)}([\alpha, \beta])$ is a Banach space when equipped with the norm
\begin{equation} \label{norm_transmission_space}
  \| f \|_{\mu(s)} := \big\| \mathfrak{r} \Lambda_+^{(\mu)} f \big\|_{\overline{H}^{s-\mu}(\alpha, \beta)};
\end{equation}
cf. \cite[Definition~1.8]{G15}. Moreover, for $s \geq 0$ the inclusion $H^{\mu(s)}([\alpha, \beta]) \subset L^2(\mathbb{R})$ holds.

In the following lemma we collect some known properties of $H^{\mu(s)}([\alpha, \beta])$ that will be useful for us. For the claims in (i),(iii)\&(iv), we provide the simple arguments for the proof. Here and in the following, we will make use of the distance function $t$ from the boundary that is defined by
\begin{equation}\label{eq:dist-t}
t(x):=\dist(x,\partial (\alpha, \beta))=\min\{x - \alpha, \beta -x\}, \qquad x \in (\alpha, \beta).
\end{equation}

\begin{lemma} \label{lemma_H_mu_s}
  Assume that $\mu>-\frac{1}{2}$ and $s -\mu >  -\frac{1}{2}$. Then the following assertions hold for the 
  H\"ormander transmission spaces $H^{\mu(s)}([\alpha, \beta])$ in~\eqref{eq:basic-transmi-def}:
  \begin{itemize}
    \item[(i)] If $s - \mu < \frac{1}{2}$, then $$H^{\mu(s)}([\alpha, \beta]) = \dot{H}^s([\alpha, \beta]),$$ and, in particular,  $\mathfrak{r} H^{\mu(0)}([\alpha, \beta]) = L^2(\alpha, \beta)$.
     \item[(ii)] If $s - \mu > \frac{1}{2}$ and $s - \mu - \frac{1}{2} \notin \mathbb{N}$, then 
    \begin{equation*}
      \dot{H}^{s}([\alpha, \beta]) \subset H^{\mu(s)}([\alpha, \beta]) \subset \dot{H}^{s}([\alpha, \beta]) + \mathfrak{e} t^\mu \overline{H}^{s-\mu}(\alpha, \beta).
    \end{equation*}
    \item[(iii)] If $s_1 < s_2$, then $H^{\mu(s_2)}([\alpha, \beta])$ is continuously embedded in $H^{\mu(s_1)}([\alpha, \beta])$.
    \item[(iv)] For all $s > 0$ the space $\mathfrak{r} H^{\mu(s)}([\alpha, \beta])$ is compactly embedded in $L^2(\alpha, \beta)$.
    \item[(v)] The set $t^\mu \overline{C}^\infty([\alpha, \beta])$ is dense in $\mathfrak{r}H^{\mu(s)}([\alpha, \beta])$.
  \end{itemize}
\end{lemma}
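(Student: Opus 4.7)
The plan hinges on the observation that, by~\eqref{eq:basic-transmi-def} together with the definition of the norm in~\eqref{norm_transmission_space}, the map $\mathfrak{r}\Lambda_+^{(\mu)}: H^{\mu(s)}([\alpha,\beta]) \to \overline{H}^{s-\mu}(\alpha,\beta)$ is an isometric isomorphism with inverse $\Lambda_+^{(-\mu)}\mathfrak{e}$; indeed, if $f=\Lambda_+^{(-\mu)}\mathfrak{e}g$ with $g\in\overline H^{s-\mu}(\alpha,\beta)$, then $\mathfrak{r}\Lambda_+^{(\mu)}f = \mathfrak{r}\mathfrak{e}g=g$ by~\eqref{guenstig}. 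Through this isomorphism, properties of $H^{\mu(s)}([\alpha,\beta])$ are transported to familiar properties of the fractional Sobolev scale on the bounded interval $(\alpha,\beta)$, and the three claims in question reduce to standard facts. The remaining assertions (ii) and (v) can be quoted from Grubb's work, e.g.\ from \cite{G15}.

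For (i), the hypothesis $s-\mu<\tfrac12$ together with the standing assumption $s-\mu>-\tfrac12$ places $s-\mu$ in the interval $(-\tfrac12,\tfrac12)$, so Lemma~\ref{lemma_H_dot_H_bar} and~\eqref{guenstig} give $\mathfrak{e}\overline{H}^{s-\mu}(\alpha,\beta) = \dot{H}^{s-\mu}([\alpha,\beta])$. Applying the homeomorphism~\eqref{mapping_property_Lambda} with parameter $-\mu$ then yields
\begin{equation*}
H^{\mu(s)}([\alpha,\beta]) = \Lambda_+^{(-\mu)}\dot{H}^{s-\mu}([\alpha,\beta]) = \dot{H}^s([\alpha,\beta]).
\end{equation*}
Specializing to $s=0$ identifies $H^{\mu(0)}([\alpha,\beta])$ with the set of $L^2(\mathbb R)$-functions supported in $[\alpha,\beta]$, so $\mathfrak{r}H^{\mu(0)}([\alpha,\beta])=L^2(\alpha,\beta)$. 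Claim (iii) is then immediate from the norm~\eqref{norm_transmission_space} and the standard continuous embedding $\overline{H}^{s_2-\mu}(\alpha,\beta)\hookrightarrow \overline{H}^{s_1-\mu}(\alpha,\beta)$ for $s_1<s_2$ (both spaces being well defined since each exponent exceeds $-\tfrac12$).

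For (iv), I would invoke the Rellich--Kondrachov theorem on the bounded interval $(\alpha,\beta)$, which yields that $\overline{H}^{s-\mu}(\alpha,\beta)\hookrightarrow \overline{H}^{-\mu}(\alpha,\beta)$ is compact whenever $s>0$. Transporting this through the isometric isomorphism $\mathfrak{r}\Lambda_+^{(\mu)}$ and its inverse gives compactness of the natural inclusion $H^{\mu(s)}([\alpha,\beta])\hookrightarrow H^{\mu(0)}([\alpha,\beta])$; this transport is legitimate because the identity $f=\Lambda_+^{(-\mu)}\mathfrak{e}(\mathfrak{r}\Lambda_+^{(\mu)}f)$ makes the relevant diagram commute, so the two notions of inclusion coincide on the level of distributions. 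Postcomposing with the continuous restriction $\mathfrak{r}: H^{\mu(0)}([\alpha,\beta])=\dot{H}^0([\alpha,\beta])\to L^2(\alpha,\beta)$ from (i) then delivers the desired compactness of $\mathfrak{r}: H^{\mu(s)}([\alpha,\beta])\to L^2(\alpha,\beta)$.

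The only genuine obstacle I anticipate is the initial verification that $\mathfrak{r}\Lambda_+^{(\mu)}$ really is an isometric isomorphism with the claimed inverse, and that the abstract inclusion arrow in the compactness argument is nothing but the canonical inclusion of distributions; once this bookkeeping is recorded, the remaining steps are routine applications of Lemma~\ref{lemma_H_dot_H_bar}, the order-reducing homeomorphism~\eqref{mapping_property_Lambda}, and Rellich--Kondrachov on a bounded interval.
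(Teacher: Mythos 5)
Your proposal is correct and, for items (i), (ii), (iii), and (v), follows essentially the same route as the paper: (i) via Lemma~\ref{lemma_H_dot_H_bar}, \eqref{guenstig}, and the homeomorphism \eqref{mapping_property_Lambda}; (iii) from the norm \eqref{norm_transmission_space} and the embedding $\overline{H}^{s_2-\mu}(\alpha,\beta)\hookrightarrow\overline{H}^{s_1-\mu}(\alpha,\beta)$; (ii) and (v) quoted from Grubb's results, exactly as the paper does. The one place where you genuinely diverge is (iv): the paper bootstraps through items (iii) and (i), picking an auxiliary index $\tilde s\in(0,\min\{\frac12,s\})$ so that $\mathfrak{r}H^{\mu(s)}([\alpha,\beta])\hookrightarrow\mathfrak{r}H^{\mu(\tilde s)}([\alpha,\beta])=\mathfrak{r}\dot H^{\tilde s}([\alpha,\beta])$ and then invokes compactness of $\dot H^{\tilde s}\hookrightarrow L^2$; you instead transport the compact embedding $\overline{H}^{s-\mu}(\alpha,\beta)\hookrightarrow\overline{H}^{-\mu}(\alpha,\beta)$ through the isometric isomorphism $\mathfrak{r}\Lambda_+^{(\mu)}$ and then apply $\mathfrak{r}$ on $H^{\mu(0)}([\alpha,\beta])=\dot H^0([\alpha,\beta])$. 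Both arguments are valid and rest on the same two ingredients (the order-reducing isomorphism and Rellich--Kondrachov on a bounded interval); your version avoids the auxiliary $\tilde s$ at the cost of having to check that the inclusion $H^{\mu(s)}([\alpha,\beta])\subset H^{\mu(0)}([\alpha,\beta])$ really intertwines with $\overline{H}^{s-\mu}(\alpha,\beta)\subset\overline{H}^{-\mu}(\alpha,\beta)$ under $\mathfrak{r}\Lambda_+^{(\mu)}$, which you correctly flag and which follows from \eqref{guenstig} and the consistency of $\mathfrak{e}$ across the scale. Note that both your argument and the paper's implicitly use the parameter range relevant to the application (in particular $-\mu\in(-\frac12,\frac12)$, so that item (i) applies at the lower end of the scale); this is satisfied for $\mu=a-1$ with $a\in(\frac12,1)$.
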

\begin{proof}
  (i) Recall that for $s - \mu \in (-\frac{1}{2}, \frac{1}{2})$ one has $\overline{H}^{s-\mu}(\alpha, \beta) = \mathfrak{r} \dot{H}^{s-\mu}([\alpha, \beta])$ by Lemma~\ref{lemma_H_dot_H_bar}. Therefore, it follows from Definition~\ref{hoermi} that
  $$ H^{\mu(s)}([\alpha, \beta])=\Lambda_+^{(-\mu)}\mathfrak{e} \overline{H}^{s-\mu}(\alpha, \beta) = \Lambda_+^{(-\mu)}\mathfrak{e}\mathfrak{r} \dot{H}^{s-\mu}([\alpha, \beta])$$
  and hence \eqref{guenstig} and~\eqref{mapping_property_Lambda} show $H^{\mu(s)}([\alpha, \beta]) = \dot{H}^s([\alpha, \beta])$. Since $\mathfrak{r} \dot{H}^0([\alpha, \beta]) = L^2(\alpha, \beta)$, this implies (for $s=0$) that $\mathfrak{r} H^{\mu(0)}([\alpha, \beta]) = L^2(\alpha, \beta)$.
   
  (ii) This follows from \cite[Theorem~5.4]{G15}, see also \cite[equation~(13)]{G20}.
  
  (iii) is an immediate consequence of~\eqref{norm_transmission_space} and the fact that $\overline{H}^{s_2-\mu}(\alpha, \beta)$ is boundedly embedded in $\overline{H}^{s_1-\mu}(\alpha, \beta)$, whenever $s_1 < s_2$.
  
  (iv) Choose $\widetilde{s} \in (0, \min \{ \frac{1}{2}, s\}  )$. Then, by~(iii) we have that $\mathfrak{r}H^{\mu(s)}([\alpha, \beta])$ is continuously embedded in $\mathfrak{r}H^{\mu(\widetilde{s})}([\alpha, \beta])$, which coincides, by~(i), with $\mathfrak{r} \dot{H}^{\widetilde{s}}([\alpha, \beta])$. Since $\widetilde{s} > 0$, the latter space is compactly embedded in $L^2(\alpha, \beta)$, which yields the claim in~(iii).
  
  
  (v) This claim follows from \cite[Proposition~4.1]{G15}.
\end{proof}

Item~(vi) in the previous lemma suggests that for $f \in H^{\mu(s)}([\alpha, \beta])$ the function $t^{-\mu} f$ admits Dirichlet traces, if $s - \mu > \frac{1}{2}$, and that $t^{-\mu} f$ admits Dirichlet and Neumann traces, whenever $s - \mu > \frac{3}{2}$. In the following lemma we see that this is indeed the case and we collect some properties of the trace maps. The claims about the existence of the trace maps can be deduced from \cite[equation~(3.12)]{G18-1}, the statements about their kernels are mentioned in \cite[equation~(2.12)]{G19}, see also \cite[Section~5]{G15} for details.

\begin{lemma} \label{lemma_traces}
  Let $\mu>-\frac{1}{2}$, $s -\mu >  -\frac{1}{2}$, and $H^{\mu(s)}([\alpha, \beta])$ be defined by~\eqref{eq:basic-transmi-def}. Then, the following is true:
  \begin{itemize}
    \item[(i)] If $s - \mu > \frac{1}{2}$, then the map 
    \begin{equation*}
      t^\mu \overline{C}^\infty([\alpha, \beta]) \ni f \mapsto \gamma_0^{(\mu)} f:= \begin{pmatrix} (t^{-\mu} f)(\alpha) \\ (t^{-\mu} f)(\beta) \end{pmatrix} 
    \end{equation*}
    has a continuous extension $\gamma_0^{(\mu)}: \mathfrak{r} H^{\mu(s)}([\alpha, \beta]) \rightarrow \mathbb{C}^2$. Moreover, 
    \begin{equation*}
      \mathfrak{r} H^{(\mu+1)(s)}([\alpha, \beta]) = \big\{ f \in \mathfrak{r} H^{\mu(s)}([\alpha, \beta]): \gamma_0^{(\mu)}(f) = 0 \big\}.
    \end{equation*}
    \item[(ii)] If $s - \mu > \frac{3}{2}$, then the map   
    \begin{equation*}
      t^\mu \overline{C}^\infty([\alpha, \beta]) \ni f \mapsto \gamma_1^{(\mu)} f:= \begin{pmatrix} (t^{-\mu} f)'(\alpha) \\ -(t^{-\mu} f)'(\beta) \end{pmatrix} 
    \end{equation*}
    has a continuous extension $\gamma_1^{(\mu)}: \mathfrak{r} H^{\mu(s)}([\alpha, \beta]) \rightarrow \mathbb{C}^2$. Moreover, 
    \begin{equation*}
      \mathfrak{r} H^{(\mu+2)(s)}([\alpha, \beta]) = \big\{ f \in \mathfrak{r} H^{\mu(s)}([\alpha, \beta]): \gamma_0^{(\mu)}(f) = \gamma_1^{(\mu)}(f) = 0 \big\}.
    \end{equation*}
  \end{itemize}
\end{lemma}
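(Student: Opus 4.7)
The overall strategy is to establish continuity of $\gamma_0^{(\mu)}$ and $\gamma_1^{(\mu)}$ on the dense subspace $t^\mu \overline{C}^\infty([\alpha,\beta])$ provided by Lemma~\ref{lemma_H_mu_s}(v) and then to extend them by continuity; the two kernel identifications follow by locating the appropriate dense subspace in the smaller $\mu+1$ or $\mu+2$ transmission space and using the already-established continuity of the trace maps to close the argument.

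For~(i), under the assumption $s-\mu>\frac{1}{2}$ (first supposing the non-resonance condition $s-\mu-\frac{1}{2}\notin\mathbb N$, the remaining cases then being handled by approximation via the continuous embedding in Lemma~\ref{lemma_H_mu_s}(iii)), the decomposition in Lemma~\ref{lemma_H_mu_s}(ii) allows one to write any $f\in\mathfrak r H^{\mu(s)}([\alpha,\beta])$ as $f=\mathfrak r h+t^\mu g$ with $h\in\dot H^s([\alpha,\beta])$ and $g\in\overline H^{s-\mu}(\alpha,\beta)$, with norms comparable to $\|f\|_{\mu(s)}$. Then $t^{-\mu}f=t^{-\mu}\mathfrak r h+g$, and since $s-\mu>\frac{1}{2}$ the Sobolev embedding yields $g\in\overline C([\alpha,\beta])$ with $|g(\alpha)|+|g(\beta)|\le C\|g\|_{\overline H^{s-\mu}(\alpha,\beta)}$, while a Hardy-type estimate on $\dot H^s([\alpha,\beta])$ shows that $t^{-\mu}\mathfrak r h$ is continuous up to the boundary with vanishing boundary values, controlled by $\|h\|_{\dot H^s([\alpha,\beta])}$. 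Summing these two bounds yields $|\gamma_0^{(\mu)}(f)|\le C\|f\|_{\mu(s)}$ on the dense subspace, whence $\gamma_0^{(\mu)}$ extends continuously to the whole space. For the kernel identity, the inclusion $\mathfrak r H^{(\mu+1)(s)}([\alpha,\beta])\subset\ker\gamma_0^{(\mu)}$ is immediate from density, since dense elements $t^{\mu+1}\varphi$ satisfy $t^{-\mu}(t^{\mu+1}\varphi)=t\varphi$, which vanishes at $\alpha$ and $\beta$. Conversely, if $\gamma_0^{(\mu)}(f)=0$ then $g(\alpha)=g(\beta)=0$, so one may factor $g=t\tilde g$ with $\tilde g\in\overline H^{s-\mu-1}(\alpha,\beta)$ (note $s-\mu-1>-\frac{1}{2}$), placing $t^\mu g=t^{\mu+1}\tilde g$ into $\mathfrak r H^{(\mu+1)(s)}([\alpha,\beta])$; the remaining $\dot H^s$-piece is absorbed using the homeomorphism~\eqref{mapping_property_Lambda} applied at the shifted index.

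For~(ii), under $s-\mu>\frac{3}{2}$ the argument parallels~(i): with the same decomposition one has $(t^{-\mu}f)' = (t^{-\mu}\mathfrak r h)' + g'$, and since $s-\mu-1>\frac{1}{2}$, $g'\in\overline H^{s-\mu-1}(\alpha,\beta)$ still has continuous boundary values controlled by the norm, while $(t^{-\mu}\mathfrak r h)'$ is handled by a Hardy-type estimate of one order higher, yielding continuity of $\gamma_1^{(\mu)}$. For the kernel description, simultaneous vanishing of $\gamma_0^{(\mu)}(f)$ and $\gamma_1^{(\mu)}(f)$ forces $g$ to vanish to second order at both endpoints, so $g=t^2 \tilde g$ with $\tilde g\in\overline H^{s-\mu-2}(\alpha,\beta)$, placing $f$ into $\mathfrak r H^{(\mu+2)(s)}([\alpha,\beta])$, while the $\dot H^s$-contribution again lies in this smaller space by~\eqref{mapping_property_Lambda}. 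The main obstacle throughout is making the Hardy-type estimates on $\dot H^s([\alpha,\beta])$ and the equivalence of norms in the decomposition of Lemma~\ref{lemma_H_mu_s}(ii) quantitatively precise; to avoid re-deriving these boundary regularity results, the cleanest route is to invoke the detailed computations in \cite[Section~5]{G15}, \cite[eq.~(3.12)]{G18-1}, and \cite[eq.~(2.12)]{G19}, which establish exactly the mapping properties and kernel descriptions asserted here.
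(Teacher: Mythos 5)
The paper does not actually prove this lemma: its ``proof'' consists entirely of pointers to the literature (existence of the traces from \cite[eq.~(3.12)]{G18-1}, the kernel descriptions from \cite[eq.~(2.12)]{G19}, and \cite[Section~5]{G15} for details). Your proposal ends in exactly the same place -- you explicitly invoke those same three sources for the quantitative estimates -- so in that sense it matches the paper. What you add is a sketch of what those references contain (decompose $f=\mathfrak{r}h+t^\mu g$, read off the traces from the $t^\mu\overline{H}^{s-\mu}$-part, kill the $\dot H^s$-part by a Hardy-type estimate), which is a reasonable reconstruction of Grubb's argument.

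If your sketch were meant to stand on its own, however, two steps would need more than the paper's Lemma~\ref{lemma_H_mu_s} provides. First, part (ii) of that lemma is only a set inclusion $H^{\mu(s)}\subset\dot H^s+\mathfrak{e}\,t^\mu\overline{H}^{s-\mu}$; your claim that the decomposition can be chosen ``with norms comparable to $\|f\|_{\mu(s)}$'' requires continuity of the embedding into the sum space (and then an open-mapping argument), which is not stated anywhere in the paper and is really part of \cite[Theorem~5.4]{G15}. Second, and more seriously, your converse kernel inclusion writes $t^\mu g=t^{\mu+1}\tilde g$ with $\tilde g\in\overline{H}^{s-\mu-1}(\alpha,\beta)$ and then asserts this lies in $\mathfrak{r}H^{(\mu+1)(s)}([\alpha,\beta])$. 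That is the \emph{reverse} inclusion $\mathfrak{e}\,t^{\mu+1}\overline{H}^{s-\mu-1}(\alpha,\beta)\subset H^{(\mu+1)(s)}([\alpha,\beta])$, which Lemma~\ref{lemma_H_mu_s}(ii) does not give (it only gives the containment in the other direction), and it is false to extract it from \eqref{mapping_property_Lambda} alone; this is precisely the structure theorem of \cite[Section~5]{G15}. Also note the factorization $g=t\tilde g$ (resp.\ $g=t^2\tilde g$) at a Sobolev regularity that is not an integer requires its own justification. None of this is fatal -- you correctly identify that the clean route is to cite Grubb, which is what the authors do -- but the intermediate sketch should not be presented as if it followed from the lemmas stated in the paper.
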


\section{Self-adjoint fractional Laplacians} \label{section_fractional}

Throughout this section, let $(\alpha, \beta) \subset \mathbb{R}$ be a bounded interval 
and $a \in (\frac{1}{2}, 1)$ be fixed. The \textit{restricted fractional Laplacian} is the operator 
\begin{equation*}
  \mathfrak{r} (-\Delta)^a \mathfrak{e},
\end{equation*}  
where the restriction operator $\mathfrak{r}$ is given by~\eqref{def_restriction_op}, $(-\Delta)^a$ is the fractional Laplacian on $\mathbb{R}$ defined by~\eqref{def_fract_Laplace_R}, and $\mathfrak{e}$ is the extension operator introduced in~\eqref{def_extension_op}. Hence,  $\mathfrak{r} (-\Delta)^a \mathfrak{e}$ acts on a generic $f \in L^2(\alpha, \beta)$ by first extending it by zero to $\mathfrak{e} f \in L^2(\mathbb{R})$, applying $(-\Delta)^a$ to it, and then restricting the distribution $(-\Delta)^a \mathfrak{e} f \in H^{-2a}(\mathbb{R})$ to $(\alpha, \beta)$. By construction, 
\begin{equation*} 
  \mathfrak{r} (-\Delta)^a \mathfrak{e} f(x) = c_{a} \lim_{\varepsilon \searrow 0} \int_{|x-y| > \varepsilon} \frac{(\mathfrak{e} f)(x)-(\mathfrak{e} f)(y)}{|x-y|^{1+2a}} d y, \quad x \in (\alpha, \beta),
\end{equation*}
if the integral converges, where
\begin{equation} \label{def_c_a}
  c_a := \frac{a 4^a}{\sqrt{\pi} \Gamma(1-a)} \Gamma\left( \frac{1+2a}{2} \right).
\end{equation} 
We first recall the definitions and some known results about the minimal, the maximal, and the 
Dirichlet realization of the restricted fractional Laplacian using suitable H\"ormander transmission spaces
as their natural domains; cf. Remark~\ref{maxrem}.
Afterwards, in Section~\ref{btsec} we introduce a boundary triplet for the restricted fractional Laplacian and show in the following subsections, how it can be used to deduce several properties of self-adjoint realizations of it. 

\subsection{The minimal, the maximal, and the Dirichlet realization of $(-\Delta)^a$ on $(\alpha, \beta)$} \label{section_minimal_maximal_operator}

In the following the H\"ormander transmission space $H^{(a-1)(2a)}([\alpha, \beta])$ will
play a particularly important role. Note that here $\mu=a-1$ and $s=2a$ in Definition~\ref{hoermi}
and hence $s-\mu=a + 1\in (\frac{3}{2}, 2)$. 
We will start with the \textit{maximal realization} $(-\Delta)_\max^{a}$ of the restricted fractional Laplacian
\begin{equation} \label{def_T_max}
  \begin{split}
  (-\Delta)_\max^{a} f &:= \mathfrak{r} (-\Delta)^a \mathfrak{e} f, \\ \dom (-\Delta)_\max^{a} &:= \mathfrak{r} H^{(a-1)(2a)}([\alpha, \beta]),
\end{split}
  \end{equation}
where $(-\Delta)^a$ is the fractional Laplacian defined on $L^2(\mathbb{R})$ and mapping to $H^{-2a}(\mathbb{R})$,
and $\mathfrak{e}$ and $\mathfrak{r}$ denote the extension and restriction operator defined by~\eqref{def_extension_op} and~\eqref{def_restriction_op}, respectively.
It follows from \cite[Theorem~7.1~$3^\circ$]{G15} that $(-\Delta)_\max^{a}$ is a well-defined operator in $L^2(\alpha, \beta)$, i.e. for all  $f \in \mathfrak{r} H^{(a-1)(2a)}([\alpha, \beta]) \subset L^2(\alpha, \beta)$ one has $(-\Delta)_\max^{a} f \in L^2(\alpha, \beta)$. The terminology \textit{maximal} will be explained later in Remark~\ref{maxrem}.

Next, we define the \textit{minimal realization} $(-\Delta)_\min^{a}$ of the restricted fractional Laplacian by
\begin{equation} \label{def_T_min}
\begin{split}
  (-\Delta)_\min^{a} f& := (-\Delta)_\max^{a} f,\\
  \dom (-\Delta)_\min^{a}& := \bigl\{ \dom (-\Delta)_\max^{a} : \gamma_0^{(a-1)}(f) = \gamma_1^{(a-1)}(f) = 0 \bigr\}.
\end{split}  
\end{equation}
Taking Lemma~\ref{lemma_traces} and Lemma~\ref{lemma_H_mu_s}~(i) into account (note that $2a-(a+1)\in (-\frac{1}{2},\frac{1}{2})$), one finds that 
\begin{equation} \label{dom_T_min}
  \dom (-\Delta)_\min^{a} = \mathfrak{r} H^{(a+1)(2a)}([\alpha, \beta]) = \mathfrak{r} \dot{H}^{2a}([\alpha, \beta]).
\end{equation}
In particular, $\mathfrak{e} \dom (-\Delta)_\min^{a} \subset H^{2a}(\mathbb{R})$ and thus, the application of $(-\Delta)^a$ in the definition of $(-\Delta)_\min^{a}$ results in an element in $L^2(\mathbb{R})$. Furthermore, as $\dot{H}^{2a}([\alpha, \beta])$ is a closed subspace of $H^{2a}(\mathbb{R})$, the operator $(-\Delta)_\min^{a}$ is closed and symmetric.
It will follow later in Theorem~\ref{theorem_bt} that $(-\Delta)_\max^{a} = ((-\Delta)_\min^{a})^*$.

An important role in our considerations is played by the \textit{Dirichlet realization} of $(-\Delta)_\max^{a}$, which is defined by
\begin{equation} \label{def_Dirichlet}
\begin{split}
  (-\Delta)^a_\textup{D} f&:= (-\Delta)_\max^{a} f, \\
  \dom (-\Delta)^a_\textup{D} &:= \{ f \in \dom (-\Delta)_\max^{a}: \gamma_0^{(a-1)}(f) = 0 \}.
  \end{split}
\end{equation}
Note that Lemma~\ref{lemma_traces}~(i) implies 
$$
\dom (-\Delta)^a_\textup{D} = H^{a(2a)}([\alpha, \beta]).
$$
It will turn out later in Proposition~\ref{proposition_Dirichlet_Friedrichs} that $(-\Delta)^a_\textup{D}$ coincides with the Friedrichs extension of the minimal realization $(-\Delta)_\min^{a}$.
In the next lemma known basic properties of $(-\Delta)^a_\textup{D}$ are collected:

\begin{lemma} \label{lemma_Dirichlet_basic}
  The Dirichlet realization $(-\Delta)^a_\textup{D}$ is a positive self-adjoint operator in $L^2(\alpha, \beta)$ with purely discrete spectrum accumulating to $+\infty$.
\end{lemma}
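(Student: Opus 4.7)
The plan is to prove the three assertions — self-adjointness, strict positivity, and pure discreteness of the spectrum — by combining a quadratic form argument with the elliptic regularity theory for the fractional Dirichlet problem developed by Grubb.

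\textbf{Step 1 (Form set-up and positivity).} Introduce on $L^2(\alpha,\beta)$ the non-negative sesquilinear form
\[
  \mathfrak{a}[f,g]:=\bigl((-\Delta)^{a/2}\mathfrak{e} f,(-\Delta)^{a/2}\mathfrak{e} g\bigr)_{L^2(\mathbb{R})},\qquad \dom\mathfrak{a}:=\mathfrak{r}\dot{H}^{a}([\alpha,\beta]).
\]
By Lemma~\ref{lemma_H_mu_s}~(i) applied with $\mu=s=a$ (so that $s-\mu=0\in(-\tfrac{1}{2},\tfrac{1}{2})$), this form domain coincides with $\mathfrak{r} H^{a(a)}([\alpha,\beta])$. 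Since $\dot{H}^{a}([\alpha,\beta])$ is a closed subspace of $H^{a}(\mathbb{R})$, the form $\mathfrak{a}$ is densely defined, symmetric, non-negative, and closed. A fractional Friedrichs--Poincar\'e inequality for functions supported in the bounded set $[\alpha,\beta]$ provides a constant $\kappa>0$ such that $\mathfrak{a}[f,f]\geq\kappa\|f\|^{2}_{L^{2}(\alpha,\beta)}$, so $\mathfrak{a}$ is strictly positive.

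\textbf{Step 2 (Associated operator = Dirichlet realization).} The first representation theorem produces a unique positive self-adjoint operator $A$ in $L^{2}(\alpha,\beta)$ with $\dom A\subset\dom\mathfrak{a}$ and $(Af,g)_{L^{2}(\alpha,\beta)}=\mathfrak{a}[f,g]$ for $f\in\dom A$, $g\in\dom\mathfrak{a}$. Using the boundedness of $(-\Delta)^{a}\colon L^{2}(\mathbb{R})\to H^{-2a}(\mathbb{R})$ and the duality identity
\[
  \langle(-\Delta)^{a}\mathfrak{e} f,\mathfrak{e} g\rangle_{H^{-a},H^{a}}=\bigl((-\Delta)^{a/2}\mathfrak{e} f,(-\Delta)^{a/2}\mathfrak{e} g\bigr)_{L^{2}(\mathbb{R})},
\]
one reads off $Af=\mathfrak{r}(-\Delta)^{a}\mathfrak{e} f$ for all $f\in\dom A$, and conversely any $f\in\mathfrak{r} H^{a(2a)}([\alpha,\beta])$ lies in $\dom A$ via the same identity (together with Lemma~\ref{lemma_H_mu_s}~(iii) to see that $\mathfrak{r} H^{a(2a)}\subset\mathfrak{r} H^{a(a)}=\dom\mathfrak{a}$). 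To finish the identification $A=(-\Delta)^{a}_{\textup{D}}$ one invokes the elliptic regularity theorem of Grubb \cite{G14,G15} (see also \cite{ROS14-1,ROS14-2}): every $u\in\dot{H}^{a}([\alpha,\beta])$ with $\mathfrak{r}(-\Delta)^{a}\mathfrak{e} u\in L^{2}(\alpha,\beta)$ belongs to $\mathfrak{r} H^{a(2a)}([\alpha,\beta])$. Hence $\dom A=\mathfrak{r} H^{a(2a)}([\alpha,\beta])=\dom(-\Delta)^{a}_{\textup{D}}$, which simultaneously yields self-adjointness and strict positivity of $(-\Delta)^{a}_{\textup{D}}$.

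\textbf{Step 3 (Discrete spectrum).} By Lemma~\ref{lemma_H_mu_s}~(iv) applied to $\mu=a$ and $s=2a$, the space $\mathfrak{r} H^{a(2a)}([\alpha,\beta])$ is compactly embedded in $L^{2}(\alpha,\beta)$. Together with the strict positivity from Step~2, this implies that $(-\Delta)^{a}_{\textup{D}}$ has compact resolvent, so its spectrum is purely discrete and can accumulate only at $+\infty$.

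The single non-formal ingredient is the identification $\dom A=\mathfrak{r} H^{a(2a)}([\alpha,\beta])$ in Step~2 — the $H^{2a}$-regularity up to the H\"ormander transmission correction — which is the substance of Grubb's treatment of the fractional Dirichlet problem. Everything else is standard quadratic form theory combined with the compact embedding supplied by Lemma~\ref{lemma_H_mu_s}.
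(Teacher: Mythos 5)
Your proposal is correct, but it takes a genuinely different route from the paper. The paper's own proof is essentially a citation: self-adjointness and positivity of $(-\Delta)^a_\textup{D}$ are taken from Grubb \cite[Section~6.2]{G16}, and discreteness then follows from the compact embedding of $\dom (-\Delta)^a_\textup{D} = \mathfrak{r}H^{a(2a)}([\alpha, \beta])$ into $L^2(\alpha,\beta)$ (Lemma~\ref{lemma_H_mu_s}). You instead rebuild the operator from the fractional Dirichlet form on $\mathfrak{r}\dot{H}^a([\alpha,\beta])$, obtain a strictly positive self-adjoint form operator $A$ via the representation theorem and a fractional Poincar\'e inequality, and then identify $\dom A$ with $\mathfrak{r}H^{a(2a)}([\alpha,\beta])$ using Grubb's elliptic regularity theorem \cite{G14,G15} — which is indeed the genuinely non-formal ingredient, and the same one underlying the reference the paper cites. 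Your argument is sound: the duality identity, the inclusion $\mathfrak{r}H^{a(2a)}\subset\mathfrak{r}H^{a(a)}=\mathfrak{r}\dot H^a$ from Lemma~\ref{lemma_H_mu_s}, and the final compact-resolvent step all check out (note the compact embedding is item (iv) of that lemma, and passing from $\dom A=\mathfrak{r}H^{a(2a)}$ as sets to boundedness of the resolvent into $H^{a(2a)}$ uses the closed graph theorem, which you should make explicit). What your route buys is a quantitative positive lower bound and, as a by-product, essentially the form characterization that the paper only establishes later in Corollary~\ref{corollary_quadratic_form} via the Friedrichs-extension identification of Proposition~\ref{proposition_Dirichlet_Friedrichs}; what the paper's route buys is brevity here and the opportunity to derive those facts later as illustrations of the boundary-triplet and Weyl-function machinery. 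Be aware that your Step~2 quietly supersedes part of that later development, so if it were inserted into the paper one would want to reconcile it with Proposition~\ref{proposition_Dirichlet_Friedrichs} to avoid redundancy.
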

\begin{proof}
  The fact that $(-\Delta)^a_\textup{D}$ is self-adjoint and positive is mentioned in \cite[Section~6.2]{G16}; it is clear that $(-\Delta)^a_\textup{D}$ 
  is unbounded and hence the spectrum must accumulate to $+\infty$.
  Moreover, as $\dom (-\Delta)^a_\textup{D} = \mathfrak{r}H^{a(2a)}([\alpha, \beta])$ is compactly embedded in $L^2(\alpha, \beta)$, see Lemma~\ref{lemma_H_mu_s}~(iii), it follows that the spectrum of $(-\Delta)^a_\textup{D}$ consists of discrete eigenvalues with finite multiplicities. 
\end{proof}

As mentioned above, we will show in Proposition~\ref{proposition_Dirichlet_Friedrichs} that $(-\Delta)^a_\textup{D}$ coincides with the Friedrichs extension of the minimal realization $(-\Delta)_\min^{a}$. This operator has been investigated in \cite{K12}, where, in particular, a two-term Weyl-type asymptotic law for the eigenvalues was found and according to
\cite[Proposition 3]{K12} all eigenvalues of $(-\Delta)^a_\textup{D}$ are simple.
However, for our further analysis, the results stated in Lemma~\ref{lemma_Dirichlet_basic} are sufficient.

%
%
%
%
%

\subsection{The boundary triplet and self-adjoint extensions}\label{btsec}

Let $(-\Delta)_\max^{a}$ be the maximal realization of $(-\Delta)^a$ on $[\alpha, \beta]$ defined in~\eqref{def_T_max} and let $t$ be the distance function introduced in~\eqref{eq:dist-t}. Define the boundary mappings
\begin{equation*}
  \Upsilon_0^{(a)}, \Upsilon_1^{(a)}: \dom (-\Delta)_\max^{a} = \mathfrak{r} H^{(a-1)(2a)}([\alpha, \beta]) \rightarrow \mathbb{C}^2
\end{equation*}
by
\begin{equation} \label{def_Ups0}
  \Upsilon_0^{(a)} f := \Gamma(a)\gamma_0^{(a-1)} f=\Gamma(a) \begin{pmatrix} (t^{1-a}f)(\alpha) \\ (t^{1-a} f)(\beta) \end{pmatrix}
\end{equation}  
and 
  \begin{equation} \label{def_Ups1}
  \Upsilon_1^{(a)} f := \Gamma(a+1)\gamma_1^{(a-1)} f =\Gamma(a+1) \begin{pmatrix} (t^{1-a}f)'(\alpha) \\ -(t^{1-a} f)'(\beta) \end{pmatrix},
\end{equation}
where $\Gamma(a)$ and $\Gamma(a+1)$ are evaluations of Euler's gamma function.
Observe that $\Upsilon_0^{(a)}, \Upsilon_1^{(a)}$ are well-defined by Lemma~\ref{lemma_traces}. 
In the next theorem, we verify that $\{ \mathbb{C}^2, \Upsilon_0^{(a)}, \Upsilon_1^{(a)} \}$ is a boundary triplet for $(-\Delta)_\max^{a}$. Recall that $(-\Delta)_\min^{a}$ and $(-\Delta)^a_\textup{D}$ are the minimal and the Dirichlet realization of $(-\Delta)^a$ defined in~\eqref{def_T_min} and~\eqref{def_Dirichlet}, respectively.

\begin{theorem}\label{theorem_bt}
The minimal realization $(-\Delta)_\min^{a}$ of the restricted fractional Laplacian is a densely defined closed symmetric operator in $L^2(\alpha, \beta)$
and its adjoint is the maximal realization $(-\Delta)_\max^{a}$, that is, 
\begin{equation*}
 \bigl((-\Delta)_\min^{a}\bigr)^* = (-\Delta)_\max^{a}\quad\text{and}\quad  (-\Delta)_\min^{a} = \bigl((-\Delta)_\max^{a}\bigr)^*.
\end{equation*}
Furthermore, $\{ \mathbb{C}^2, \Upsilon_0^{(a)}, \Upsilon_1^{(a)} \}$ 
 is a boundary triplet for $(-\Delta)_\max^{a}$ such that
  \begin{equation} \label{equation_A_0}
    (-\Delta)_\max^{a} \upharpoonright \ker \Upsilon_0^{(a)} = (-\Delta)^a_{\textup{D}}.
  \end{equation}
\end{theorem}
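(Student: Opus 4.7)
The plan is to apply Theorem~\ref{inv} with $T=(-\Delta)_\max^{a}$, Hilbert space $\mathcal{G}=\mathbb{C}^2$, and boundary maps $\Upsilon_0^{(a)},\Upsilon_1^{(a)}$ as defined in \eqref{def_Ups0}--\eqref{def_Ups1}. Once the three conditions (i)--(iii) of Theorem~\ref{inv} are checked, the theorem automatically produces the boundary triplet, identifies the pre-minimal operator $T\upharpoonright(\ker\Upsilon_0^{(a)}\cap\ker\Upsilon_1^{(a)})$ as a densely defined closed symmetric operator whose adjoint equals $T$, and verifies \eqref{equation_A_0}. The remaining task is then to match $T\upharpoonright(\ker\Upsilon_0^{(a)}\cap\ker\Upsilon_1^{(a)})$ with $(-\Delta)_\min^{a}$; but by the definitions \eqref{def_T_min} of $(-\Delta)_\min^{a}$ and Lemma~\ref{lemma_traces}, the two operators coincide.

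For condition~(i), the abstract Green identity
\[
\bigl((-\Delta)_\max^{a}f,g\bigr)-\bigl(f,(-\Delta)_\max^{a}g\bigr)=\bigl(\Upsilon_1^{(a)}f,\Upsilon_0^{(a)}g\bigr)-\bigl(\Upsilon_0^{(a)}f,\Upsilon_1^{(a)}g\bigr),\qquad f,g\in\mathfrak{r}H^{(a-1)(2a)}([\alpha,\beta]),
\]
is precisely the half-order integration-by-parts formula of Grubb from~\cite{G18-1}. The normalization by the Gamma factors $\Gamma(a)$ and $\Gamma(a+1)$ built into $\Upsilon_0^{(a)}$ and $\Upsilon_1^{(a)}$ is chosen exactly so that Grubb's identity takes this clean symplectic form with no extra multiplicative constants.

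For condition~(ii), density of $\ker\Upsilon_0^{(a)}\cap\ker\Upsilon_1^{(a)}$ in $L^2(\alpha,\beta)$ is immediate: by Lemma~\ref{lemma_traces} this intersection equals $\mathfrak{r}H^{(a+1)(2a)}([\alpha,\beta])=\mathfrak{r}\dot{H}^{2a}([\alpha,\beta])$, which contains $\mathfrak{r}\mathfrak{e}C_0^\infty(\alpha,\beta)=C_0^\infty(\alpha,\beta)$. For surjectivity of $(\Upsilon_0^{(a)},\Upsilon_1^{(a)})^\top$ onto $\mathbb{C}^2\times\mathbb{C}^2$, given arbitrary $(c_0,c_1)\in\mathbb{C}^2\times\mathbb{C}^2$, pick any polynomial $g\in\overline{C}^\infty([\alpha,\beta])$ with $g(\alpha),g(\beta),g'(\alpha),-g'(\beta)$ prescribed so that $\Gamma(a)(g(\alpha),g(\beta))^\top=c_0$ and $\Gamma(a+1)(g'(\alpha),-g'(\beta))^\top=c_1$. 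By Lemma~\ref{lemma_H_mu_s}~(v) the function $f:=t^{a-1}g$ belongs to $\mathfrak{r}H^{(a-1)(2a)}([\alpha,\beta])=\dom(-\Delta)_\max^{a}$, and since $t^{1-a}f=g$, Lemma~\ref{lemma_traces} yields exactly $\Upsilon_0^{(a)}f=c_0$ and $\Upsilon_1^{(a)}f=c_1$.

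Condition~(iii) is obtained directly from the definition \eqref{def_Dirichlet}: $T\upharpoonright\ker\Upsilon_0^{(a)}$ is precisely the Dirichlet realization $(-\Delta)^a_\textup{D}$, which is self-adjoint by Lemma~\ref{lemma_Dirichlet_basic}; this simultaneously gives \eqref{equation_A_0}. With (i)--(iii) in place, Theorem~\ref{inv} finishes the proof. The main obstacle is condition~(i): it is not elementary and relies entirely on Grubb's weighted Green formula in~\cite{G18-1}, since the classical integration-by-parts mechanism of the Laplacian is not available for the nonlocal operator $(-\Delta)^a$; the precise Gamma-function normalizations in \eqref{def_Ups0}--\eqref{def_Ups1} are dictated by the asymptotics at the boundary of functions in $H^{(a-1)(2a)}([\alpha,\beta])$ that enter Grubb's formula.
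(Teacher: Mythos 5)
Your proposal is correct and follows essentially the same route as the paper: both apply Theorem~\ref{inv} to $T=(-\Delta)_\max^{a}$, verify the Green identity via Grubb's formula from \cite{G18-1}, establish surjectivity with the ansatz $f=t^{a-1}g$ for a polynomial $g$ with prescribed boundary data, obtain density from $\ker\Upsilon_0^{(a)}\cap\ker\Upsilon_1^{(a)}=\mathfrak{r}\dot{H}^{2a}([\alpha,\beta])$, and use the self-adjointness of $(-\Delta)^a_{\textup{D}}$ from Lemma~\ref{lemma_Dirichlet_basic} for condition~(iii). No substantive differences.
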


\begin{remark}\label{maxrem}  
(i) The fact that $((-\Delta)_\min^{a})^* = (-\Delta)_\max^{a}$ justifies the terminology \textit{maximal realization of the fractional Laplacian in $L^2(\alpha, \beta)$} for $(-\Delta)_\max^a$. To see this, we note first that $(-\Delta)^a$ is bounded from $H^{2a}(\mathbb{R})$ to $L^2(\mathbb{R})$ and hence, by self-adjointness and duality $(-\Delta)^a$ has an extension that is bounded from $L^2(\mathbb{R})$ to $H^{-2a}(\mathbb{R})$. Thus, for any $f \in L^2(\alpha, \beta)$ one has $\mathfrak{r} (-\Delta)^a \mathfrak{e} f \in \overline{H}^{-2a}(\alpha, \beta)$ and for all $g$ in the dual space 
$$(\overline{H}^{-2a}(\alpha, \beta))^* = \mathfrak{r} \dot{H}^{2a}([\alpha, \beta])= \dom (-\Delta)^a_\min$$ 
(see, e.g., \cite[Theorem~3.30~(i)]{M00} for the first identity above) one has 
\begin{equation*}
  \bigl( \mathfrak{r} (-\Delta)^a \mathfrak{e} f, g \bigr)_{\overline{H}^{-2a}(\alpha, \beta) \times (\overline{H}^{-2a}(\alpha, \beta))^*} = \bigl( f, \mathfrak{r} (-\Delta)^a \mathfrak{e} g \bigr)_{L^2(\alpha, \beta)} = \bigl( f, (-\Delta)^a_\min g \bigr)_{L^2(\alpha, \beta)},
\end{equation*}
where 
the (sesquilinear) duality product in the pairing $\overline{H}^{-2a}(\alpha, \beta) \times (\overline{H}^{-2a}(\alpha, \beta))^*$ was used. 
From this it follows that $f \in L^2(\alpha, \beta)$ satisfies $\mathfrak{r} (-\Delta)^a \mathfrak{e} f \in L^2(\alpha, \beta)$ if and only if 
$f\in \dom ((-\Delta)_\min^{a})^* = \dom (-\Delta)_\max^{a}$ and $$\mathfrak{r} (-\Delta)^a \mathfrak{e} f = ((-\Delta)^a_\min)^* f = (-\Delta)_\max^a f.$$

%

(ii) We also note that by~\eqref{def_T_min} 
  \begin{equation*} 
    (-\Delta)_\min^{a}=(-\Delta)_\max^{a} \upharpoonright \big( \ker \Upsilon_0^{(a)} \cap \ker \Upsilon_1^{(a)} \big).
  \end{equation*}
  This fact will be seen independently also in the proof of Theorem~\ref{theorem_bt} in~\eqref{equation_S_restriction} below.
  
  (iii) For $a = 1$ the definition of $\Upsilon_0^{(a)}$ and $\Upsilon_1^{(a)}$ coincides formally with the one in Example~\ref{superklar}, and hence the construction is in accordance with the one for the classical Laplacian; note also that $\mathfrak{r} H^{0(2)}([\alpha, \beta])=\overline{H}^{2}(\alpha, \beta)=W^{2,2}(\alpha,\beta)$.
\end{remark}

\begin{proof}[Proof of Theorem~\ref{theorem_bt}]
  We apply Theorem~\ref{inv} to show the claims; in the following the maximal realization $(-\Delta)_\max^{a}$ plays the role of the operator $T$ in Theorem~\ref{inv}. 
  If $\gamma_0$ denotes the Dirichlet trace and $\gamma_1 f = \gamma_0 (\partial_\tau  f )$ with $\tau$ being the normal coordinate, 
  then by \cite[Theorem 4.4]{G18-1} we have for all $f, g \in H^{(a-1)(2a)}([\alpha, \beta]) = \dom (-\Delta)_\max^{a}$ 
  \begin{equation*}
    \begin{split}
      \big( (-\Delta)_\max^{a} f, g \big)_{L^2(\alpha, \beta)} &- \big( f, (-\Delta)_\max^{a} g \big)_{L^2(\alpha, \beta)} \\
      &= \Gamma(a+1) \Gamma(a) \big( \gamma_1 (t^{1-a} f), \gamma_0 (t^{1-a} g) \big)_{\mathbb{C}^2} \\
      &\qquad - \Gamma(a+1) \Gamma(a) \big( \gamma_0 (t^{1-a} f), \gamma_1 (t^{1-a} g) \big)_{\mathbb{C}^2}.
    \end{split}
  \end{equation*}
  The latter equation can be simplified to
  \begin{equation*}
    \begin{split}
      \big( (-\Delta&)_\max^{a} f, g \big)_{L^2(\alpha, \beta)} - \big( f, (-\Delta)_\max^{a} g \big)_{L^2(\alpha, \beta)} \\
      &= \Gamma(a+1) \Gamma(a) \big[(t^{1-a}f)'(\alpha) \overline{(t^{1-a}g)(\alpha)} - (t^{1-a}f)'(\beta) \overline{(t^{1-a}g)(\beta)} \\
      &\qquad \qquad \qquad \quad  - (t^{1-a}f)(\alpha) \overline{(t^{1-a}g)'(\alpha)} + (t^{1-a}f)(\beta) \overline{(t^{1-a}g)'(\beta)} \big] \\
      &=  \big( \Upsilon_1^{(a)} f, \Upsilon_0^{(a)} g \big)_{\mathbb{C}^2} - \big( \Upsilon_0^{(a)} f, \Upsilon_1^{(a)} g \big)_{\mathbb{C}^2}.
    \end{split}
  \end{equation*}
  Therefore, the abstract Green's identity in Theorem~\ref{inv}~(i) is valid.
  
  Next, for $(c_1,c_2,c_3,c_4) \in \mathbb{C}^4$ choose a polynomial $g$ such that 
  \begin{equation*}
    g(\alpha) = c_1, \quad g(\beta) = c_2, \quad g'(\alpha) = c_3, \quad g'(\beta) = -c_4.
  \end{equation*}
  Then the function
  \begin{equation*} 
    f := t^{a-1} g \in t^{a-1} \overline{C}^\infty([\alpha, \beta]) \subset H^{(a-1)(2a)}([\alpha, \beta]) = \dom (-\Delta)_\max^{a}
  \end{equation*}
  satisfies $\Upsilon_0^{(a)} f = (c_1, c_2)$ and $\Upsilon_1^{(a)} f = (c_3, c_4)$. Since $(c_1,c_2,c_3,c_4) \in \mathbb{C}^4$ was arbitrary, it follows that $(\Upsilon_0^{(a)}, \Upsilon_1^{(a)}): \dom (-\Delta)_\max^{a} \rightarrow \mathbb{C}^4$ is surjective. Moreover, according to \eqref{def_T_min}-\eqref{dom_T_min} we have 
  \begin{equation*}
    \dom (-\Delta)^a_\max \upharpoonright \bigl( \ker \Upsilon_0^{(a)} \cap \ker \Upsilon_1^{(a)} \bigr)= \mathfrak{r} \dot{H}^{2a}([\alpha, \beta]),
  \end{equation*}
  which is dense in $L^2(\alpha, \beta)$. Therefore, also assumption~(ii) in Theorem~\ref{inv} is fulfilled.
  
  Finally, it is clear by the definition of the Dirichlet realization $(-\Delta)^a_{\textup{D}}$ in~\eqref{def_Dirichlet} that~\eqref{equation_A_0} is true. This shows, in particular, with Lemma~\ref{lemma_Dirichlet_basic} that $(-\Delta)^a_\max \upharpoonright \ker \Upsilon_0^{(a)}$ is self-adjoint, i.e. also the assumption in~(iii) of Theorem~\ref{inv} is satisfied. Therefore, it follows from Theorem~\ref{inv} that
\begin{equation} \label{equation_S_restriction}
  S:=(-\Delta)_\max^{a}\upharpoonright\bigl(\ker\Upsilon_0^{(a)}\cap\ker\Upsilon_1^{(a)}\bigr)
\end{equation}
is a densely defined closed
symmetric operator in $L^2(\alpha, \beta)$ such that $S^*=(-\Delta)_\max^{a}$ and
$\{\mathbb{C}^2,\Upsilon_0^{(a)},\Upsilon_1^{(a)}\}$ is a boundary triplet for $S^*$ with
$A_0=S^*\upharpoonright\ker\Upsilon_0^{(a)}$. Note that $S=(-\Delta)_\min^{a}$ by \eqref{def_T_min}.
\end{proof}

As a first application of Theorem~\ref{theorem_bt} we can characterize  extensions of $(-\Delta)_\min^{a}$, which are, for matrices  $\mathcal A,\mathcal B\in \mathbb{C}^{2 \times 2}$, of the form 
\begin{equation} \label{def_extension_fractional_Laplace}
  (-\Delta)_{\cA, \cB}^a := (-\Delta)_\max^{a} \upharpoonright \ker (\mathcal B\Upsilon_0^{(a)} -\mathcal A\Upsilon_1^{(a)}).
\end{equation}
The operator $(-\Delta)_{\cA, \cB}^a$ can be described more explicitly by
\begin{equation*} 
\begin{split}
(-\Delta)_{\cA, \cB}^a f &= (-\Delta)_\max^{a} f, \\
\dom (-\Delta)_{\cA, \cB}^a&=\bigl\{f\in\dom (-\Delta)_\max^{a}: \mathcal B\Upsilon_0^{(a)} f=\mathcal A\Upsilon_1^{(a)} f\bigr\}.
\end{split}
\end{equation*}
We will be particularly interested in self-adjoint realizations $(-\Delta)_{\cA, \cB}^a$. For this, we make the following assumption:

\begin{assumption} \label{assumption_parameter}
Let $\mathcal A,\mathcal B\in \mathbb{C}^{2 \times 2}$ be such that
\begin{equation*}
\cA\cB^*= \cB\cA^*
\quad \mbox{and} \quad (\cA \, | \, \cB) \in \mathbb{C}^{2 \times 4} 
\text{ has rank } 2.
\end{equation*}
\end{assumption}

The following statement is now an immediate consequence of Proposition~\ref{abcd} and the fact that $\dom (-\Delta)^a_\max = \mathfrak{r} H^{(a-1)(2a)}([\alpha, \beta])$ is compactly embedded in $L^2(\alpha, \beta)$; cf. Lemma~\ref{lemma_H_mu_s}~(iv).

\begin{proposition}\label{proposition_self_adjoint}
Let $\{\mathbb{C}^2,\Upsilon_0^{(a)},\Upsilon_1^{(a)}\}$ be the boundary triplet for $(-\Delta)_\max^{a}$ defined in~\eqref{def_Ups0}-\eqref{def_Ups1}. Then, an operator $A$ satisfying $(-\Delta)_\min^{a} \subset A \subset (-\Delta)_\max^{a}$ is self-adjoint in $L^2(\alpha, \beta)$ if and only if there exist matrices  $\mathcal A,\mathcal B\in \mathbb{C}^{2 \times 2}$ that satisfy Assumption~\ref{assumption_parameter} such that $A$ is of the form $A = (-\Delta)_{\cA, \cB}^a$ as in~\eqref{def_extension_fractional_Laplace}. Moreover, the spectrum of any self-adjoint extension of $(-\Delta)^a_\min$ is purely discrete.
\end{proposition}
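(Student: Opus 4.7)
The plan is to combine the boundary-triplet framework from Theorem~\ref{theorem_bt} with the abstract parametrization in Proposition~\ref{abcd}, and then read off the discreteness of the spectrum from the compactness assertion in Lemma~\ref{lemma_H_mu_s}~(iv).

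First I would use Theorem~\ref{theorem_bt}, which identifies $\{\mathbb{C}^2,\Upsilon_0^{(a)},\Upsilon_1^{(a)}\}$ as a boundary triplet for $(-\Delta)_\max^a=\bigl((-\Delta)_\min^a\bigr)^*$. The general theory recalled in~\eqref{atheta}--\eqref{atheta2} therefore produces a bijective correspondence $\Theta\mapsto A_\Theta$ between linear relations $\Theta$ in $\mathbb{C}^2$ and intermediate extensions $A$ of $(-\Delta)_\min^a$, and $A_\Theta$ is self-adjoint in $L^2(\alpha,\beta)$ precisely when $\Theta$ is self-adjoint in $\mathbb{C}^2$. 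Since $\mathbb{C}^2$ is finite-dimensional, every self-adjoint relation $\Theta\subset\mathbb{C}^2\times\mathbb{C}^2$ satisfies $\mathbb{C}\setminus\mathbb{R}\subset\rho(\Theta)$, so the hypothesis $\rho(\Theta)\neq\emptyset$ in Proposition~\ref{abcd} is automatic.

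Next I would invoke Proposition~\ref{abcd} to rewrite any such $\Theta$ in the form~\eqref{nanu33} with suitable $\cA,\cB\in\mathbb{C}^{2\times 2}$, and I would use the comment after Proposition~\ref{abcd} to translate the abstract self-adjointness criterion~\eqref{nanu2} into its finite-dimensional form, namely $\cA\cB^*=\cB\cA^*$ together with the full-rank condition on $(\cA\,|\,\cB)\in\mathbb{C}^{2\times 4}$; this is exactly Assumption~\ref{assumption_parameter}. Representation~\eqref{nochamal} then identifies the corresponding self-adjoint extension with
\begin{equation*}
A_\Theta=(-\Delta)^a_\max\!\upharpoonright\!\ker(\cA\Upsilon_1^{(a)}-\cB\Upsilon_0^{(a)})=(-\Delta)^a_{\cA,\cB},
\end{equation*}
which gives both implications of the first claim.

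For the spectral statement, Lemma~\ref{lemma_H_mu_s}~(iv) applied with $\mu=a-1$ and $s=2a>0$ shows that $\dom(-\Delta)^a_\max=\mathfrak{r}H^{(a-1)(2a)}([\alpha,\beta])$ embeds compactly into $L^2(\alpha,\beta)$. Every self-adjoint extension $A$ of $(-\Delta)^a_\min$ with $A\subset(-\Delta)^a_\max$ has $\dom A\subset\dom(-\Delta)^a_\max$, and on $\dom A$ the graph norm of $A$ dominates the graph norm of $(-\Delta)^a_\max$, so the inclusion $(\dom A,\|\cdot\|_A)\hookrightarrow L^2(\alpha,\beta)$ factors through a compact embedding. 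Hence $(A-\lambda)^{-1}$ is compact on $L^2(\alpha,\beta)$ for $\lambda\in\rho(A)$, and $\sigma(A)=\sigma_{\mathrm d}(A)$. I expect no serious obstacle; the only point worth double-checking is the compactness argument above, for which a clean alternative is Krein's formula~\eqref{kreini2} combined with Lemma~\ref{lemma_Dirichlet_basic}: the resolvent difference of $A$ and $(-\Delta)^a_\textup{D}$ is of finite rank, and $(-\Delta)^a_\textup{D}$ already has compact resolvent.
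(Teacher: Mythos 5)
Your proof is correct and follows essentially the same route as the paper, which derives the parametrization as an immediate consequence of Proposition~\ref{abcd} applied to the boundary triplet from Theorem~\ref{theorem_bt} and obtains the discreteness of the spectrum from the compact embedding of $\dom(-\Delta)^a_\max=\mathfrak{r}H^{(a-1)(2a)}([\alpha,\beta])$ into $L^2(\alpha,\beta)$ in Lemma~\ref{lemma_H_mu_s}~(iv). Your only glossed-over point is that the compact embedding is stated with respect to the transmission-space norm rather than the graph norm (they are equivalent by the open mapping theorem, since $(-\Delta)^a_\max$ is bounded from the transmission space to $L^2(\alpha,\beta)$), and your fallback via Krein's formula and the finite-rank resolvent difference with $(-\Delta)^a_\textup{D}$ closes that gap cleanly.
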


\subsection{The $\gamma$-field and Weyl function}  \label{section_Weyl_function}

In this section we compute the $\gamma$-field and Weyl function for the boundary triplet $\{ \mathbb{C}^2, \Upsilon_0^{(a)}, \Upsilon_1^{(a)} \}$, see Section~\ref{section_Weyl_function_abstract} for the general definition of these objects. While the $\gamma$-field and Weyl function can only be computed explicitly for the spectral point $\lambda = 0$, see Proposition~\ref{proposition_M_0}, we can express the $\gamma$-field and Weyl function for all $\lambda \in \rho((-\Delta)_\textup{D}^a)$ in terms of the resolvent  $((-\Delta)_\textup{D}^a-\lambda)^{-1}$ in Corollary~\ref{corollary_gamma_Weyl}. Finally, in Proposition~\ref{proposition_Kreinformula_0} we use these operators to conclude a Krein type resolvent formula for a generic self-adjoint realization $(-\Delta)_{\cA, \cB}^a$ as in~\eqref{def_extension_fractional_Laplace}.

\begin{proposition} \label{proposition_M_0}
  Let $\{ \mathbb{C}^2, \Upsilon_0^{(a)}, \Upsilon_1^{(a)} \}$ be the boundary triplet for $(-\Delta)_\max^a$ defined in~\eqref{def_Ups0}-\eqref{def_Ups1} and define the functions $v_1, v_2 \in H^{(a-1)(2a)}([\alpha, \beta])$ by
  \begin{equation} \label{def_v_1_v_2}
  \begin{split}
    v_1(x) &= \left( 1 - \frac{4}{(\beta-\alpha)^2} \left( x - \frac{\alpha + \beta}{2} \right)^2 \right)^{a-1}, \\
    v_2(x) &= \frac{2}{\beta - \alpha} \left( x - \frac{\alpha + \beta}{2} \right) v_1(x),\qquad \quad x \in (\alpha, \beta). 
  \end{split}
  \end{equation}
  Then, for the $\gamma$-field $\gamma^{(a)}$ and the Weyl function $M^{(a)}$ associated with $\{ \mathbb{C}^2, \Upsilon_0^{(a)}, \Upsilon_1^{(a)} \}$ one has 
  \begin{equation*}
    \gamma^{(a)}(0)\begin{pmatrix} c_1 \\ c_2 \end{pmatrix} = \left( \frac{\beta - \alpha}{4} \right)^{a-1} \frac{1}{\Gamma(a)} \left( \frac{c_1 + c_2}{2} v_1 + \frac{c_2 - c_1}{2} v_2 \right)
  \end{equation*}
  and
  \begin{equation} \label{equation_M_0}
    M^{(a)}(0) = \frac{a}{\beta - \alpha} \begin{pmatrix} -a & 1 \\ 1 & -a \end{pmatrix}.
  \end{equation}
\end{proposition}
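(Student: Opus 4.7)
The plan is to exhibit an explicit basis $\{v_1,v_2\}$ of $\ker (-\Delta)_\max^a$, compute the boundary mappings on this basis, and then read off $\gamma^{(a)}(0)$ by inversion and $M^{(a)}(0)=\Upsilon_1^{(a)}\gamma^{(a)}(0)$ by linear algebra. Because $(-\Delta)^a_{\textup{D}}$ is positive by Lemma~\ref{lemma_Dirichlet_basic}, one has $0\in\rho((-\Delta)^a_{\textup{D}})$, so the direct sum decomposition $\dom (-\Delta)_\max^a=\dom (-\Delta)^a_{\textup{D}}\dotplus\ker (-\Delta)_\max^a$ (cf.\ Section~\ref{section_Weyl_function_abstract}) together with $n_\pm((-\Delta)_\min^a)=2$ forces $\dim \ker (-\Delta)_\max^a=2$. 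It therefore suffices to produce two linearly independent elements of this kernel.

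First I would verify that $v_1,v_2$ belong to $\dom (-\Delta)_\max^a$. Using the factorization
\begin{equation*}
v_1(x)=\tfrac{4^{a-1}}{(\beta-\alpha)^{2(a-1)}}(x-\alpha)^{a-1}(\beta-x)^{a-1},\qquad v_2(x)=\tfrac{2}{\beta-\alpha}\bigl(x-\tfrac{\alpha+\beta}{2}\bigr)v_1(x),
\end{equation*}
both functions are seen to lie in $t^{a-1}\overline{C}^\infty([\alpha,\beta])\subset\mathfrak{r}H^{(a-1)(2a)}([\alpha,\beta])$ by Lemma~\ref{lemma_H_mu_s}(v). To check $(-\Delta)^a v_i=0$ in $\mathcal{D}'(\alpha,\beta)$, I would translate and rescale to the reference interval via the affine substitution $y=\tfrac{2}{\beta-\alpha}\bigl(x-\tfrac{\alpha+\beta}{2}\bigr)$, reducing the question to the vanishing of $(-\Delta)^a(1-y^2)^{a-1}_+$ and $(-\Delta)^a\bigl[y(1-y^2)^{a-1}_+\bigr]$ on $(-1,1)$; this is precisely one of the explicit power-function computations furnished by Dyda in \cite{D12}, which is the source the authors refer to above the proposition. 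Since $v_1$ is even and $v_2$ is odd with respect to $\tfrac{\alpha+\beta}{2}$, they are linearly independent and therefore span $\ker(-\Delta)_\max^a$.

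The rest is direct calculation with the traces. Near $x=\alpha$ one has $t(x)=x-\alpha$, so $(t^{1-a}v_1)(x)=\tfrac{4^{a-1}}{(\beta-\alpha)^{2(a-1)}}(\beta-x)^{a-1}$, which evaluates to $(4/(\beta-\alpha))^{a-1}$ at $x=\alpha$ and has derivative $-(a-1)\cdot 4^{a-1}/(\beta-\alpha)^a$; symmetric calculations at $\beta$ together with the product-rule computations for $v_2$ yield
\begin{equation*}
\Upsilon_0^{(a)}v_1=\Gamma(a)\bigl(\tfrac{4}{\beta-\alpha}\bigr)^{a-1}\begin{pmatrix}1\\1\end{pmatrix},\qquad \Upsilon_0^{(a)}v_2=\Gamma(a)\bigl(\tfrac{4}{\beta-\alpha}\bigr)^{a-1}\begin{pmatrix}-1\\1\end{pmatrix},
\end{equation*}
\begin{equation*}
\Upsilon_1^{(a)}v_1=\Gamma(a+1)(1-a)\tfrac{4^{a-1}}{(\beta-\alpha)^a}\begin{pmatrix}1\\1\end{pmatrix},\qquad \Upsilon_1^{(a)}v_2=\Gamma(a+1)(a+1)\tfrac{4^{a-1}}{(\beta-\alpha)^a}\begin{pmatrix}1\\-1\end{pmatrix}.
\end{equation*}
Inverting the first pair expresses the coefficients of any $c_1v_1+c_2v_2\in\ker(-\Delta)_\max^a$ in terms of its $\Upsilon_0^{(a)}$-image, producing the stated formula for $\gamma^{(a)}(0)$; applying $\Upsilon_1^{(a)}$ to the result and simplifying with $\Gamma(a+1)=a\Gamma(a)$ gives \eqref{equation_M_0}.

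The main obstacle is the first step: producing the correct kernel profile and justifying $(-\Delta)^a v_i=0$ on $(\alpha,\beta)$ in the distributional sense. The exponent $a-1$ in $v_1$ (rather than the more standard $a$, for which $(-\Delta)^a(1-y^2)^a_+$ is a nonzero constant on $(-1,1)$) is forced by the requirement that $v_i$ belong to the maximal domain with nontrivial weighted Dirichlet trace, and Dyda's explicit power-function computations are what make this exponent work. All the subsequent boundary evaluations and the final $2\times 2$ inversion are elementary once the kernel basis is secured.
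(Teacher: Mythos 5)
Your proposal is correct and follows essentially the same route as the paper's proof: both identify $v_1,v_2$ as a basis of $\ker(-\Delta)^a_\max$ via Dyda's power-function computations on $(-1,1)$ and an affine rescaling, then compute $\Upsilon_0^{(a)}v_j$ and $\Upsilon_1^{(a)}v_j$ explicitly and obtain $\gamma^{(a)}(0)$ by inverting the resulting $2\times 2$ system and $M^{(a)}(0)=\Upsilon_1^{(a)}\gamma^{(a)}(0)$. Your trace values and the final matrix agree with the paper's; the only (harmless) additions are the explicit membership check $v_j\in t^{a-1}\overline{C}^\infty([\alpha,\beta])$ and the parity argument for linear independence.
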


Observe that formally for $a=1$ in~\eqref{equation_M_0} one recovers exactly the formulas for $\gamma(0)$ and $M(0)$ for the case of the classical Laplacian, 
see~\eqref{g0}-\eqref{m0}. Moreover, as we will see in the proof, the functions $v_1, v_2$ in~\eqref{def_v_1_v_2} are a basis of $\ker (-\Delta)^a_\max$. In particular, constant and linear functions do not belong to $\ker (-\Delta)^a_\max$ as soon as $a\ne 1$, as they can not be represented as linear combinations of $v_1$ and $v_2$.

\begin{proof}[Proof of Proposition~\ref{proposition_M_0}]
   First, it is shown in \cite{D12} that the functions
   \begin{equation*}
     \widetilde{v}_1(y) := (1-y^2)^{a-1}_+, \quad \widetilde{v}_2(y) := 
y  \widetilde{v}_1(y), \quad y \in \mathbb{R},
   \end{equation*}
   satisfy the relation
   \begin{equation*}
     (-\Delta)^a \widetilde{v}_j(y) = 0, \quad  y \in (-1, 1), \, 
j \in \{ 1, 2\};
   \end{equation*}
   cf. \cite[Tables~1 and~2]{D12}. In other words, $\widetilde{v}_j(y)$ 
belongs to the kernel of the restricted fractional Laplacian on 
$(-1,1)$. Hence, we find that the functions $v_1, v_2$ 
in~\eqref{def_v_1_v_2}, which satisfy the relation
   \begin{equation*}
     v_j(x) = \widetilde{v}_j\left( \frac{2}{\beta - \alpha} \left( x - 
\frac{\alpha + \beta}{2} \right) \right), \quad x \in (\alpha, \beta), 
\, j \in \{ 1, 2\},
   \end{equation*}
   belong to $\ker (-\Delta)_\max^{a}$. Since $\{ \mathbb{C}^2, 
\Upsilon_0^{(a)}, \Upsilon_1^{(a)} \}$ is a boundary triplet for 
$(-\Delta)_\max^{a}$, it follows from \cite[Theorems~2.4.1 
and~2.5.1]{BHS20} that $\dim \ker (-\Delta)_\max^{a} = \dim \mathbb{C}^2 
= 2$ and therefore,
   \begin{equation*}
     \ker (-\Delta)_\max^{a} = \textup{span}\, \{ v_1, v_2 \}.
   \end{equation*}
   In particular, this implies that for all $(c_1, c_2) \in 
\mathbb{C}^2$ there exist constants $a_1 = a_1(c_1, c_2), a_2 = 
a_2(c_1,c_2) \in \mathbb{C}$ such that
   \begin{equation} \label{equation_gamma}
     \gamma^{(a)}(0) \begin{pmatrix} c_1 \\ c_2 \end{pmatrix} = a_1 v_1 
+ a_2 v_2.
   \end{equation}
   To proceed, we note that
   \begin{equation} \label{v_1_t}
     t(x)^{1-a} v_1(x) = \frac{4^{a-1}}{(\beta - \alpha)^{2a-2}} 
\begin{cases} (\beta - x)^{a-1}, &x \in (\alpha, \frac{\alpha + 
\beta}{2}), \\ (x - \alpha)^{a-1}, &x \in (\frac{\alpha + \beta}{2}, 
\beta),  \end{cases}
   \end{equation}
   and
   \begin{equation} \label{v_2_t}
     t(x)^{1-a} v_2(x) = \frac{4^{a-1}}{(\beta - \alpha)^{2a-2}} 
\frac{2}{\beta - \alpha} \left( x - \frac{\alpha + \beta}{2} \right) 
\begin{cases} (\beta - x)^{a-1}, &x \in (\alpha, \frac{\alpha + 
\beta}{2}), \\ (x - \alpha)^{a-1}, &x \in (\frac{\alpha + \beta}{2}, 
\beta).  \end{cases}
   \end{equation}
   Therefore, a direct calculation shows that
   \begin{equation}\label{upsi000}
     \Upsilon_0^{(a)} v_1 = \Gamma(a) \begin{pmatrix} (t^{1-a} 
v_1)(\alpha) \\ (t^{1-a} v_1)(\beta) \end{pmatrix} = \Gamma(a) 
\left(\frac{4}{\beta - \alpha}\right)^{a-1} \begin{pmatrix} 1 \\ 1 
\end{pmatrix}
   \end{equation}
   and
   \begin{equation*}
     \Upsilon_0^{(a)} v_2 = \Gamma(a) \begin{pmatrix} (t^{1-a} 
v_2)(\alpha) \\ (t^{1-a} v_2)(\beta) \end{pmatrix} = \Gamma(a) 
\left(\frac{4}{\beta - \alpha}\right)^{a-1} \begin{pmatrix} -1 \\ 1 
\end{pmatrix}.
   \end{equation*}
   Combining this with~\eqref{equation_gamma} and the definition of 
$\gamma^{(a)}(0)$ (see Definition~\ref{def_gamma_Weyl_abstract}), we get 
that
   \begin{equation*}
     \begin{pmatrix} c_1 \\ c_2 \end{pmatrix} = \Upsilon_0^{(a)} 
\gamma^{(a)}(0) \begin{pmatrix} c_1 \\ c_2 \end{pmatrix} = \Gamma(a) 
\left(\frac{4}{\beta - \alpha}\right)^{a-1} \left( a_1 \begin{pmatrix} 1 
\\ 1 \end{pmatrix} + a_2 \begin{pmatrix} -1 \\ 1 \end{pmatrix} \right).
   \end{equation*}
   Solving this linear system of equations yields
   \begin{equation*}
     a_1 = \left( \frac{\beta - \alpha}{4} \right)^{a-1} 
\frac{1}{\Gamma(a)} \frac{c_1 + c_2}{2} , \qquad a_2 = \left( 
\frac{\beta - \alpha}{4} \right)^{a-1} \frac{1}{\Gamma(a)} \frac{c_2 - 
c_1}{2},
   \end{equation*}
   which gives in~\eqref{equation_gamma} exactly the claimed formula for 
$\gamma^{(a)}(0)$.

   To get the claimed formula for $M^{(a)}(0)$, we note that a direct 
calculation using~\eqref{v_1_t} and~\eqref{v_2_t} yields
   \begin{equation*}
     \Upsilon_1^{(a)} v_1 = \Gamma(a+1) \begin{pmatrix} (t^{1-a} 
v_1)'(\alpha) \\ -(t^{1-a} v_1)'(\beta) \end{pmatrix} = \Gamma(a+1) 
\frac{4^{a-1} (1-a)}{(\beta - \alpha)^a} \begin{pmatrix} 1 \\ 1 
\end{pmatrix}
   \end{equation*}
   and
   \begin{equation*}
     \Upsilon_1^{(a)} v_2 = \Gamma(a+1) \begin{pmatrix} (t^{1-a} 
v_2)'(\alpha) \\ -(t^{1-a} v_2)'(\beta) \end{pmatrix} = \Gamma(a+1) 
\frac{4^{a-1} (1+a)}{(\beta - \alpha)^a} \begin{pmatrix} 1 \\ -1 
\end{pmatrix}.
   \end{equation*}
   Therefore, one gets for any $(c_1,c_2) \in \mathbb{C}^2$
   \begin{equation*}
     \begin{split}
       M^{(a)}(0) \begin{pmatrix} c_1 \\ c_2 \end{pmatrix} &= 
\Upsilon_1^{(a)} \gamma^{(a)} (0) \begin{pmatrix} c_1 \\ c_2 
\end{pmatrix}  \\
       &=  \left( \frac{\beta - \alpha}{4} \right)^{a-1} 
\frac{1}{\Gamma(a)} \left( \frac{c_1 + c_2}{2} \Upsilon_1^{(a)} v_1 + 
\frac{c_2 - c_1}{2} \Upsilon_1^{(a)} v_2 \right) \\
       &= \left( \frac{\beta - \alpha}{4} \right)^{a-1} 
\frac{4^{a-1}}{(\beta - \alpha)^a} 
\frac{\Gamma(a+1)}{\Gamma(a)}\begin{pmatrix} -a & 1 \\ 1 & -a 
\end{pmatrix} \begin{pmatrix} c_1 \\ c_2 \end{pmatrix},
     \end{split}
   \end{equation*}
   which shows the claimed formula for $M^{(a)}(0)$.
\end{proof}

In the following corollary we express the $\gamma$-field and Weyl function for the boundary triplet $\{ \mathbb{C}^2, \Upsilon_0^{(a)}, \Upsilon_1^{(a)} \}$ for general $\lambda \in \rho((-\Delta)^a_\textup{D})$ in terms of $\gamma^{(a)}(0)$, $M^{(a)}(0)$, and $((-\Delta)^a_\textup{D} - \lambda)^{-1}$. This result is an immediate consequence of Proposition~\ref{proposition_M_0},~\eqref{gamm2}, and~\eqref{Weyl2}.

\begin{corollary} \label{corollary_gamma_Weyl}
  Let $\{ \mathbb{C}^2, \Upsilon_0^{(a)}, \Upsilon_1^{(a)} \}$ be the boundary triplet for $(-\Delta)_\max^a$ defined in~\eqref{def_Ups0}-\eqref{def_Ups1}. Then, for all $\lambda \in \rho((-\Delta)^a_\textup{D})$ one has 
  \begin{equation*}
    \gamma^{(a)}(\lambda)=\bigl(1+\lambda \big((-\Delta)^a_\textup{D} - \lambda\big)^{-1}\bigr) \gamma^{(a)}(0)
  \end{equation*}
  and
  \begin{equation*}
    M^{(a)}(\lambda) = \frac{a}{\beta - \alpha} \begin{pmatrix} -a & 1 \\ 1 & -a \end{pmatrix}+\gamma^{(a)}(0)^*\bigl[\lambda+\lambda^2 \big((-\Delta)^a_\textup{D}-\lambda\big)^{-1}\bigr]\gamma^{(a)}(0).
  \end{equation*}
\end{corollary}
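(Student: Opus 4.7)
The plan is to apply the general identities~\eqref{gamm2} and~\eqref{Weyl2} from the abstract theory with the specific choice $\mu=0$, invoking Proposition~\ref{proposition_M_0} to identify the resulting constant term. The key observation is that, by Theorem~\ref{theorem_bt} (specifically~\eqref{equation_A_0}), the self-adjoint extension $A_0$ associated with the boundary triplet $\{\mathbb{C}^2,\Upsilon_0^{(a)},\Upsilon_1^{(a)}\}$ is exactly the Dirichlet realization $(-\Delta)_\textup{D}^{a}$, and by Lemma~\ref{lemma_Dirichlet_basic} this operator is positive, so $0\in\rho((-\Delta)_\textup{D}^{a})$. In particular, $\gamma^{(a)}(0)$ and $M^{(a)}(0)$ are well defined, as already used in Proposition~\ref{proposition_M_0}.

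For the first identity, I would simply set $\mu = 0$ in the first equation of~\eqref{gamm2}, which after substituting $A_0 = (-\Delta)_\textup{D}^{a}$ gives
\begin{equation*}
  \gamma^{(a)}(\lambda) = \bigl(1 + \lambda \bigl((-\Delta)_\textup{D}^{a} - \lambda\bigr)^{-1}\bigr) \gamma^{(a)}(0),\qquad \lambda\in\rho((-\Delta)_\textup{D}^{a}),
\end{equation*}
which is the claim.

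For the second identity, I would set $\mu = 0$ in~\eqref{Weyl2}. Since $0\in\mathbb{R}$, we have $\Real\mu = 0$ and $\overline{\mu}=0$, and the formula reduces to
\begin{equation*}
  M^{(a)}(\lambda) = \Real M^{(a)}(0) + \gamma^{(a)}(0)^{*}\bigl[\lambda + \lambda^{2}\bigl((-\Delta)_\textup{D}^{a}-\lambda\bigr)^{-1}\bigr]\gamma^{(a)}(0).
\end{equation*}
The matrix $M^{(a)}(0)$ computed in Proposition~\ref{proposition_M_0} is real-symmetric, so $\Real M^{(a)}(0) = M^{(a)}(0) = \tfrac{a}{\beta-\alpha}\bigl(\begin{smallmatrix}-a & 1 \\ 1 & -a\end{smallmatrix}\bigr)$, and the claimed formula follows.

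There is essentially no obstacle here: the entire content of the corollary is the specialization of the abstract identities~\eqref{gamm2} and~\eqref{Weyl2} to the reference point $\mu = 0$, combined with the explicit value of $M^{(a)}(0)$ from Proposition~\ref{proposition_M_0} and the identification $A_0=(-\Delta)_\textup{D}^{a}$ from Theorem~\ref{theorem_bt}. The only minor point worth mentioning is to justify why $M^{(a)}(0)$ is self-adjoint, which follows either from the fact that it is a real symmetric $2\times 2$ matrix or, abstractly, from the Nevanlinna property $M(\eta)=M(\eta)^{*}$ on $\rho(A_0)\cap\mathbb{R}$ recalled after Definition~\ref{def_gamma_Weyl_abstract}.
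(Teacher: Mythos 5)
Your proof is correct and is exactly the paper's argument: the paper states the corollary as an immediate consequence of Proposition~\ref{proposition_M_0} together with the abstract identities~\eqref{gamm2} and~\eqref{Weyl2} specialized to $\mu=0$, which is precisely what you do. Your added remarks that $0\in\rho((-\Delta)^a_\textup{D})$ by positivity and that $\Real M^{(a)}(0)=M^{(a)}(0)$ since the matrix is real symmetric are the right (and only) details to check.
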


One can apply now directly Theorem~\ref{Kreinformula} and the considerations below Theorem~\ref{Kreinformula} (see \eqref{jaab} and \eqref{kreini2}) to obtain a Birman-Schwinger principle and a Krein type resolvent formula for all self-adjoint extensions of $(-\Delta)_\min^{a}$ from Proposition~\ref{proposition_self_adjoint}.

\begin{proposition}\label{proposition_Kreinformula_0}
Let $\{\mathbb{C}^2,\Upsilon_0^{(a)},\Upsilon_1^{(a)}\}$ be the boundary triplet for $(-\Delta)_\max^{a}$ defined in~\eqref{def_Ups0}-\eqref{def_Ups1}, suppose that $\mathcal A,\mathcal B\in \mathbb{C}^{2 \times 2}$ satisfy Assumption~\ref{assumption_parameter}, and let $(-\Delta)^a_{\cA, \cB}$ be given by~\eqref{def_extension_fractional_Laplace}.
Then the following assertions hold for all $\lambda\in\rho((-\Delta)^a_\textup{D})$:
\begin{itemize}
\item [{\rm (i)}] $\lambda\in\sigma_{\rm p}((-\Delta)^a_{\cA, \cB})$ if and only if
$0\in\sigma_{\rm p}(\cB- \cA M^{(a)}(\lambda) )$, and in this case
\begin{equation*}
\ker((-\Delta)^a_{\cA, \cB}-\lambda)=\gamma^{(a)}(\lambda)\ker(\cB- \cA M^{(a)}(\lambda)).
\end{equation*}
In particular, $0 \in\sigma_{\rm p}((-\Delta)^a_{\cA, \cB})$ if and only if
\begin{equation*}
\det \left(\cB-\frac{a}{\beta - \alpha} \cA \begin{pmatrix} -a & 1 \\ 1 & -a \end{pmatrix}  \right) = 0;
\end{equation*}

\item [{\rm (ii)}] $\lambda \in\rho((-\Delta)^a_{\cA, \cB})$ if and only if
$0\notin\sigma_\textup{p}(\mathcal B-\mathcal A M^{(a)}(\lambda))$ and in this case
\begin{equation*}
\big((-\Delta)^a_{\cA, \cB} - \lambda\big)^{-1}=\big((-\Delta)^a_{\textup{D}} - \lambda\big)^{-1}
+\gamma^{(a)}(\lambda)\bigl(\mathcal B-\mathcal A M^{(a)}(\lambda)\bigr)^{-1}\mathcal A \gamma^{(a)}(\overline{\lambda})^*.
\end{equation*}
\end{itemize}
\end{proposition}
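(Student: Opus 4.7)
The plan is simply to invoke the abstract Theorem~\ref{Kreinformula} together with its matrix-parameter refinements \eqref{jaab} and \eqref{kreini2}, since all the analytic work has already been done. By Theorem~\ref{theorem_bt}, $\{\mathbb{C}^2,\Upsilon_0^{(a)},\Upsilon_1^{(a)}\}$ is a boundary triplet for $(-\Delta)^a_\max$ with distinguished self-adjoint extension $(-\Delta)^a_\textup{D}=(-\Delta)^a_\max\upharpoonright\ker\Upsilon_0^{(a)}$, and its associated $\gamma$-field and Weyl function are exactly $\gamma^{(a)}$ and $M^{(a)}$. By Proposition~\ref{proposition_self_adjoint}, the matrices $\cA,\cB$ satisfying Assumption~\ref{assumption_parameter} define the self-adjoint extension $(-\Delta)^a_{\cA,\cB}=A_\Theta$, where $\Theta$ is the self-adjoint relation represented via $(\cA,\cB)$ as in~\eqref{nanu33}. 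Thus the abstract assertions can be read off directly with $A_0=(-\Delta)^a_\textup{D}$, $A_\Theta=(-\Delta)^a_{\cA,\cB}$, $\gamma=\gamma^{(a)}$, and $M=M^{(a)}$.

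For part (i), the characterization of $\sigma_\textup{p}((-\Delta)^a_{\cA,\cB})$ and the eigenspace formula are exactly the content of \eqref{jaab}. For the specialization to $\lambda=0$ I would use that $\cB-\cA M^{(a)}(0)$ is a $2\times 2$ matrix, so $0\in\sigma_\textup{p}(\cB-\cA M^{(a)}(0))$ is equivalent to $\det(\cB-\cA M^{(a)}(0))=0$, and then insert the explicit expression for $M^{(a)}(0)$ from Proposition~\ref{proposition_M_0}.

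For part (ii), the abstract result quoted after Theorem~\ref{Kreinformula} (from \cite[Theorem~A.3]{HRT25}) states that $\lambda\in\rho(A_\Theta)$ iff $0\notin\sigma_\textup{p}(\cB-\cA M(\lambda))$ \emph{and} $\ran\cA\subset\ran(\cB-\cA M(\lambda))$. Because $\cG=\mathbb{C}^2$ is finite-dimensional, injectivity of the $2\times 2$ matrix $\cB-\cA M^{(a)}(\lambda)$ already forces bijectivity, so the range inclusion is automatic and drops out. The Krein-type resolvent formula is then exactly \eqref{kreini2} specialized to the present situation.

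No serious obstacle is expected: the nontrivial content — establishing the boundary triplet in Theorem~\ref{theorem_bt}, identifying $(-\Delta)^a_\textup{D}$ as $A_0$, computing $M^{(a)}(0)$ in Proposition~\ref{proposition_M_0}, and parametrizing self-adjoint extensions in Proposition~\ref{proposition_self_adjoint} — has already been carried out. The only point deserving an explicit remark is the collapse of the range condition in (ii) to the injectivity condition, which is a direct consequence of $\dim\cG<\infty$.
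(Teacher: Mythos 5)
Your proposal is correct and follows essentially the same route as the paper, which likewise obtains the result as a direct application of Theorem~\ref{Kreinformula} together with \eqref{jaab} and \eqref{kreini2}, using that $\gamma^{(a)}$ and $M^{(a)}$ are the $\gamma$-field and Weyl function for the boundary triplet with $A_0=(-\Delta)^a_\textup{D}$. Your explicit remark that the range condition $\ran\cA\subset\ran(\cB-\cA M^{(a)}(\lambda))$ from \cite[Theorem~A.3]{HRT25} is automatic in the finite-dimensional setting (injectivity of a $2\times 2$ matrix implies bijectivity) is a correct and worthwhile clarification of a point the paper leaves implicit.
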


An immediate consequence of the previous considerations is the fact that all self-adjoint extensions of $(-\Delta)_\min^{a}$ are semibounded from below.

\begin{proposition} \label{prop_semibounded}
  Let $\{\mathbb{C}^2,\Upsilon_0^{(a)},\Upsilon_1^{(a)}\}$ be the boundary triplet for $(-\Delta)_\max^{a}$ defined in~\eqref{def_Ups0}-\eqref{def_Ups1}, suppose that $\mathcal A,\mathcal B\in \mathbb{C}^{2 \times 2}$ satisfy Assumption~\ref{assumption_parameter}, and let $(-\Delta)^a_{\cA, \cB}$ be given by~\eqref{def_extension_fractional_Laplace}. 
  Then $\sigma((-\Delta)^a_{\cA, \cB})\cap 
  (-\infty,0]$ consists of at most two eigenvalues (taking multiplicities into account), and the positive spectrum (discrete eigenvalues) of $(-\Delta)^a_{\cA, \cB}$
  accumulates to $+\infty$. In particular, $(-\Delta)^a_{\cA, \cB}$ is semibounded from below.
\end{proposition}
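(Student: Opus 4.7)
The strategy is to exhibit $(-\Delta)^a_{\cA,\cB}$ as a resolvent perturbation of rank at most two of the positive Dirichlet realization $(-\Delta)^a_\textup{D}$ via the Krein formula of Proposition~\ref{proposition_Kreinformula_0}~(ii), and then to use a Weyl-type interlacing on compact self-adjoint operators to bound the nonpositive spectrum.

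First I would recall from Lemma~\ref{lemma_Dirichlet_basic} that $(-\Delta)^a_\textup{D}$ is strictly positive with purely discrete spectrum $0<\lambda_1\leq\lambda_2\leq\dots$, so that $(-\infty,0)\subset\rho((-\Delta)^a_\textup{D})$. Since the spectrum of $(-\Delta)^a_{\cA,\cB}$ is also purely discrete by Proposition~\ref{proposition_self_adjoint}, one can choose some $\lambda_0\in(-\infty,0)$ lying in the joint resolvent set. Writing $R_A:=((-\Delta)^a_\textup{D}-\lambda_0)^{-1}$ and $R_B:=((-\Delta)^a_{\cA,\cB}-\lambda_0)^{-1}$, Proposition~\ref{proposition_Kreinformula_0}~(ii) yields
\begin{equation*}
R_B-R_A=\gamma^{(a)}(\lambda_0)\bigl(\cB-\cA M^{(a)}(\lambda_0)\bigr)^{-1}\cA\,\gamma^{(a)}(\overline{\lambda_0})^*,
\end{equation*}
which has rank at most $2$ since $\gamma^{(a)}(\lambda_0)$ maps $\mathbb{C}^2$ into $L^2(\alpha,\beta)$.

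The operator $R_A$ is positive, self-adjoint, and compact with $\sigma(R_A)\subset(0,\|R_A\|]$, where $\|R_A\|=(\lambda_1-\lambda_0)^{-1}<|\lambda_0|^{-1}$, while $R_B$ is its rank-$\leq 2$ self-adjoint perturbation. A min-max argument applied on the subspace $\ker(R_B-R_A)$, which has codimension at most two and on which $R_A$ and $R_B$ agree as quadratic forms, shows that $R_B$ admits at most two eigenvalues in $\mathbb{R}\setminus(0,\|R_A\|]$, counted with multiplicity. Under the strictly decreasing bijection $\mu\mapsto(\mu-\lambda_0)^{-1}$ from $(\lambda_0,\infty)$ onto $(0,\infty)$, any eigenvalue $\mu\leq 0$ of $(-\Delta)^a_{\cA,\cB}$ corresponds to an eigenvalue of $R_B$ lying either in $(-\infty,0)$ (if $\mu<\lambda_0$) or in $[|\lambda_0|^{-1},\infty)\subset(\|R_A\|,\infty)$ (if $\mu\in(\lambda_0,0]$), and in either case outside $(0,\|R_A\|]$. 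Hence $(-\Delta)^a_{\cA,\cB}$ has at most two eigenvalues in $(-\infty,0]$. Being an unbounded self-adjoint extension of the unbounded symmetric operator $(-\Delta)^a_\min$ with purely discrete spectrum whose nonpositive part is finite, $(-\Delta)^a_{\cA,\cB}$ must therefore be semibounded from below, and its positive eigenvalues must accumulate at $+\infty$.

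The main obstacle is the sharper form of the min-max bound: separately estimating the number of eigenvalues of $R_B$ below $0$ and above $\|R_A\|$ by the rank of $R_B-R_A$ only yields a total of four, whereas the sharp bound of two requires the additional observation that a hypothetical vector in $\ker(R_B-R_A)$ written as a sum of eigenvectors corresponding to eigenvalues on the two sides would yield contradictory signs of the quadratic form $((R_B-R_A)\,\cdot\,,\,\cdot\,)$ on those two components.
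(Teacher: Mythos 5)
Your overall architecture coincides with the paper's: both proofs use the Krein resolvent formula of Proposition~\ref{proposition_Kreinformula_0} to exhibit the resolvent of $(-\Delta)^a_{\cA,\cB}$ as a rank-at-most-two perturbation of the resolvent of the positive operator $(-\Delta)^a_\textup{D}$, and then count eigenvalues in the gap below the Dirichlet spectrum. The paper simply invokes \cite[\S~9.3, Theorem~3]{BS87} for the counting step (applied to the resolvent difference at $\lambda=i$ and to the gap $(-\infty,\kappa_0)$ of $(-\Delta)^a_\textup{D}$), whereas you work at a real point $\lambda_0<0$ and attempt to prove the counting lemma yourself. The lemma is true, but your justification of the sharp bound fails in exactly the case you flag as the main obstacle.

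Concretely, set $F:=R_B-R_A$ and suppose $R_B$ had three eigenvalues outside $[0,\|R_A\|]$, with eigenvector span $W=W_-\oplus W_+$ ($W_-$ for eigenvalues $<0$, $W_+$ for eigenvalues $>\|R_A\|$). Your plan is to pick $0\neq v\in W\cap\ker F$ and derive a sign contradiction from $v=v_-+v_+$. This works when $v\in W_-$ (then $(R_Bv,v)<0<(R_Av,v)$) or $v\in W_+$ (then $(R_Bv,v)>\|R_A\|\,\|v\|^2\geq(R_Av,v)$), but in the mixed case the inequalities $(R_Bv_-,v_-)<0$ and $(R_Bv_+,v_+)>\|R_A\|\,\|v_+\|^2$ pull in opposite directions, and their sum can perfectly well land in the admissible window $(0,\|R_A\|\,\|v\|^2]$; moreover $v_-$ and $v_+$ are spectral components of $R_B$, not of $F$ or $R_A$, so the cross terms in $(R_Av,v)$ do not vanish and the ``contradictory signs on the two components'' do not assemble into a contradiction for $v$. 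As written, your argument only yields at most two eigenvalues on each side, i.e.\ at most four in total --- enough for semiboundedness and the accumulation at $+\infty$, but not for the stated bound of two. The sharp count requires splitting the rank according to the inertia of $F$: if $F$ has $p$ positive and $q$ negative eigenvalues with $p+q=\rank F\leq 2$, intersect $W_+$ with the orthogonal complement of the positive spectral subspace of $F$, where $(Fv,v)\leq 0$ and hence $(R_Bv,v)\leq(R_Av,v)\leq\|R_A\|\,\|v\|^2$, to bound the number of eigenvalues above $\|R_A\|$ by $p$, and argue symmetrically below $0$ to get the bound $q$ there. Alternatively, simply cite \cite[\S~9.3, Theorem~3]{BS87}, as the paper does.
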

\begin{proof}
  Recall that $(-\Delta)^a_\textup{D}$ is a self-adjoint positive operator by Lemma~\ref{lemma_Dirichlet_basic}. Denote the lower bound of $(-\Delta)^a_\textup{D}$ by $\kappa_0>0$. It follows from Proposition~\ref{proposition_Kreinformula_0} that the resolvent difference 
  \begin{equation*}
\big((-\Delta)^a_{\cA, \cB} - i \big)^{-1}-\big((-\Delta)^a_{\textup{D}} - i \big)^{-1} 
= \gamma^{(a)}(i)\bigl(\mathcal B-\mathcal A M^{(a)}(i)\bigr)^{-1}\mathcal A \gamma^{(a)}(-i)^*
\end{equation*}
has at most rank two. Therefore, by \cite[\S~9.3, Theorem~3]{BS87} the set $\sigma((-\Delta)^a_{\cA, \cB}) \cap (-\infty, \kappa_0)$ consists of at most two eigenvalues (taking multiplicities into account) and hence $(-\Delta)^a_{\cA, \cB}$ is semibounded from below.
As $(-\Delta)^a_{\cA, \cB}$ is an unbounded self-adjoint operator the spectrum must accumulate to $+\infty$. According to Proposition~\ref{proposition_self_adjoint} the spectrum is purely discrete.
\end{proof}

\subsection{Further facts about $(-\Delta)^a_{\rm D}$}

Using the boundary triplet $\{\mathbb{C}^2,\Upsilon_0^{(a)},\Upsilon_1^{(a)}\}$  from~\eqref{def_Ups0}-\eqref{def_Ups1} we can prove some further results about the restricted fractional Laplacian with Dirichlet boundary conditions. Our starting point is the 
observation that $(-\Delta)^a_\textup{D}$ is the Friedrichs extension of $(-\Delta)_\min^{a}$; see also  \cite{G14, G15}.
The following result is a direct application of Proposition~\ref{sf}.

\begin{proposition}\label{proposition_Dirichlet_Friedrichs}
  The Friedrichs extension of $(-\Delta)_\min^{a}$ is given by $(-\Delta)^a_\textup{D}$.
\end{proposition}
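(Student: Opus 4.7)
The proof is a direct application of Proposition~\ref{sf} to the boundary triplet $\{\mathbb{C}^2,\Upsilon_0^{(a)},\Upsilon_1^{(a)}\}$ provided by Theorem~\ref{theorem_bt}. Its standing hypotheses are satisfied: since $(-\Delta)^a_\min\subset(-\Delta)^a_\textup{D}$ and $(-\Delta)^a_\textup{D}$ is positive with lower bound $\kappa_0>0$ (Lemma~\ref{lemma_Dirichlet_basic}), the minimal operator $(-\Delta)^a_\min$ is also positive, and $A_0=(-\Delta)^a_\max\upharpoonright\ker\Upsilon_0^{(a)}=(-\Delta)^a_\textup{D}$ is semibounded. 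Thus it suffices to verify
\begin{equation*}
M^{(a)}(-\infty)=\left\{\begin{pmatrix} 0 \\ g' \end{pmatrix}: g' \in \mathbb{C}^2 \right\}
\end{equation*}
in the strong resolvent sense, after which the concluding part of Proposition~\ref{sf} identifies the Friedrichs extension with $A_0$.

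Setting $T:=(-\Delta)^a_\textup{D}$ and exploiting the elementary identity $\lambda+\lambda^2(T-\lambda)^{-1}=\lambda T(T-\lambda)^{-1}$, Corollary~\ref{corollary_gamma_Weyl} together with the spectral theorem for the positive operator $T$ gives, for every $c\in\mathbb{C}^2$ and $\lambda<0$,
\begin{equation*}
\langle M^{(a)}(\lambda) c, c\rangle = \langle M^{(a)}(0) c, c\rangle - \int\frac{|\lambda|\, t}{t+|\lambda|}\, d\mu_c(t),
\end{equation*}
where $\mu_c$ denotes the scalar spectral measure of $T$ applied to $\gamma^{(a)}(0) c\in L^2(\alpha,\beta)$. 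The integrand is monotonically increasing in $|\lambda|$ and converges pointwise to $t$ on the spectrum of $T$, so by monotone convergence the limit as $\lambda\to -\infty$ equals $\langle M^{(a)}(0)c,c\rangle-\|T^{1/2}\gamma^{(a)}(0)c\|^2_{L^2(\alpha,\beta)}$, interpreted as $-\infty$ whenever $\gamma^{(a)}(0) c\notin\dom T^{1/2}$.

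The crucial step is to prove $\gamma^{(a)}(0) c\notin\dom T^{1/2}$ for every $c=(c_1,c_2)^\top\in\mathbb{C}^2\setminus\{0\}$. From $\Upsilon_0^{(a)}\gamma^{(a)}(0)c=c$ and the definition~\eqref{def_Ups0} of $\Upsilon_0^{(a)}$, the function $\phi:=\gamma^{(a)}(0) c$ satisfies $(t^{1-a}\phi)(\alpha)=c_1/\Gamma(a)$ and $(t^{1-a}\phi)(\beta)=c_2/\Gamma(a)$, so $\phi(x)\sim c_1 (x-\alpha)^{a-1}/\Gamma(a)$ near $\alpha$ and $\phi(x)\sim c_2 (\beta-x)^{a-1}/\Gamma(a)$ near $\beta$, yielding an unbounded $t^{a-1}$ singularity at any boundary point with nonzero corresponding entry of $c$. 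On the other hand, combining Lemma~\ref{lemma_H_mu_s}\,(ii) with the form-representation identity $\langle T\phi,\phi\rangle=\|(-\Delta)^{a/2}\mathfrak{e}\phi\|_{L^2(\mathbb{R})}^2$ on $\dom T$ (in the spirit of~\cite{BLLR18}) shows that $\dom T^{1/2}\subset\mathfrak{r}\dot{H}^a([\alpha,\beta])$; since $a>\frac{1}{2}$, the Sobolev embedding $H^a(\mathbb{R})\hookrightarrow C(\mathbb{R})$ precludes boundary blow-ups, and hence $\phi\notin\dom T^{1/2}$.

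Consequently $\langle M^{(a)}(\lambda) c,c\rangle\to -\infty$ as $\lambda\to -\infty$ for every $c\ne 0$; by monotonicity of $M^{(a)}$ on $(-\infty,\kappa_0)$ and compactness of the unit sphere in $\mathbb{C}^2$, both eigenvalues of the symmetric matrix $M^{(a)}(\lambda)$ tend to $-\infty$, whence $M^{(a)}(\lambda)^{-1}\to 0$ in norm. This is exactly strong resolvent (equivalently, strong graph) convergence of $M^{(a)}(\lambda)$ to the purely multi-valued relation $\{(0,g')^\top : g'\in\mathbb{C}^2\}$, and Proposition~\ref{sf} completes the proof. The principal obstacle in executing this plan is the concrete identification of $\dom T^{1/2}$ with a function space that precludes boundary blow-ups; once this is in hand the rest reduces to the spectral calculus above and elementary linear algebra in $\mathbb{C}^2$.
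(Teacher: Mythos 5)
Your proposal is essentially sound but follows a genuinely different route from the paper. The paper proves $\lim_{\lambda\to-\infty}(M^{(a)}(\lambda))^{-1}=0$ \emph{quantitatively}: it passes to the transposed boundary triplet $\{\mathbb{C}^2,\Upsilon_1^{(a)},-\Upsilon_0^{(a)}\}$, establishes the interpolation identity $[H^{(a-1)(2a)},H^{(a-1)(0)}]_\theta=H^{(a-1)(2a\theta)}$ via Lions--Magenes, deduces boundedness of $\Upsilon_0^{(a)}(\widetilde A_0-\lambda)^{-\theta}$ from the trace theorem on $H^{(a-1)(2a\theta)}$ for $\theta\in(\frac{1}{2}(1-\frac{1}{2a}),\frac12)$, and then invokes \cite[Theorem~6.1]{BLLR18} to get a decay rate $\|(M^{(a)}(\lambda))^{-1}\|\lesssim(\kappa_1-\lambda)^{2\theta-1}$. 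You instead use the classical Kre\u{\i}n--Derkach--Malamud criterion: via the representation in Corollary~\ref{corollary_gamma_Weyl} and the spectral theorem, $\langle M^{(a)}(\lambda)c,c\rangle\to-\infty$ precisely when $\gamma^{(a)}(0)c\notin\dom\,((-\Delta)^a_{\mathrm{D}})^{1/2}$, and you verify the latter for all $c\neq0$ from the explicit $t^{a-1}$ blow-up of $v_1,v_2$ combined with the embedding $H^a(\mathbb{R})\hookrightarrow C(\mathbb{R})$ for $a>\frac12$. Your spectral-calculus and compactness steps (monotone convergence, the Dini-type argument forcing both eigenvalues of $M^{(a)}(\lambda)$ to $-\infty$, hence norm convergence of the inverses to $0$ and strong graph convergence to the purely multivalued relation) are all correct, and there is no circularity in using properties of the already-defined self-adjoint operator $(-\Delta)^a_{\mathrm{D}}$ to conclude it equals $S_F$. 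What your approach buys is the avoidance of interpolation theory for H\"ormander spaces; what it loses is the explicit decay rate and, more importantly, it concentrates all the analytic difficulty into one asserted step.

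That step --- $\dom\,((-\Delta)^a_{\mathrm{D}})^{1/2}\subset\mathfrak{r}\dot H^a([\alpha,\beta])$ --- is the crux and is not actually proved in your text. To make it rigorous you need two ingredients: (a) the inclusion $\dom(-\Delta)^a_{\mathrm{D}}=\mathfrak{r}H^{a(2a)}([\alpha,\beta])\subset\mathfrak{r}\dot H^{a}([\alpha,\beta])$, which follows from Lemma~\ref{lemma_H_mu_s}~(ii) together with the fact that $\mathfrak{e}(t^a w)\in\dot H^a([\alpha,\beta])$ for $w\in\overline H^a(\alpha,\beta)$ (a Hardy-inequality/zero-extension argument, not automatic); and (b) the identity $((-\Delta)^a_{\mathrm{D}}f,f)_{L^2(\alpha,\beta)}=\|(-\Delta)^{a/2}\mathfrak{e}f\|^2_{L^2(\mathbb{R})}$ on all of $\dom(-\Delta)^a_{\mathrm{D}}$, which requires the duality-pairing argument of Remark~\ref{maxrem}~(i) (the paper's Corollary~\ref{corollary_quadratic_form} only establishes this on the smaller set $\mathfrak{r}\dot H^{2a}([\alpha,\beta])$ and then takes form closures). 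Only with both in hand does the form norm of $(-\Delta)^a_{\mathrm{D}}$ become equivalent to $\|\mathfrak{e}\cdot\|_{H^a(\mathbb{R})}$ on its operator domain, so that the form closure stays inside the closed subspace $\dot H^a([\alpha,\beta])$ and hence inside $L^\infty$. You flag this as the principal obstacle yourself; it is fillable with the tools the paper provides, but as written it is a gap rather than a proof.
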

\begin{proof}
  By Proposition~\ref{sf} the Friedrichs extension $S_F$ of $(-\Delta)_\min^{a}$ is the restriction of $(-\Delta)_\max^{a}$ to the set 
  \begin{equation*}
    \dom S_F = \left\{f\in \dom (-\Delta)_\max^{a}: \begin{pmatrix} \Upsilon_0^{(a)} f \\ \Upsilon_1^{(a)} f\end{pmatrix}\in M^{(a)}(-\infty)\right\},
  \end{equation*}
  where the limit $M^{(a)}(-\infty)$ of the Weyl function $M^{(a)}$ is understood in the strong resolvent sense or, equivalently, in the strong graph sense.
  
  Since $(-\Delta)^a_\textup{D} = (-\Delta)_\max^{a} \upharpoonright \ker \Upsilon_0^{(a)}$, it suffices to prove that
  \begin{equation} \label{equation_M_infty}
    (M^{(a)}(-\infty))^{-1} = \lim_{\lambda \rightarrow -\infty} (M^{(a)}(\lambda))^{-1} = 0 \quad \text{ in } \mathbb{C}^{2 \times 2}.
  \end{equation}
  For that purpose, we consider the triplet $\{ \mathbb{C}^2, \widetilde{\Upsilon}_0^{(a)}, \widetilde{\Upsilon}_1^{(a)} \}$, where
  \begin{equation*}
    \widetilde{\Upsilon}_0^{(a)} = \Upsilon_1^{(a)} \quad \text{and} \quad \widetilde{\Upsilon}_1^{(a)} = -\Upsilon_0^{(a)}.
  \end{equation*}
  As $\{\mathbb{C}^2,\Upsilon_0^{(a)},\Upsilon_1^{(a)}\}$ is a boundary triplet for $(-\Delta)_\max^{a}$, also $\{ \mathbb{C}^2, \widetilde{\Upsilon}_0^{(a)}, \widetilde{\Upsilon}_1^{(a)} \}$ is a boundary triplet for $(-\Delta)_\max^{a}$, for which we have 
  \begin{equation*}
    \widetilde{A}_0 = (-\Delta)_\max^{a} \upharpoonright \ker \widetilde{\Upsilon}_0^{(a)} = (-\Delta)_\max^{a} \upharpoonright \ker \Upsilon_1^{(a)}
  \end{equation*}
  and the corresponding  Weyl function $\widetilde{M}^{(a)}$ is given by
  \begin{equation*}
    \widetilde{M}^{(a)}(\lambda) = -(M^{(a)}(\lambda))^{-1}, \quad  \lambda \in \rho((-\Delta)_\textup{D}^a) \cap \rho(\widetilde{A}_0).
  \end{equation*}
  From Proposition~\ref{prop_semibounded} it is clear that $\widetilde{A}_0$ is bounded from below and hence we can choose $\kappa_1<0$ such that 
  $\widetilde{A}_0\geq \kappa_1$. As $(-\Delta)_\textup{D}^a$ is positive by Lemma~\ref{lemma_Dirichlet_basic} it is clear that 
  $(-\infty, \kappa_1)$ is contained in $\rho((-\Delta)_\textup{D}^a) \cap \rho(\widetilde{A}_0)$.

  In the following consider $\lambda \in (-\infty, \kappa_1)$.  
  Since $\widetilde{A}_0$ is a restriction of $(-\Delta)_\max^{a}$ it is clear that 
  $\dom \widetilde{A}_0 \subset \dom (-\Delta)_\max^{a} = \mathfrak{r} H^{(a-1)(2a)}([\alpha, \beta])$ and with the help of the closed graph theorem one obtains that 
  \begin{equation*}
    (\widetilde{A}_0 - \lambda)^{-1}: L^2(\alpha, \beta) \rightarrow \mathfrak{r} H^{(a-1)(2a)}([\alpha, \beta])
  \end{equation*}
  is  bounded. It follows that the block diagonal operator 
  \begin{equation} \label{block_operator}
  \begin{split}
  &\begin{pmatrix}
   (\widetilde{A}_0 - \lambda)^{-1} & 0 \\ 0 & 0_{L^2(\dR\backslash (\alpha,\beta))}
   \end{pmatrix}: \\
   &\quad\begin{pmatrix} L^2(\alpha,\beta) \\ \{ 0_{L^2(\dR\backslash (\alpha,\beta))} \} \end{pmatrix} \cong \dot{H}^0([\alpha, \beta]) = H^{(a-1)(0)}([\alpha, \beta]) \rightarrow H^{(a-1)(2a)}([\alpha, \beta])
   \end{split}
  \end{equation}
  is also bounded. We will show at the end of the proof that the interpolation property
  \begin{equation} \label{interpolation_Hoermander_space}
    \big[ H^{(a-1)(2a)}([\alpha, \beta]), H^{(a-1)(0)}([\alpha, \beta]) \big]_\theta = H^{(a-1)(2 a \theta)}([\alpha, \beta]), \quad \theta \in (0,1),
  \end{equation}
  holds.
  Since the operator in~\eqref{block_operator} is a positive self-adjoint operator in 
  $\dot{H}^0([\alpha, \beta])$ it 
  follows with an interpolation argument, see \cite[Section~2.1 in Chapter~I]{LM72}, and~\eqref{interpolation_Hoermander_space} that for any $\theta \in (0, 1)$ also
    \begin{equation*}
  \begin{pmatrix}
   (\widetilde{A}_0 - \lambda)^{-\theta} & 0 \\ 0 & 0_{L^2(\dR\backslash (\alpha,\beta))}
   \end{pmatrix}:\begin{pmatrix} L^2(\alpha,\beta) \\\{ 0_{L^2(\dR\backslash (\alpha,\beta))} \} \end{pmatrix} \cong \dot{H}^0([\alpha, \beta]) \rightarrow H^{(a-1)(2a\theta)}([\alpha, \beta])
  \end{equation*}
  is bounded. This implies that
  \begin{equation*}
    (\widetilde{A}_0 - \lambda)^{-\theta}: L^2(\alpha, \beta) \rightarrow \mathfrak{r} H^{(a-1)(2 a \theta)}([\alpha, \beta])
  \end{equation*}
  is bounded. In particular, for $\theta \in (\frac{1}{2}(1-\frac{1}{2a}), \frac{1}{2})$ one has $2a\theta\in (a-\frac{1}{2}, a)$ and thus 
  $2a\theta-(a-1)\in (\frac{1}{2},1)$, so that
  Lemma~\ref{lemma_traces}~(i) can be applied. It follows that the mapping
  \begin{equation*}
    \widetilde{\Upsilon}_1^{(a)} (\widetilde{A}_0 - \lambda)^{-\theta} = -\Upsilon_0^{(a)} (\widetilde{A}_0 - \lambda)^{-\theta}: L^2(\alpha, \beta) \rightarrow \mathbb{C}^2
  \end{equation*}
  is bounded. Therefore, we can apply \cite[Theorem~6.1]{BLLR18}, according to which there exists a constant $C$ that is independent of $\lambda$ such that
  \begin{equation*}
    \big\| (M^{(a)}(\lambda))^{-1} \big\| = \big\| \widetilde{M}^{(a)}(\lambda) \big\| \leq \frac{C}{(\kappa_1 - \lambda)^{1 - 2 \theta}}, \quad \lambda \in (-\infty, \kappa_1),
  \end{equation*}
  holds. From this~\eqref{equation_M_infty} and thus also the claim of this proposition follows.

  It remains to prove~\eqref{interpolation_Hoermander_space}, which will be done with the help of \cite[Theorem~14.3 in Chapter~I]{LM72}. In the following we explain how the spaces in \cite[Theorem~14.3 in Chapter~I]{LM72} can be chosen and why the assumptions there are satisfied. First, set $X = Y = \dot{H}^0([\alpha, \beta]) = H^{(a-1)(0)}([\alpha, \beta])$, $\Phi = \dot{\mathcal{S}}'([\alpha, \beta])$, $\Psi = \mathfrak{r} \dot{\mathcal{S}}'([\alpha, \beta])$, and $\partial = \mathfrak{r} \Lambda_+^{(a-1)}$. Since $\Lambda_+^{(a-1)}$ is a classical pseudodifferential operator that preserves the support in $[\alpha, \beta]$, $\partial$ is well defined and continuous from $\Phi$ to $\Psi$. Next, set $\mathscr{X} = \overline{H}^{a+1}(\alpha, \beta)$, $\mathscr{Y} = \overline{H}^{1-a}(\alpha, \beta)$. Then, 
  \begin{equation}\label{notation_lions_magenes}
    (X)_{\partial, \mathscr{X}} := \bigl\{ f \in \dot{H}^0([\alpha, \beta]): \mathfrak{r} \Lambda_+^{(a-1)} f \in \overline{H}^{a+1}(\alpha, \beta) \bigr\} = H^{(a-1)(2a)}([\alpha, \beta]);
  \end{equation}
  the inclusion $\subset$ in the latter equality is clear by the second line in~\eqref{eq:basic-transmi-def}, and the other inclusion $\supset$ also follows from the second line in~\eqref{eq:basic-transmi-def} using that $H^{(a-1)(2a)}([\alpha, \beta]) \subset L^2(\alpha, \beta)$, as $s = 2a \geq 0$. Likewise, one has that
  \begin{equation*}
    (Y)_{\partial, \mathscr{Y}} := \bigl\{ f \in \dot{H}^0([\alpha, \beta]): \mathfrak{r} \Lambda_+^{(a-1)} f \in \overline{H}^{a-1}(\alpha, \beta) \bigr\} = H^{(a-1)(0)}([\alpha, \beta]) = L^2(\alpha, \beta),
  \end{equation*}
  where the latter equality is true by Lemma~\ref{lemma_H_mu_s}~(i). To proceed, set
  $\tilde{\mathscr{X}} = \tilde{\mathscr{Y}} = \mathfrak{r} \dot{H}^{1-a}([\alpha, \beta])$. Then, with Lemma~\ref{lemma_H_dot_H_bar} we see that 
  \begin{equation*}
    \mathscr{X} = \overline{H}^{1+a}(\alpha, \beta) \subset \overline{H}^{1-a}(\alpha, \beta) =  \mathfrak{r} \dot{H}^{1-a}([\alpha, \beta]) = \tilde{\mathscr{X}}
  \end{equation*}
  and $\mathscr{Y} = \overline{H}^{1-a}(\alpha, \beta) = \mathfrak{r} \dot{H}^{1-a}([\alpha, \beta]) = \tilde{\mathscr{Y}}$, i.e. assumption~i) in \cite[(14.23) in Chapter~I]{LM72} is fulfilled. Moreover, due to the properties of $\Lambda_+^{(a-1)}$ in~\eqref{mapping_property_Lambda} one has that $\partial = \mathfrak{r} \Lambda_+^{(a-1)}$ is bounded from $X = Y = \dot{H}^0([\alpha, \beta])$ to $\tilde{\mathscr{X}} = \tilde{\mathscr{Y}} = \mathfrak{r} \dot{H}^{1-a}([\alpha, \beta])$, i.e. ii) in \cite[(14.23) in Chapter~I]{LM72} is satisfied. Eventually, set $\mathscr{G} = (\Lambda^{(a-1)}_+)^{-1} \mathfrak{e}$. Using again the mapping properties of $\Lambda_+^{(a-1)}$ in~\eqref{mapping_property_Lambda} and~\eqref{guenstig} one has that $\mathscr{G}$ is bounded from $\tilde{\mathscr{X}} = \tilde{\mathscr{Y}} = \mathfrak{r} \dot{H}^{1-a}([\alpha, \beta])$ to $X = Y = \dot{H}^0([\alpha, \beta])$ and that $\mathscr{G}$ is the right inverse of $\partial$. Hence, also iii) in \cite[(14.23) in Chapter~I]{LM72} is true (with $r \equiv 0$). Therefore, we conclude from \cite[Theorem~14.3 in Chapter~I]{LM72} that for $\theta \in (0,1)$
  \begin{equation*} 
    \begin{split}
      \big[ H^{(a-1)(2a)}([\alpha, \beta])&, H^{(a-1)(0)}([\alpha, \beta]) \big]_\theta \\
      &= \big([\dot{H}^0([\alpha, \beta]),\dot{H}^0([\alpha, \beta])]_\theta \big)_{\partial, [\overline{H}^{1+a}(\alpha, \beta), \overline{H}^{1-a}(\alpha, \beta)]_\theta} \\
      &=\big(\dot{H}^0([\alpha, \beta]) \big)_{\partial, \overline{H}^{2 a \theta + 1 - a}(\alpha, \beta)} \\
      &=H^{(a-1)(2 a \theta)}([\alpha, \beta]), 
    \end{split}
  \end{equation*}
  where for the last equality again a similar argument as in~\eqref{notation_lions_magenes} is used. Thus,~\eqref{interpolation_Hoermander_space} is true.
\end{proof}

Since $(-\Delta)_\textup{D}^a$ is the Friedrichs extension of $(-\Delta)_\min^{a}$, we can relate it to its associated quadratic form. Note that this quadratic form is used as starting point for the analysis of a Dirichlet realization of the fractional Laplacian in, e.g., \cite{BonSirVaz15, D12, FG16, K12, SV14}; we refer to the review paper \cite{F18} for further references.
Recall that $\mathfrak{e}$ and $\mathfrak{r}$ are the extension and restriction operator defined by~\eqref{def_extension_op} and~\eqref{def_restriction_op}, respectively.

\begin{corollary} \label{corollary_quadratic_form}
  The operator $(-\Delta)_\textup{D}^a$ is the unique self-adjoint operator in $L^2(\alpha, \beta)$ that is associated with the closed quadratic form
  \begin{equation} \label{def_form}
    \begin{split}
      q[f] = \frac{c_{a}}{2}\int_{\R}\int_{\R} \frac{|\mathfrak{e} f(x)-\mathfrak{e} f(y)|^2}{|x-y|^{1+2a}}d x d y, \quad \dom q = \mathfrak{r} \dot{H}^a([\alpha, \beta]),
    \end{split}
  \end{equation}
  where $c_a$ is given by~\eqref{def_c_a}.
\end{corollary}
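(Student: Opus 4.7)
The plan is to combine Proposition~\ref{proposition_Dirichlet_Friedrichs}, according to which $(-\Delta)^a_\textup{D}$ is the Friedrichs extension of $(-\Delta)^a_\min$, with the standard characterization of the Friedrichs extension as the self-adjoint operator associated to the closure of the quadratic form $f\mapsto ((-\Delta)^a_\min f,f)_{L^2(\alpha,\beta)}$ on $\dom(-\Delta)^a_\min$. It therefore suffices to show that this closure coincides with $q$ on $\mathfrak{r}\dot H^a([\alpha,\beta])$; the uniqueness part in the statement then follows from the first representation theorem.

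First I would verify that for every $f\in\dom(-\Delta)^a_\min=\mathfrak{r}\dot H^{2a}([\alpha,\beta])$ one has the identity $((-\Delta)^a_\min f,f)_{L^2(\alpha,\beta)}=q[f]$. Since $\mathfrak{e} f\in H^{2a}(\mathbb{R})\subset H^a(\mathbb{R})$, the Fourier and singular-integral representations of $(-\Delta)^a$ on $\mathbb{R}$ yield
\begin{equation*}
((-\Delta)^a\mathfrak{e} f,\mathfrak{e} f)_{L^2(\mathbb{R})}=\|(-\Delta)^{a/2}\mathfrak{e} f\|_{L^2(\mathbb{R})}^2=\frac{c_a}{2}\iint_{\mathbb{R}\times\mathbb{R}}\frac{|\mathfrak{e} f(x)-\mathfrak{e} f(y)|^2}{|x-y|^{1+2a}}\,dx\,dy,
\end{equation*}
and since $\mathfrak{e} f$ is supported in $[\alpha,\beta]$, the left-hand side reduces to $((-\Delta)^a_\min f,f)_{L^2(\alpha,\beta)}$, as required.

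Next I would check that $(q,\mathfrak{r}\dot H^a([\alpha,\beta]))$ is itself a closed, nonnegative quadratic form in $L^2(\alpha,\beta)$. This is immediate from the observation that $\mathfrak{e}$ is an isometric bijection between $\mathfrak{r}\dot H^a([\alpha,\beta])$ endowed with the form norm $(\|\cdot\|^2_{L^2(\alpha,\beta)}+q[\cdot])^{1/2}$ and $\dot H^a([\alpha,\beta])$ endowed with a constant-multiple equivalent of the $H^a(\mathbb{R})$-norm, together with the fact that $\dot H^a([\alpha,\beta])$ is by definition a closed subspace of $H^a(\mathbb{R})$.

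The decisive, and I expect hardest, step is to prove that $\dom(-\Delta)^a_\min=\mathfrak{r}\dot H^{2a}([\alpha,\beta])$ is dense in $\mathfrak{r}\dot H^a([\alpha,\beta])$ in this form norm, or equivalently, that $\dot H^{2a}([\alpha,\beta])$ is dense in $\dot H^a([\alpha,\beta])$ in $H^a(\mathbb{R})$. The hypothesis $a\in(\frac{1}{2},1)$ is essential here: by \cite[Theorem~3.29]{M00} both $\dot H^{2a}([\alpha,\beta])$ and $\dot H^a([\alpha,\beta])$ coincide with the closure of $C_c^\infty(\alpha,\beta)$ in $H^{2a}(\mathbb{R})$ and $H^a(\mathbb{R})$, respectively, so the inclusions $C_c^\infty(\alpha,\beta)\subset\dot H^{2a}([\alpha,\beta])\subset\dot H^a([\alpha,\beta])$ yield the desired density. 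Putting the three steps together identifies the closure of the operator form with $q$ on $\mathfrak{r}\dot H^a([\alpha,\beta])$, and the corollary follows.
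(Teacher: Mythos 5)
Your proposal is correct and follows essentially the same route as the paper: reduce to Proposition~\ref{proposition_Dirichlet_Friedrichs}, identify $((-\Delta)^a_\min f,f)$ with the Gagliardo/Slobodeckii seminorm of $\mathfrak{e}f$, note that $\dot H^a([\alpha,\beta])$ is a closed subspace of $H^a(\mathbb{R})$, and conclude by the density of $\dot H^{2a}([\alpha,\beta])$ in $\dot H^a([\alpha,\beta])$ with respect to the $H^a(\mathbb{R})$-norm. The only difference is that you spell out the density step via McLean's identification of $\dot H^s([\alpha,\beta])$ with the closure of $C_c^\infty(\alpha,\beta)$ (a detail the paper merely asserts), although your remark that $a\in(\tfrac12,1)$ is ``essential'' for that particular density is an overstatement, since the argument works for any positive order.
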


\begin{proof}
  Recall that the Friedrichs extension of $(-\Delta)_\min^{a}$ is the unique self-adjoint operator associated to the closure of the quadratic form
  \begin{equation*}
  q_\min[f,g]=((-\Delta)_\min^{a} f, g),\quad \dom q_\min =\dom (-\Delta)_\min^{a}= \mathfrak{r} \dot{H}^{2a}([\alpha, \beta]);
  \end{equation*}
  thus we have to verify that the closure of $q_\min$ is the form $q$ in \eqref{def_form}.
  In fact, note first that for
  $f \in \dom (-\Delta)_\min^{a}$ a direct calculation involving~\eqref{def_fract_Laplace_R} yields 
  \begin{equation*}
    ((-\Delta)_\min^{a} f, f) = \frac{c_{a}}{2}\int_{\R}\int_{\R} \frac{|\mathfrak{e} f(x)-\mathfrak{e} f(y)|^2}{|x-y|^{1+2a}}d x d y,
  \end{equation*}
  where the right hand side coincides with the Sobolev-Slobodeckii semi-norm in $H^a(\mathbb{R})$.
  This shows
  \begin{equation*}
   q_\min[f]+ \Vert f\Vert_{L^2(\alpha,\beta)}^2 = \Vert f\Vert_{H^a(\alpha,\beta)}^2,\quad f\in\dom q_\min.
  \end{equation*}
  Since $\dot{H}^{a}([\alpha, \beta])$ is a closed subspace of $H^a(\mathbb{R})$ and 
  and since $\dot{H}^{2a}([\alpha, \beta])$ is dense in $\dot{H}^{a}([\alpha, \beta])$ (with respect to the norm $\Vert\cdot\Vert_{H^a(\mathbb R)}$)
  it follows that $\dom q_\min$ is dense in $\mathfrak{r}\dot{H}^{a}([\alpha, \beta])$ 
  (with respect to the norm $\Vert\cdot\Vert_{H^a(\alpha,\beta)}$). Therefore, the closure of the form $q_\min$ coincides with 
  the quadratic form $q$ in \eqref{def_form}.
\end{proof}

It is known that the operator associated with the form~\eqref{def_form} 
generates a sub-Markovian process (see, e.g., \cite[Section~2]{BBC03}), where the corresponding Dirichlet form is denoted by $(\mathcal C,\mathcal F^D)$), hence a strongly-continuous semigroup $(e^{-t(-\Delta)^a_{\mathrm{D}}})_{t\ge 0}$ on $L^2(\alpha,\beta)$ that is positivity-preserving and $L^\infty$-contractive. This means that 
\[
e^{-t(-\Delta)^a_{\mathrm{D}}}f(x)=\int_\alpha^\beta p_t(x,y)f(y)dy,\qquad\hbox{for all }t>0,\ f\in L^2(\alpha,\beta),\ x\in (\alpha,\beta),
\]
for an integral kernel $p_t$ such that $p_t(x,y)\in [0,1]$ for all $t>0$ and a.e.\ $x,y\in (\alpha,\beta)$.
In particular, we can apply now a result from \cite{M24} to obtain a pointwise upper bound for all eigenfunctions of $(-\Delta)^a_\textup{D}$.

\begin{theorem}
Each eigenpair $(\lambda^{\mathrm{D}}_n,\varphi_n)$ of $(-\Delta)^a_{\mathrm{D}}$ satisfies the pointwise estimate
\[
\frac{|\varphi_n(x)|}{\|\varphi_n\|_\infty}\le  \frac{\lambda^{\mathrm{D}}_n}{\Gamma(2a+1)} \left(\frac{\beta-\alpha}{2}\right)^a \left(1-\frac{4}{(\beta-\alpha)^2} \left( x-\frac{\alpha+\beta}{2} \right)^2\right)^{a},\qquad x\in (\alpha, \beta).
\]
\end{theorem}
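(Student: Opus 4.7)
The plan is to invoke the general pointwise eigenfunction estimate from \cite{M24}, which in the present context reduces the theorem to identifying the \emph{torsion function} of $(-\Delta)^a_{\mathrm{D}}$. Concretely, \cite{M24} shows that whenever $T$ is a positive self-adjoint operator on $L^2(\Omega)$ generating a sub-Markovian semigroup and $\tau\in\dom T$ satisfies $T\tau=\mathbf{1}$, then every eigenpair $(\lambda,\varphi)$ of $T$ obeys $|\varphi(x)|\le\lambda\,\|\varphi\|_\infty\,\tau(x)$. The short underlying idea writes $\varphi=\lambda T^{-1}\varphi$, uses that positivity-preservation of $e^{-tT}$ carries over to $T^{-1}=\int_0^\infty e^{-tT}dt$, and then applies the pointwise chain $|T^{-1}\varphi|\le T^{-1}|\varphi|\le\|\varphi\|_\infty T^{-1}\mathbf{1}=\|\varphi\|_\infty\tau$. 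The sub-Markovianity of $(e^{-t(-\Delta)^a_{\mathrm{D}}})_{t\ge 0}$ has been recorded in the paragraph preceding the theorem, so only the torsion function must be produced.

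The classical Getoor identity (see, e.g., \cite[Table~1]{D12}), combined with Legendre's duplication formula $\Gamma(a+\tfrac12)\Gamma(a+1)=2^{-2a}\sqrt{\pi}\,\Gamma(2a+1)$, yields $(-\Delta)^a\bigl[(1-y^2)_+^a\bigr](y)=\Gamma(2a+1)$ pointwise on $(-1,1)$. Setting $\ell:=(\beta-\alpha)/2$ and $c:=(\alpha+\beta)/2$ and exploiting the scaling identity $(-\Delta)^a[u((\cdot-c)/\ell)](x)=\ell^{-2a}\,(-\Delta)^a u((x-c)/\ell)$, a translation and dilation of Getoor's function produces the candidate
\begin{equation*}
\tau(x):=\frac{\ell^{2a}}{\Gamma(2a+1)}\left(1-\frac{4}{(\beta-\alpha)^2}\Bigl(x-\frac{\alpha+\beta}{2}\Bigr)^2\right)_+^a,\qquad x\in\mathbb{R},
\end{equation*}
which satisfies $(-\Delta)^a\tau=\mathbf{1}$ pointwise on $(\alpha,\beta)$.

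It remains to verify that $\tau\in\dom(-\Delta)^a_{\mathrm{D}}=\mathfrak{r} H^{a(2a)}([\alpha,\beta])$, so that the pointwise identity above upgrades to $(-\Delta)^a_{\mathrm{D}}\tau=\mathbf{1}$ in $L^2(\alpha,\beta)$. Since $\tau$ factors as $t^a\cdot g$ with $g\in\overline{C}^\infty([\alpha,\beta])$, it lies in $t^a\overline{C}^\infty([\alpha,\beta])$, which by Lemma~\ref{lemma_H_mu_s}(v) and~\eqref{eq:basic-transmi-def} is contained in $\mathfrak{r}H^{a(2a)}([\alpha,\beta])$; Lemma~\ref{lemma_traces}(i) furthermore shows that the weighted Dirichlet trace $\gamma_0^{(a-1)}\tau$ vanishes, so $\tau\in\dom(-\Delta)^a_{\mathrm{D}}$ in view of~\eqref{def_Dirichlet}. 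Feeding this $\tau$ into the \cite{M24} estimate then yields the advertised bound. The main obstacle I anticipate is precisely this last compatibility step: the Getoor identity lives on all of $\mathbb{R}$ (where $(-\Delta)^a$ acts on the zero-extension of $\tau$), and one must translate this into the distributional identity $\mathfrak{r}(-\Delta)^a\mathfrak{e}\tau=\mathbf{1}$ in $L^2(\alpha,\beta)$ before the sub-Markovian argument can be applied; the H\"ormander transmission framework of Section~\ref{section_Hoermander_transmission_spaces} makes this possible but has to be invoked carefully.
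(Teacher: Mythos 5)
Your proposal follows exactly the route the paper takes: produce the torsion function of $(-\Delta)^a_{\mathrm{D}}$ from the Getoor/Dyda identity in \cite[Table~1]{D12}, check that it lies in $\dom(-\Delta)^a_{\mathrm{D}}$, and feed it into the landscape-function bound of \cite[Theorem~1.1]{M24} together with the sub-Markov property recorded before the theorem. Your domain verification via $t^a\overline{C}^\infty([\alpha,\beta])\subset\mathfrak{r}H^{a(2a)}([\alpha,\beta])$ and Lemma~\ref{lemma_traces}~(i) is, if anything, more explicit than the paper's one-line appeal to the computations in~\eqref{v_1_t} and~\eqref{upsi000}. (A cosmetic caveat: since $t$ has a kink at the midpoint, $\tau/t^a$ is continuous but not literally in $\overline{C}^\infty([\alpha,\beta])$; note that $\tau(x)=(x-\alpha)^a(\beta-x)^a/\Gamma(2a+1)$ is smooth in the interior, so only the boundary behaviour matters, and the same convention is used tacitly for $v_1,v_2$ in the paper.)

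There is, however, one substantive discrepancy you should not gloss over: your scaling gives the prefactor $\ell^{2a}=\bigl(\tfrac{\beta-\alpha}{2}\bigr)^{2a}$, while the stated estimate (and the paper's proof) carries $\bigl(\tfrac{\beta-\alpha}{2}\bigr)^{a}$. Your computation is the one consistent with the degree-$2a$ homogeneity $(-\Delta)^a\bigl[u(\cdot/\ell)\bigr](x)=\ell^{-2a}\bigl[(-\Delta)^a u\bigr](x/\ell)$; with the paper's normalization one gets $(-\Delta)^a f=\ell^{-a}$ rather than $1$ on $(\alpha,\beta)$. Consequently your argument proves the inequality with $\bigl(\tfrac{\beta-\alpha}{2}\bigr)^{2a}$ in place of $\bigl(\tfrac{\beta-\alpha}{2}\bigr)^{a}$, i.e.\ not literally the statement as printed; since the printed bound is not even scale-consistent (it would force $\lambda_1^{\mathrm D}\gtrsim\ell^{-a}$ while $\lambda_1^{\mathrm D}$ scales like $\ell^{-2a}$, so it fails for long intervals), the mismatch is an error in the statement rather than in your reasoning, but a complete write-up must say which version is being proved.
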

\begin{proof}
  First, by \cite[Table~1]{D12} the function
  \begin{equation*}
    \widetilde{f}(y) := \frac{1}{\Gamma(2a+1)} (1-y^2)^{a}_+, \quad  y \in \dR,
  \end{equation*}
  satisfies the relation
  \begin{equation*}
    (-\Delta)^a \widetilde{f}(y) = 1, \quad  y \in (-1, 1).
  \end{equation*}
  Therefore, the function
  \begin{equation*}
    f(x) := \frac{1}{\Gamma(2a+1)} \left(\frac{\beta-\alpha}{2}\right)^a \left(1-\frac{4}{(\beta-\alpha)^2} \left( x-\frac{\alpha+\beta}{2} \right)^2\right)^{a}, \quad  x\in (\alpha, \beta),
  \end{equation*}
  satisfies 
  \begin{equation*}
    (-\Delta)^a f(x) = 1, \quad  x \in (\alpha, \beta).
  \end{equation*}
  With a similar calculation as in \eqref{v_1_t} and \eqref{upsi000} one sees that  $\Upsilon_0^{(a)} f=0$ and hence $f \in \dom (-\Delta)_\textup{D}^a$. 
Using the known fact that $-(-\Delta)^a_{\mathrm{D}}$ generates a positivity-preserving operator, we deduce the claim from \cite[Theorem~1.1]{M24}.
\end{proof}

\subsection{Semibounded self-adjoint extensions}

Since $(-\Delta)^a_\textup{D} = (-\Delta)_\max^{a} \upharpoonright \ker \Upsilon_0^{(a)}$ coincides with the Friedrichs extension of $(-\Delta)_\min^{a}$ 
by Proposition~\ref{proposition_Dirichlet_Friedrichs} the general principles from Section~\ref{section_semibounded_abstract} imply
further results about the ordering of self-adjoint extensions of $(-\Delta)_\min^{a}$. First, we describe the Krein--von Neumann extension of $(-\Delta)_\min^{a}$, i.e. the smallest nonnegative self-adjoint extension of $(-\Delta)_\min^{a}$; cf.~\eqref{sk}.
In the following result the boundary condition is explicit as $M^{(a)}(0)$ is known from Proposition~\ref{proposition_M_0}; 
again, by setting formally $a=1$, the result below reduces to the well-known formula for the Krein--von Neumann extension of the classical Laplacian in Example~\ref{example_KvN}.

\begin{proposition} \label{proposition_Krein_von_Neumann}
  The Krein-von Neumann extension of $(-\Delta)_\min^{a}$ is given by 
  \begin{equation*}
    \begin{split}
      (-\Delta)^a_\textup{K} f &= (-\Delta)_\max^{a} f, \\
      \dom (-\Delta)^a_\textup{K} &= \bigg\{ f \in H^{(a-1)(2a)}([\alpha, \beta]): \\
      &\quad \qquad \qquad  \frac{1}{\beta - \alpha} \begin{pmatrix} -a & 1 \\ 1 & -a \end{pmatrix} \begin{pmatrix} (t^{1-a}f)(\alpha) \\ (t^{1-a} f)(\beta) \end{pmatrix} = \begin{pmatrix} (t^{1-a}f)'(\alpha) \\ -(t^{1-a} f)'(\beta) \end{pmatrix} \bigg\}.
    \end{split}
  \end{equation*}
\end{proposition}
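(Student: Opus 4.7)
The plan is to invoke the abstract characterization~\eqref{sk} of the Krein--von Neumann extension in the boundary triplet framework and then to substitute the explicit formula for $M^{(a)}(0)$ obtained in Proposition~\ref{proposition_M_0}. Since we have already identified $(-\Delta)_\textup{D}^a = (-\Delta)_\max^a\upharpoonright\ker\Upsilon_0^{(a)}$ as the Friedrichs extension of $(-\Delta)_\min^a$ (Theorem~\ref{theorem_bt} together with Proposition~\ref{proposition_Dirichlet_Friedrichs}), the boundary triplet $\{\mathbb C^2,\Upsilon_0^{(a)},\Upsilon_1^{(a)}\}$ satisfies precisely the hypothesis made at the beginning of Section~\ref{section_semibounded_abstract} under which formula~\eqref{sk} applies.

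Before applying~\eqref{sk} I would briefly check the positivity assumption~\eqref{semi} for $(-\Delta)_\min^a$. By Lemma~\ref{lemma_Dirichlet_basic} the Friedrichs extension $(-\Delta)_\textup{D}^a$ is a positive self-adjoint operator with purely discrete spectrum accumulating only at $+\infty$; its lowest eigenvalue $\kappa_0$ is therefore strictly positive (it cannot vanish as $0$ would be an eigenvalue, which would force the corresponding eigenfunction in $\mathfrak r H^{a(2a)}([\alpha,\beta])$ to have zero Sobolev--Slobodeckii seminorm after extension by zero, hence to be identically zero). As $(-\Delta)_\min^a\subset(-\Delta)_\textup{D}^a$, condition~\eqref{semi} holds with $\kappa=\kappa_0>0$, so that the abstract results of Section~\ref{section_semibounded_abstract} are available.

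By~\eqref{sk} the Krein--von Neumann extension is then
\begin{equation*}
(-\Delta)_\textup{K}^a = (-\Delta)_\max^a\upharpoonright\ker\bigl(\Upsilon_1^{(a)}-M^{(a)}(0)\Upsilon_0^{(a)}\bigr).
\end{equation*}
It remains to spell out this boundary condition. Substituting $M^{(a)}(0)=\frac{a}{\beta-\alpha}\bigl(\begin{smallmatrix}-a&1\\1&-a\end{smallmatrix}\bigr)$ from Proposition~\ref{proposition_M_0} together with the definitions~\eqref{def_Ups0}--\eqref{def_Ups1} of $\Upsilon_0^{(a)}$ and $\Upsilon_1^{(a)}$, the condition $M^{(a)}(0)\Upsilon_0^{(a)} f=\Upsilon_1^{(a)} f$ reads
\begin{equation*}
\frac{a\,\Gamma(a)}{\beta-\alpha}\begin{pmatrix}-a&1\\1&-a\end{pmatrix}\begin{pmatrix}(t^{1-a}f)(\alpha)\\(t^{1-a}f)(\beta)\end{pmatrix}=\Gamma(a+1)\begin{pmatrix}(t^{1-a}f)'(\alpha)\\-(t^{1-a}f)'(\beta)\end{pmatrix}.
\end{equation*}
Using $\Gamma(a+1)=a\,\Gamma(a)$ to cancel the prefactor yields exactly the boundary condition stated in the proposition, and $\dom(-\Delta)_\textup{K}^a\subset\dom(-\Delta)_\max^a=\mathfrak r H^{(a-1)(2a)}([\alpha,\beta])$ is automatic.

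There is no real obstacle here beyond bookkeeping; the only point that could require mild care is the verification of strict positivity of $(-\Delta)_\min^a$, but this follows immediately from the positivity and discreteness of the spectrum of $(-\Delta)_\textup{D}^a$ together with the fact that $0\notin\sigma((-\Delta)_\textup{D}^a)$. Once this is in place the proposition reduces to the straightforward algebraic simplification above.
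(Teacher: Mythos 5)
Your proof is correct and follows exactly the route the paper intends (the paper states this proposition without a written proof, as an immediate consequence of the abstract formula~\eqref{sk} applied with $A_0=(-\Delta)^a_{\textup D}=S_F$ from Proposition~\ref{proposition_Dirichlet_Friedrichs} and the explicit $M^{(a)}(0)$ from Proposition~\ref{proposition_M_0}). Your extra verification of the strict positivity of $(-\Delta)^a_{\textup D}$, and hence of condition~\eqref{semi} for $(-\Delta)^a_{\min}$, together with the cancellation via $\Gamma(a+1)=a\,\Gamma(a)$, is exactly the bookkeeping needed.
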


Similarly, Proposition~\ref{auchnoch} together with the explicit form of
$M^{(a)}(0)$ lead to a characterization of all nonnegative extensions of $(-\Delta)_\min^{a}$.

\begin{proposition}
Let $\{\mathbb{C}^2,\Upsilon_0^{(a)},\Upsilon_1^{(a)}\}$ be the boundary triplet for $(-\Delta)_\max^{a}$ defined in~\eqref{def_Ups0}-\eqref{def_Ups1}, suppose that $\mathcal A,\mathcal B\in \mathbb{C}^{2 \times 2}$ satisfy Assumption~\ref{assumption_parameter}, and let $(-\Delta)^a_{\cA, \cB}$ be given by~\eqref{def_extension_fractional_Laplace}.
Then the following equivalence holds:
\begin{equation*}
0 \leq (-\Delta)^a_{\cA, \cB} \quad \text{if and only if} \quad
 \cA \cB^*=\cB \cA^* \geq\frac{a}{\beta - \alpha} \cA \begin{pmatrix} -a & 1 \\ 1 & -a \end{pmatrix}  \cA^*.
\end{equation*}
\end{proposition}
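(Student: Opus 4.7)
The plan is to apply Proposition~\ref{auchnoch} directly: this proposition is a routine specialization of \eqref{condition_semibounded1} once the structural prerequisites have been verified and $M^{(a)}(0)$ is substituted from Proposition~\ref{proposition_M_0}. The operator $(-\Delta)^a_{\cA, \cB}$ is self-adjoint by Proposition~\ref{proposition_self_adjoint} (Assumption~\ref{assumption_parameter} being in force), and the corresponding parameter relation $\Theta$ admits the representation~\eqref{nanu33}, so we are in the situation where~\eqref{condition_semibounded1} is applicable.

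First I would verify that the boundary triplet $\{\mathbb{C}^2, \Upsilon_0^{(a)}, \Upsilon_1^{(a)}\}$ fits the setup of Proposition~\ref{auchnoch}, which requires $S_F = S^* \upharpoonright \ker \Upsilon_0$. By Proposition~\ref{proposition_Dirichlet_Friedrichs} combined with \eqref{equation_A_0}, the Friedrichs extension of $(-\Delta)^a_{\min}$ is precisely $(-\Delta)^a_{\textup{D}} = (-\Delta)^a_{\max} \upharpoonright \ker \Upsilon_0^{(a)}$, so this hypothesis is met. Next I would check that $x=0$ lies strictly below $\kappa$, the lower bound of $(-\Delta)^a_{\min}$, so that $M^{(a)}(0) \in \mathcal{L}(\mathbb{C}^2)$ and \eqref{condition_semibounded1} is applicable at $x=0$. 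By Lemma~\ref{lemma_Dirichlet_basic}, the Friedrichs extension $(-\Delta)^a_{\textup{D}}$ is positive with purely discrete spectrum accumulating only at $+\infty$, hence its smallest eigenvalue $\kappa_0$ is strictly positive; since the lower bound of the minimal operator satisfies $\kappa \geq \kappa_0 > 0$, the choice $x=0 < \kappa$ is admissible.

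With these prerequisites in place, \eqref{condition_semibounded1} specialised to $x=0$ asserts that
\begin{equation*}
0 \leq (-\Delta)^a_{\cA, \cB} \quad \text{if and only if} \quad \cA \cB^* = \cB \cA^* \geq \cA M^{(a)}(0) \cA^*,
\end{equation*}
and substituting the explicit formula
\begin{equation*}
M^{(a)}(0) = \frac{a}{\beta - \alpha} \begin{pmatrix} -a & 1 \\ 1 & -a \end{pmatrix}
\end{equation*}
from Proposition~\ref{proposition_M_0} yields exactly the claim. There is no substantive obstacle here; the genuine difficulty of the proof has already been absorbed into two earlier results, namely the identification of the Friedrichs extension via the limiting behaviour of the Weyl function (Proposition~\ref{proposition_Dirichlet_Friedrichs}, whose proof relies on the interpolation identity for H\"ormander transmission spaces and the bound from~\cite{BLLR18}) and the explicit computation of $\ker (-\Delta)^a_{\max}$ using the Dyda functions $v_1, v_2$ that yields $M^{(a)}(0)$ (Proposition~\ref{proposition_M_0}). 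What remains is a mechanical substitution.
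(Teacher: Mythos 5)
Your proposal is correct and matches the paper's own (implicit) argument: the paper states this proposition without a separate proof, noting only that it follows from Proposition~\ref{auchnoch} combined with the explicit formula for $M^{(a)}(0)$ from Proposition~\ref{proposition_M_0}, exactly as you do. Your verification of the prerequisites (that $S_F=(-\Delta)^a_{\textup{D}}=(-\Delta)^a_{\max}\upharpoonright\ker\Upsilon_0^{(a)}$ via Proposition~\ref{proposition_Dirichlet_Friedrichs} and that $0<\kappa_0\leq\kappa$ via Lemma~\ref{lemma_Dirichlet_basic}) is accurate and, if anything, more explicit than the paper.
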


Finally, we turn our attention to the Neumann realization of the restricted fractional Laplacian that is defined by
\begin{equation} \label{def_Neumann}
  \begin{split}
    (-\Delta)_\textup{N}^a f &:= (-\Delta)_\max^a f, \\
    \dom (-\Delta)^a_\textup{N} &:= \bigl\{ f \in H^{(a-1)(2a)}([\alpha, \beta]): (t^{1-a} f)'(\alpha) = (t^{1-a} f)'(\beta)  = 0 \bigr\}.
  \end{split}
\end{equation}
The regularity of solutions of equations involving the higher dimensional counterpart of $(-\Delta)_\textup{N}^a$ is studied in \cite{G14}. We emphasize that $(-\Delta)_\textup{N}^a$ is a different fractional Neumann realization as the \textit{spectral fractional Neumann Laplacian} investigated in \cite{G16}, the \textit{regional fractional Neumann Laplacian} studied in \cite{W15}, and the fractional Laplacian with \textit{nonlocal Neumann boundary conditions} introduced in \cite{DROV17}, and these operators have different properties as the ones found for $(-\Delta)_\textup{N}^a$ in Proposition~\ref{proposition_Neumann} below; we refer to \cite[Section 6.2 and 6.3]{G16} for an overview of different Neumann boundary conditions for the fractional Laplace operator.

Observe that with the boundary triplet in~\eqref{def_Ups0}-\eqref{def_Ups1} the operator \eqref{def_Neumann} can also be written in the form 
\begin{equation}\label{neumi}
(-\Delta)^a_\textup{N} = (-\Delta)^a_\max \upharpoonright \ker \Upsilon_1^{(a)},
\end{equation}
from which we conclude some interesting spectral properties of $(-\Delta)^a_\textup{N}$ in the next proposition. 
We point out that $0 \notin \sigma_\textup{p}((-\Delta)^a_\textup{N})$, but $(-\Delta)^a_\textup{N}$ has one negative eigenvalue. This is a significant difference to what is known for the Neumann realization of the classical Laplacian; cf. Remark~\ref{kannnichtschaden} below.  Recall that $\gamma^{(a)}$ and $M^{(a)}$ are the $\gamma$-field and Weyl function for the boundary triplet $\{ \mathbb{C}^2, \Upsilon_0^{(a)}, \Upsilon_1^{(a)} \}$, see Section~\ref{section_Weyl_function}.

\begin{proposition} \label{proposition_Neumann}
  The operator $(-\Delta)_\textup{N}^a$ is self-adjoint in $L^2(\alpha, \beta)$ and the following holds for all $\lambda\in\rho((-\Delta)^a_\textup{D})$.
  \begin{itemize}
\item [{\rm (i)}] $\sigma((-\Delta)^a_\textup{N})$ is purely discrete and 
\begin{equation} \label{BS_Neumann}
\lambda\in\sigma_{\rm p}((-\Delta)^a_{\cA, \cB}) \quad \text{if and only if} \quad 0\in\sigma_{\rm p}(M^{(a)}(\lambda) ).
\end{equation}
In particular, $0 \notin \sigma((-\Delta)^a_\textup{N})$ and $(-\Delta)_\textup{N}^a$ has exactly one simple negative eigenvalue.

\item [{\rm (ii)}] $\lambda \in\rho((-\Delta)^a_{\textup{N}})$ if and only if
$0\notin\sigma_\textup{p}(M^{(a)}(\lambda))$ and in this case
\begin{equation*}
\big((-\Delta)^a_{\textup{N}} - \lambda\big)^{-1}=\big((-\Delta)^a_{\textup{D}} - \lambda\big)^{-1}
-\gamma^{(a)}(\lambda)M^{(a)}(\lambda)^{-1}\gamma^{(a)}(\overline{\lambda})^*.
\end{equation*}
\end{itemize}
\end{proposition}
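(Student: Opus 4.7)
The Neumann realization corresponds in the boundary triplet framework to the choice $\mathcal{A} = I$, $\mathcal{B} = 0$, which satisfies Assumption~\ref{assumption_parameter} and produces the boundary condition $\Upsilon_1^{(a)} f = 0$; compare~\eqref{neumi}. Self-adjointness and purely discrete spectrum therefore follow directly from Proposition~\ref{proposition_self_adjoint}, while the Birman--Schwinger principle~\eqref{BS_Neumann} and the resolvent formula in~(ii) are precisely what Proposition~\ref{proposition_Kreinformula_0} yields upon substituting $\mathcal{A} = I$, $\mathcal{B} = 0$. To exclude $0$ from the spectrum I will compute $\det M^{(a)}(0)$ from~\eqref{equation_M_0}, obtaining $\frac{a^2(a^2 - 1)}{(\beta - \alpha)^2}$, which is nonzero since $a \in (\frac{1}{2}, 1)$. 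By Birman--Schwinger this excludes $0$ from $\sigma_\textup{p}$, and discreteness then places $0 \in \rho((-\Delta)^a_\textup{N})$.

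The heart of the proof is the count of negative eigenvalues. The strategy is to follow the ordered eigenvalues $\mu_-(\lambda) \leq \mu_+(\lambda)$ of the self-adjoint matrix $M^{(a)}(\lambda)$ as $\lambda$ ranges over $(-\infty, 0] \subset \rho((-\Delta)^a_\textup{D})$. By the min--max characterization both are continuous in $\lambda$, and the monotonicity of the Weyl function on real intervals inside $\rho((-\Delta)^a_\textup{D})$ (cf. Section~\ref{section_Weyl_function_abstract}) will make them strictly increasing. A direct diagonalization of~\eqref{equation_M_0} gives $\mu_-(0) = -\frac{a(1+a)}{\beta-\alpha} < 0$ and $\mu_+(0) = \frac{a(1-a)}{\beta-\alpha} > 0$. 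On the other hand, the proof of Proposition~\ref{proposition_Dirichlet_Friedrichs} has already established that $(M^{(a)}(\lambda))^{-1} \to 0$ as $\lambda \to -\infty$, so $\min\{|\mu_-(\lambda)|,|\mu_+(\lambda)|\} \to \infty$ and hence both $|\mu_\pm(\lambda)|$ diverge; combined with the upper bound $\mu_\pm(\lambda) \leq \mu_\pm(0) < \infty$ coming from monotonicity, this forces $\mu_\pm(\lambda) \to -\infty$.

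The intermediate value theorem applied to the continuous, strictly increasing function $\mu_+$ will then provide a unique $\lambda^* \in (-\infty, 0)$ with $\mu_+(\lambda^*) = 0$, while $\mu_-(\lambda) \leq \mu_-(0) < 0$ throughout $(-\infty, 0]$. Hence $0 \in \sigma_\textup{p}(M^{(a)}(\lambda))$ at exactly one point $\lambda^* \in (-\infty, 0]$, and since the two eigenvalues are distinct there, the kernel of $M^{(a)}(\lambda^*)$ is one-dimensional. The Birman--Schwinger identity $\ker((-\Delta)^a_\textup{N} - \lambda^*) = \gamma^{(a)}(\lambda^*) \ker M^{(a)}(\lambda^*)$, together with the injectivity of the $\gamma$-field, will then show that $\lambda^*$ is the unique negative eigenvalue of $(-\Delta)^a_\textup{N}$ and that it is simple. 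The most delicate step is fixing the sign in the limit $\lambda \to -\infty$: the mere vanishing of $(M^{(a)}(\lambda))^{-1}$ does not by itself determine whether the eigenvalues of $M^{(a)}(\lambda)$ diverge to $+\infty$ or $-\infty$, and it is precisely the combination with strict monotonicity in $\lambda$ and the finite boundary values $\mu_\pm(0)$ that pins them down on the negative axis and allows the counting argument to go through.
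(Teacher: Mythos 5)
Your proposal is correct and follows essentially the same route as the paper: self-adjointness and discreteness via Proposition~\ref{proposition_self_adjoint} with $\cA=I$, $\cB=0$, the Birman--Schwinger principle and resolvent formula from Proposition~\ref{proposition_Kreinformula_0}, exclusion of $0$ from the explicit form of $M^{(a)}(0)$, and the count of negative eigenvalues by tracking the ordered eigenvalues of $M^{(a)}(\lambda)$, which are monotone, finite at $\lambda=0$ with opposite signs, and tend to $-\infty$ as $\lambda\to-\infty$ by~\eqref{equation_M_infty}. Your explicit remark that the vanishing of $(M^{(a)}(\lambda))^{-1}$ alone only forces $|\mu_\pm(\lambda)|\to\infty$ and that the sign is pinned down by the monotone bound $\mu_\pm(\lambda)\leq\mu_\pm(0)$ is a welcome clarification of a step the paper states more tersely, but it is the same argument.
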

\begin{proof}
From \eqref{neumi} it is clear that $(-\Delta)^a_\textup{N}$ is of the form~\eqref{def_extension_fractional_Laplace} with $\cA = I$ and $\cB = 0$. Therefore, by Proposition~\ref{proposition_self_adjoint} the operator $(-\Delta)^a_\textup{N}$ is self-adjoint with purely discrete spectrum.
Moreover, the Birman-Schwinger principle in~\eqref{BS_Neumann} and the Krein-type resolvent formula in~(ii) follow from Proposition~\ref{proposition_Kreinformula_0}. 
Note that the eigenvalues of $M^{(a)}(0)$ are given by 
$$
\mu_1(0) = \frac{a}{\beta-\alpha} (-a - 1)\quad\text{and}\quad \mu_2(0) = \frac{a}{\beta-\alpha} (-a + 1).
$$
As $\mu_1(0)\not= 0$ and $\mu_2(0)\not= 0$ we conclude from \eqref{BS_Neumann} that $0 \notin \sigma((-\Delta)^a_\textup{N})$. 
  
  It remains to show that $(-\Delta)^a_\textup{N}$ has exactly one negative eigenvalue. Let $\kappa_0$ be the lower bound of $(-\Delta)^a_\textup{D} = (-\Delta)^a_\max \upharpoonright \ker \Upsilon_0^{(a)}$, recall that $\kappa_0>0$, and denote by $\mu_1(\lambda) \leq \mu_2(\lambda)$ the eigenvalues of $M^{(a)}(\lambda)$, $\lambda \in (-\infty, \kappa_0)$. By the analyticity and monotonicity of $(-\infty, \kappa_0) \ni  \lambda \mapsto M^{(a)}(\lambda)$ also the functions
  \begin{equation*}
    (-\infty, \kappa_0) \ni  \lambda \mapsto \mu_j(\lambda), \qquad j\in \{ 1, 2 \},
  \end{equation*}
  are continuous and non-decreasing. This and~\eqref{equation_M_infty} imply that
  \begin{equation*}
    \lim_{\lambda \rightarrow -\infty} \mu_j(\lambda) = -\infty, \qquad j \in \{ 1, 2 \}.
  \end{equation*}
  Since $\mu_1(0) < 0$ the function $\mu_1$ has no negative zero. On the other hand, as $\mu_2(0)  > 0$, the function $\mu_2$ has exactly one zero in $(-\infty,0)$, which by ~\eqref{BS_Neumann} is an eigenvalue of $(-\Delta)^a_\textup{N}$. Thus,  $(-\Delta)^a_\textup{N}$ has exactly one negative simple eigenvalue.
\end{proof}

\begin{remark}\label{kannnichtschaden}
As mentioned below Proposition~\ref{proposition_M_0} the $2\times 2$-matrix $M^{(a)}(0)$ coincides with $M(0)$ in \eqref{m0} in the limiting case $a=1$.
The eigenvalues of $M(0)$ are given by $0$ and $\frac{-2}{\beta-\alpha}$ and hence the proof of Proposition~\ref{proposition_Neumann} would imply that 
the usual Neumann Laplacian (that is, $A_1$ in \eqref{a1a1}) is nonnegative and has $0$ as a simple eigenvalue (which is in accordance with the 
well understood spectral properties of this operator).
\end{remark}

\noindent {\bf Acknowledgments.} 
We are indebted to Krzysztof Bogdan and Markus Kunze for fruitful discussions and helpful remarks.
D.M.\ is most grateful for a stimulating research stay at Graz University of Technology, where some parts of this paper were written in October 2024, and acknowledges financial support by the Deutsche Forschungsgemeinschaft (Grant 397230547).
This research was funded in part by the Austrian Science Fund (FWF) 10.55776/P 33568-N. For the purpose of open access, the author has applied a CC BY public copyright licence to any Author Accepted Manuscript version arising from this submission. 

%

\end{document}